\newtheorem{theorem}{Theorem}[section]
\newtheorem{lemma}[theorem]{Lemma}
\newtheorem{corollary}[theorem]{Corollary}
\newtheorem{proposition}[theorem]{Proposition}
\newtheorem{problem}[theorem]{Problem}
\title{Central polynomials with involution for the algebra of $2 \times 2$ upper triangular matrices}
\author{Ronald Ismael Quispe Urure \thanks{Supported by Ph.D. grant from CAPES}\\
Departamento de Matem\'atica, Universidade Federal de S\~ao Carlos\\
13565-905 S\~ao Carlos, SP, Brasil\\
e-mail: \texttt{urure6@gmail.com}\\ 
\\
\\
Dimas Jos\'e Gon\c{c}alves\thanks{Partially supported by FAPESP grant 
No. 2014/09310-5, and by CNPq grant No. 406401/2016-0}
\\
Departamento de Matem\'atica, Universidade Federal de S\~ao Carlos\\
13565-905 S\~ao Carlos, SP, Brasil\\
e-mail: \texttt{dimas@dm.ufscar.br}\\ 
\\
}
\begin{document}

\maketitle

\noindent\textbf{Keywords:} Involution, Upper triangular matrices, Identities with involution, 
Central polynomials with involution, PI-algebra.

\noindent\textbf{2010 AMS MSC Classification:} 16R10, 16R50, 16W10.

\

\begin{abstract}
Let $F$ be a field of characteristic different from $2$, and let
$UT_2(F)$ be the algebra   
of $2 \times 2$ upper triangular matrices over $F$. For every involution of the first kind 
on $UT_2(F)$, 
we describe the set of all  
$*$-central polynomials for this algebra.
\end{abstract}

\section{Introduction}
Let $F$ be a field. In this paper, every algebra is unitary associative over $F$.  
Let  $F\langle X \rangle$ be the free unitary associative algebra, freely generated 
over $F$ by the infinite set $X=\{x_1,x_2,\ldots \}$.

A polynomial $f(x_1,\ldots,x_n) \in F\langle X \rangle$ is a 
polynomial identity for an algebra $A$ if 
\[f(a_1,\ldots,a_n)=0\]
for all $a_1,\ldots,a_n \in A$. 
Denote by $Id(A)$ the set of all polynomial identities for $A$. 
It is known that $Id(A)$ is a T-ideal, that is, an ideal 
closed under all
endomorphisms of $F\langle X \rangle$.

If $S\subseteq F\langle X \rangle$, we denote by $\langle S \rangle^T$ the 
T-ideal generated by $S$, that is, the intersection of
all T-ideals containing $S$. 
Given a T-ideal $I$, if there exists a finite set $S$ such that $I=\langle S \rangle^T$,
we say that $I$ is finitely
generated as a T-ideal. 

In 1950, Specht \cite{specht} posed the following problem:

\begin{problem}[Specht's problem] \label{problemspecht}  Is $Id(A)$ finitely generated,  as a T-ideal, 
for every algebra $A$ ?
\end{problem}

\noindent The answers to this question are: 

a)  Yes, if char$(F)=0$. Kemer \cite{kemer}.

b) No, if char$(F)\neq 0$. Belov (\cite{belov}), Grishin (\cite{grishin}) and Shchigolev (\cite{Shchigolev}).

\vspace{0.15cm}   

In general, the description of $Id(A)$ is a hard problem. 
The algebra $UT_n(F)$ of $n\times n$ upper triangular matrices 
plays an important role in the theory of PI-algebras.
Maltsev \cite{maltsev} described $Id(UT_n(F))$ when char$(F)=0$, and 
Siderov \cite{siderov} when $F$ is any field. 
In particular, they proved that $Id(UT_n(F))$ is finitely generated, as a T-ideal.

A T-space is a vector subspace of $F\langle X \rangle$ closed under all
endomorphisms of $F\langle X \rangle$. 
Every T-ideal is a T-space. Another important T-space is the set of all central polynomials for an 
algebra $A$, denoted by $C(A)$.
A polynomial $f(x_1,\ldots,x_n) \in F\langle X \rangle$ is a 
central polynomial for an algebra $A$ if 
\[f(a_1,\ldots,a_n)\in Z(A) \ \mbox{(center  of  $A$)}\]
for all $a_1,\ldots,a_n \in A$.
Note that 
\[C(A) \supseteq Id(A)+F.\]
Thus, some authors don't include $Id(A)+F$ in the definition of $C(A)$. In this paper, if $f \in Id(A)+F$, we say that
$f$ is a trivial central polynomial for $A$. 

Let $M_n(F)$ be the $n\times n$ matrix algebra. 
It is known that  
\[[x_1,x_2]^2 \]
is a non-trivial central polynomial for $M_2(F)$. Here, $[x_1,x_2]=x_1x_2-x_2x_1$ 
is the commutator of $x_1$ and $x_2$. 
In 1956, Kaplansky \cite{kaplansky} posed the following problem:

\begin{problem}
Does there exist a non-trivial central polynomial for $M_n(F)$ for all $n\geq 3$ ?  
\end{problem}

Formanek (\cite{formanek}) and Razmyslov (\cite{razmyslov}) answer ``yes'' to the question, and this was very 
important for ring theory.
Let $\tau(n)$ be the minimal degree of the non-trivial central polynomial for $M_n(F)$ when char$(F)=0$. 
We known that $\tau(1)=1$ and $\tau(2)=4$. Drensky and Kasparian \cite{drenskykasparia1,
drenskykasparia2} proved that  $\tau(3)=8$. We don't known $\tau(n)$ when
$n\geq 4$. It is an open problem.

If $S\subseteq F\langle X \rangle$, we denote by $\langle S \rangle^{TS}$ the 
T-space generated by $S$, that is, the intersection of
all T-spaces containing $S$. 
Given a T-space $I$, if there exists a finite set $S$ such that $I=\langle S \rangle^{TS}$,
we say that $I$ is finitely
generated as a T-space. Shchigolev \cite{shchigolev2} proved the following theorem:

\begin{theorem}
If $char(F)=0$ then every T-space is finitely generated. 
\end{theorem}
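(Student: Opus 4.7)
The plan is to extend Kemer's approach to Specht's problem from T-ideals to T-spaces. Since $\mathrm{char}(F)=0$, I would first reduce to multilinear polynomials: any T-space $V$ is determined by its multilinear components $P_n(V)=V\cap P_n$, where $P_n$ is the space of multilinear polynomials in $x_1,\ldots,x_n$, and each $P_n(V)$ is a left $FS_n$-module under variable permutation. To show $V$ is finitely generated as a T-space it suffices to show that the sequence $\{P_n(V)\}_n$ is generated, via T-space operations, by a bounded family.

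Next, I would encode the asymptotic behaviour of $V$ through its cocharacter sequence $\chi_n(V)$, decomposing each $P_n(V)$ as a sum of irreducible $S_n$-modules indexed by partitions $\lambda\vdash n$. If $V$ is not contained in any proper T-ideal, then $V$ has finite codimension in $F\langle X\rangle$ and finite generation is immediate. Otherwise $V\subseteq \mathrm{Id}(A)$ for some PI-algebra $A$, and by Regev's theory the partitions $\lambda$ occurring in $\chi_n(V)$ lie in a hook of bounded width; by Kemer's theorem one may further assume $A$ is PI-equivalent to the Grassmann envelope of a finite-dimensional superalgebra, which gives strong structural control on the shapes of the admissible Young diagrams.

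I would then work inside the relative free algebra $F\langle X\rangle/\mathrm{Id}(A)$ and argue by stabilization on Young diagrams. Using substitution tricks such as $x_i\mapsto x_i+x_{n+1}$ followed by extraction of multilinear components, one can relate $P_n(V)$ to $P_{n+1}(V)$ as $S_n$- and $S_{n+1}$-modules, and so compare cocharacters at consecutive levels. The goal is to prove that, once $n$ is large enough, every new irreducible component appearing in $P_n(V)$ is a T-space consequence of finitely many fixed polynomials chosen at the earlier stages; combining this with the finitely many low-degree generators handles all of $V$.

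The main obstacle is the weakness of T-space closure compared with T-ideal closure: a T-ideal is closed under two-sided multiplication by $F\langle X\rangle$, which naturally propagates generators from $P_n$ to $P_{n+1}$, whereas a T-space is only closed under endomorphisms, so multiplication by fresh variables is not directly available. The heart of the proof is therefore devising substitution operations that, at the level of $S_n$-modules, recover enough of the ascending behaviour used by Kemer, and proving that the induced lifting maps between successive multilinear layers stabilize on all but finitely many Young diagrams.
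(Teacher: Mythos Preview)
The paper does not prove this theorem at all; it simply quotes it as a result of Shchigolev and cites \cite{shchigolev2}. So there is no proof in the paper to compare your proposal against beyond that reference.

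Regarding the proposal itself, the general framework (multilinearisation in characteristic~$0$, $S_n$-module structure on $P_n\cap V$, stabilisation of Young diagrams) is in the right spirit, but there are two concrete problems that make this a plan rather than a proof.

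First, the case distinction is faulty in both branches. The claim ``if $V$ is not contained in any proper T-ideal then $V$ has finite codimension in $F\langle X\rangle$'' already fails for $V=F\cdot 1$: this T-space contains $1$, hence sits in no proper T-ideal, yet its codimension is infinite. More seriously, in the other branch the inclusion $V\subseteq \mathrm{Id}(A)$ points the wrong way for the hook conclusion you draw. Hook-shape restrictions from Amitsur--Regev and Kemer apply to the cocharacter of $P_n/(P_n\cap \mathrm{Id}(A))$, i.e.\ to the \emph{non}-identities; the identities module $P_n\cap \mathrm{Id}(A)$ has dimension $n!-c_n(A)$ and contains essentially all irreducibles outside the hook. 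So from $V\subseteq \mathrm{Id}(A)$ you cannot infer that the partitions occurring in $P_n\cap V$ lie in a bounded hook, and the structural control you claim from Kemer's theorem is not available in this direction.

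Second, the obstacle you name in the last paragraph is precisely the content of the theorem, and it is not addressed. For T-ideals, passing from $P_n$ to $P_{n+1}$ is done by multiplying by a fresh variable; T-spaces forbid this. The substitution $x_i\mapsto x_i+x_{n+1}$ followed by extraction of a homogeneous component does not raise the total degree of a multilinear polynomial, so it does not produce a lifting $P_n\cap V\to P_{n+1}\cap V$; it only moves information around within $V$ at the same degree. Your proposal states that ``the heart of the proof'' is to devise substitution operations that recover enough of the ascending behaviour, but no such operation is exhibited and no stabilisation is actually proved. That missing step is exactly the substantial work in Shchigolev's argument, so as it stands the proposal is an outline of what one would like to do rather than a proof.
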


If $F$ is an infinite field of char$(F) > 2$, we have an important example of non-finitely generated T-space: it is $C(G)$, where $G$ is the 
infinite dimensional Grassmann algebra. See \cite{bekhocirrankin, brandaoplkrel}. 

It is well known that
\begin{equation} \label{cutnfigualidutnfmaisf}
C(UT_n(F))=Id(UT_n(F))+F
\end{equation}
for all $n\geq 2$. 
See   \cite[Exercise 1.4.2]{rowen} and  \cite[Example 3.2]{drenskyformanek}. 
For the algebra $M_2(F)$, the T-space $C(M_2(F))$ was described when $F$ is an infinite field of char$(F)\neq 2$. 
See \cite{colombo, okhitin}.

From now on, $F$ will be a field of char$(F) \neq 2$. Furthermore, we will consider algebras with involution 
of the first kind only.

Let $X=\{x_1,x_2,\ldots\}$, $X^*=\{x_1^*,x_2^*,\ldots\}$
be two disjoint infinite sets. Denote by $F\langle X \cup X^* \rangle$
the free unitary associative algebra, freely generated by $X \cup X^*$. 
Let $A$ be an algebra with involution $\circledast$.
A polynomial $f(x_1,x_1^*, \ldots,x_n,x_n^*) \in F\langle X \cup X^* \rangle$ is a polynomial identity with
involution (or $*$-polynomial identity) for $(A,\circledast)$ if 
\[f(a_1,a_1^{\circledast}, \ldots,a_n,a_n^{\circledast})=0\]
for all $a_1,\ldots,a_n \in A$. 
Denote by $Id(A, \circledast)$ the set of all $*$-polynomial identities for $(A, \circledast)$. 
It is known that $Id(A,\circledast)$ is a $T(*)$-ideal, that is, an $*$-ideal of $F\langle X\cup X^* \rangle$
closed under 
all $*$-endomorphisms of $F\langle X\cup X^* \rangle$. 

If $S\subseteq F\langle X\cup X^* \rangle$, we denote by $\langle S \rangle^{T(*)}$ the 
$T(*)$-ideal generated by $S$, that is, the intersection of
all $T(*)$-ideals containing $S$. 
Given a $T(*)$-ideal $I$, if there exists a finite set $S$ such that $I=\langle S \rangle^{T(*)}$,
we say that $I$ is finitely
generated as a $T(*)$-ideal. 

Recently, Aljadeff, Giambruno, Karasik (\cite{Aljgiakar})  and Sviridova (\cite{sviridova}) 
proved the following:

\begin{theorem}
 Let $F$ be a field of $char(F)=0$. If $A$ is an 
 algebra with involution $\circledast$, then
 $Id(A, \circledast)$ is finitely generated as a $T(*)$-ideal
\end{theorem}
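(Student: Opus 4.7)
The plan is to adapt Kemer's resolution of the ordinary Specht problem to the $*$-setting by translating $*$-identities into $\mathbb{Z}_2$-graded identities. Since $\mathrm{char}(F)\neq 2$, the involution $\circledast$ yields the decomposition $A=A^+\oplus A^-$ into symmetric and skew-symmetric elements, which is a $\mathbb{Z}_2$-grading. The linear change of variables $y_i=(x_i+x_i^*)/2$ and $z_i=(x_i-x_i^*)/2$ induces an isomorphism of free algebras that converts $*$-substitutions into grading-preserving substitutions. Under it, $T(*)$-ideals of $F\langle X\cup X^*\rangle$ correspond bijectively to $T_2$-ideals (that is, $\mathbb{Z}_2$-graded T-ideals) of the free $\mathbb{Z}_2$-graded algebra, and finite generation is preserved. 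The theorem is therefore equivalent to the $\mathbb{Z}_2$-graded Specht problem.

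To handle the graded version, I would first establish a graded analogue of Kemer's reduction to superalgebras: in characteristic zero, every $\mathbb{Z}_2$-graded variety is generated by the Grassmann envelope $E(B)=(B_{0}\otimes E_{0})\oplus(B_{1}\otimes E_{1})$ of some finite-dimensional $\mathbb{Z}_2\times\mathbb{Z}_2$-graded algebra $B$, where one $\mathbb{Z}_2$ matches the Grassmann sign and the other carries the original grading. This transports the problem to multilinear graded identities of finite-dimensional algebras, where a graded Wedderburn decomposition and standard representability results give a handle on the structure.

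Then I would run a Kemer-style noetherian induction on a graded Kemer index — a numerical invariant of the $T_2$-ideal recording the maximal alternation patterns on disjoint sets of variables in each homogeneous component, together with the dimension of the graded-semisimple part. The key claim is that any $T_2$-ideal strictly enlarging $Id_{2}(E(B))$ has strictly smaller graded Kemer index; this lets one peel off, step by step, finitely many graded Kemer polynomials whose $T_2$-closures together generate the ideal. Pulling the finite generating set back through the $*$-to-graded dictionary produces a finite set of generators for $Id(A,\circledast)$ as a $T(*)$-ideal.

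The hard part will be the combinatorial core of the Kemer machinery in the graded setting: selecting the correct graded Kemer polynomials and proving the ``phoenix'' property that alternation on sufficiently large disjoint sets of homogeneous variables survives the substitutions used in the induction. This is already delicate in the ungraded case, and the graded bookkeeping essentially doubles the combinatorial overhead because each alternating set has a prescribed parity. A prerequisite obstacle is the independent classification of the $T_2$-prime (equivalently, $*$-simple) building blocks, playing the role of $M_n(F)$, $M_n(E)$, $M_{k,l}(E)$ in Kemer's original list; one must secure this classification and verify that the Grassmann envelope construction behaves correctly on these pieces before the inductive descent on the Kemer index can even start.
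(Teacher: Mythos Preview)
The paper does not prove this theorem; it merely quotes it as a result of Aljadeff--Giambruno--Karasik and of Sviridova, and then moves on to its actual subject, the $*$-central polynomials of $UT_2(F)$. So there is no ``paper's own proof'' to compare against.

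That said, your reduction contains a genuine error. The decomposition $A=A^{+}\oplus A^{-}$ is \emph{not} a $\mathbb{Z}_2$-grading: if $a,b\in A^{+}$ then $(ab)^{\circledast}=b^{\circledast}a^{\circledast}=ba$, which need not equal $ab$, so $A^{+}A^{+}\not\subseteq A^{+}$ in general. The same failure shows up at the free-algebra level. In $F\langle Y\cup Z\rangle$ the element $y_1y_2$ has $\mathbb{Z}_2$-degree $0$ but is not symmetric, while $y_1z_1+z_1y_1$ is symmetric but has $\mathbb{Z}_2$-degree $1$. Hence ``$*$-substitutions'' (sending $y_i$ to symmetric elements and $z_i$ to skew-symmetric elements) are \emph{not} the same as graded substitutions (sending $y_i$ to even elements and $z_i$ to odd elements), and $T(*)$-ideals do not coincide with $T_2$-ideals. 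Your claimed equivalence with the $\mathbb{Z}_2$-graded Specht problem therefore collapses at the very first step.

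Your broader instinct---Kemer-type structure theory, Grassmann envelopes, a suitable index and noetherian induction---is indeed what the cited papers do, but the correct dictionary passes through \emph{superinvolutions} (involutions on superalgebras with a sign-twisted antimultiplicativity) rather than through ordinary $\mathbb{Z}_2$-gradings. One shows that the $*$-identities of $(A,\circledast)$ coincide with the $*$-identities of the Grassmann envelope of a finite-dimensional superalgebra with superinvolution, and then runs the Kemer machinery in that category. The extra twist is unavoidable precisely because the symmetric/skew decomposition fails to be multiplicative.
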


Di Vincenzo, Koshlukov, La Scala \cite{vinkossca} described the involutions of the first kind on $UT_n(F)$. 
They proved that
there exist two classes of inequivalent involutions when $n$
is even and a single class otherwise. They also described: 

a) $Id(UT_2(F),\circledast)$ when $F$ is infinite,

b) $Id(UT_3(F),\circledast)$ when char$(F)=0$,

\noindent for all involutions of the first kind on $UT_2(F)$ and $UT_3(F)$ respectively. 
Urure and Gon\c{c}alves
\cite{ronalddimas}
described $Id(UT_2(F),\circledast)$ when $F$ is finite. 
In particular, $Id(UT_2(F),\circledast)$ is finitely generated as a $T(*)$-ideal (see \cite{ronalddimas,vinkossca}).
It is an open problem to describe $Id(UT_n(F),\circledast)$ in other cases.

Now, a $T(*)$-space is a vector subspace of $F\langle X \cup X^* \rangle$ closed under all
$*$-endomorphisms of $F\langle X\cup X^* \rangle$. 
Every $T(*)$-ideal is a $T(*)$-space. Another important $T(*)$-space is the set of all $*$-central polynomials for an 
algebra with involution $(A,\circledast)$, denoted by $C(A,\circledast)$.
A polynomial $f(x_1,x_1^*, \ldots,x_n,x_n^*) \in F\langle X \cup X^* \rangle$ is a central polynomial with
involution (or $*$-central polynomial) for $(A,\circledast)$ if 
\[f(a_1,a_1^{\circledast}, \ldots,a_n,a_n^{\circledast}) \in Z(A)\]
for all $a_1,\ldots,a_n \in A$.

If $W\subseteq F\langle X\cup X^* \rangle$, we denote by $\langle W \rangle^{TS(*)}$ the 
$T(*)$-space generated by $W$, that is, the intersection of
all $T(*)$-spaces containing $W$. 
Given a $T(*)$-space $I$, if there exists a finite set $W$ such that $I=\langle W \rangle^{TS(*)}$,
we say that $I$ is finitely
generated as a $T(*)$-space.

If $F$ is an infinite field, Brandão and Koshlukov \cite{brandaoplamen} decribed $C(M_2(F),\circledast)$
for every involution $\circledast$ on $M_2(F)$. Silva \cite{diogo} studied $C(M_{1,1}(G),\circledast)$.

In this paper, we describe $C(UT_2(F),\circledast)$ for every
involution of the first kind $\circledast$ and every field $F$ (finite or infinite) with char$(F) \neq 2$. In particular, we prove
that 
\[C(UT_2(F),\circledast)\neq Id(UT_2(F),\circledast)+F.\]
Compare this information with (\ref{cutnfigualidutnfmaisf}). 
Moreover, we prove the following theorem:

\begin{theorem}\label{teoremafinitogeradoresdeute}
Let $F$ be a field of $char(F) \neq 2$. If $\circledast$ is an involution of the first kind on $UT_2(F)$, then  
$C(UT_2(F),\circledast)$ is finitely generated as a $T(*)$-space.
\end{theorem}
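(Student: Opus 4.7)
The plan is to exploit the classification of involutions of the first kind on $UT_2(F)$ given by Di Vincenzo, Koshlukov, and La Scala: up to equivalence there are exactly two such involutions (call them $\circledast_1$ and $\circledast_2$). I would treat each case separately and produce, in each case, an explicit finite list of non-trivial $*$-central polynomials $f_1,\dots,f_k$ such that
\[
C(UT_2(F),\circledast) \;=\; \bigl(Id(UT_2(F),\circledast)+F\bigr)\;+\;\langle f_1,\dots,f_k\rangle^{TS(*)}.
\]
This would immediately imply finite generation as a $T(*)$-space, because $Id(UT_2(F),\circledast)$ is already known to be finitely generated as a $T(*)$-ideal (by \cite{vinkossca} for $F$ infinite and \cite{ronalddimas} for $F$ finite), and hence so is $Id(UT_2(F),\circledast)+F$ as a $T(*)$-space.

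To manufacture the candidate polynomials, I would use that $Z(UT_2(F))=F\cdot I$, so a $*$-polynomial is central precisely when its evaluations are scalar matrices. A small computation on symmetric and skew-symmetric elements for each involution points to natural candidates. For the reflection-type involution the skew elements have the form $a(E_{11}-E_{22})$, so $(x-x^*)^2$ is already a non-trivial $*$-central polynomial; for the other involution an analogous analysis of the $(+1)$- and $(-1)$-eigenspaces of $\circledast$ yields the corresponding list. These candidates are to be tested on matrix units to confirm centrality and non-triviality.

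The core step is to show that every $*$-central polynomial lies in $\langle f_1,\dots,f_k\rangle^{TS(*)}+Id(UT_2(F),\circledast)+F$. For this, I would work in the relatively free algebra $F\langle X\cup X^*\rangle/Id(UT_2(F),\circledast)$, for which an explicit linear basis (by multihomogeneous components) is known from the description of the $*$-identities. Given $f\in C(UT_2(F),\circledast)$, reduce $f$ modulo $Id(UT_2(F),\circledast)$ and decompose into multihomogeneous components; each component remains $*$-central (since the center is a graded subspace). Substituting generic symmetric/skew generic elements and equating the four matrix entries to the ``scalar-matrix'' constraints $(1,1)$-entry $=$ $(2,2)$-entry and $(1,2)$-entry $=0$ yields explicit linear conditions on the coefficients. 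One then checks that these conditions are exactly those defining the linear span, in the relatively free algebra, of $*$-endomorphic images of the proposed $f_i$'s together with $F$. When $F$ is finite, one further respects the extra identities coming from Frobenius-type evaluations appearing in \cite{ronalddimas}.

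The main obstacle I expect is this last matching step: identifying the scalar-valued elements inside the relatively free algebra and proving that the $T(*)$-space they span is generated by the concrete short list $f_1,\dots,f_k$. This is essentially a bookkeeping task on multihomogeneous components, but subtle because the list must be complete in \emph{every} multidegree, including mixed degrees in the symmetric and skew variables, and because the finite-field case adds substitution-type generators that have no analogue over infinite fields. Once this matching is carried out in both involution cases, the theorem follows by combining the finite generation of $Id(UT_2(F),\circledast)$ with the explicit finite list of central generators.
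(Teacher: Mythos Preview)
Your plan is essentially the paper's proof: reduce via the Di~Vincenzo--Koshlukov--La~Scala classification to the two involutions $\star$ and $s$, exhibit in each case a finite list of $*$-central generators (in the paper: $z_1z_2$, together with $y_1^p$ when $\mathrm{char}(F)=p>0$ is infinite, or the polynomials $l\,y_1(y_2^{q+l-1}-y_2^l)+y_1^q y_2^l$ for $1\le l\le p$ when $|F|=q$; and simply $y_1$ for $s$), verify via the known basis of the relatively free algebra that these together with $Id(UT_2(F),\circledast)$ span $C(UT_2(F),\circledast)$, and finally observe that a finitely generated $T(*)$-ideal is also a finitely generated $T(*)$-space. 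One caution: your justification that multihomogeneous components of a $*$-central polynomial remain $*$-central ``since the center is a graded subspace'' is not the correct reason---this holds over infinite fields because $C$ is a $T(*)$-space and one uses a Vandermonde argument, and it genuinely fails over finite fields, which is exactly why the finite-field case needs the more elaborate generators you anticipate rather than mere bookkeeping.
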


\section{Involution}
From now on $F$ will be a field of char$(F)\neq 2$. Let $A$ be an unitary associative algebra over $F$.
A map $*: A \rightarrow A$ is an involution on $A$ if

a) $(a+b)^*=a^*+b^*$ for all $a,b \in A$,

b) $(ab)^*=b^*a^*$ for all $a,b \in A$,

c) $(a^*)^*=a$ for all $a \in A$.

\noindent Let $Z(A)$ be the center of $A$. If $a^*=a$ for all $a\in Z(A)$, then $*$
is called an involution of the first kind on $A$. Otherwise $*$ is called an involution of the second kind on $A$.
From now on we consider involutions of the first kind only. In this case $(\lambda a)^*=\lambda (a^*)$
for all $\lambda \in F$, $a\in A$. An element $a\in A$ is said to be symmetric if $a^*=a$. It's skew-symmetric if 
$a^*=-a$. Denote by $A^{+}$ and $A^{-}$ the following vector spaces: $A^{+}=\{a\in A: \ a^*=a\}$ and $A^{-}=\{a\in A: \ a^*=-a\}$. 
If $a\in A$ then 
\[a=(1/2)(a+a^*)+(1/2)(a-a^*).\]
Therefore
$A=A^{+} \oplus A^{-}$ as a vector space.

Let $(A,*)$ and $(B, \circ)$ be algebras with involutions $*$ and $\circ$ respectively.
We say that they are
isomorphic as algebras with involution if there exists an algebra isomorphism $\varphi: A \rightarrow B$ such that 
$\varphi (a^*)=(\varphi(a))^{\circ}$ for all $a\in A$. In this case we denote 
$(A,*) \simeq (B, \circ)$.

Denote by $\star$ and $s$ the following involutions on $UT_2(F)$:
\begin{equation}
  \left(
 \begin{array}{cc}
  a&c \\
  0&b
 \end{array}
\right)^{\star}=\left(
 \begin{array}{cc}
  b&c \\
  0&a
 \end{array}
\right)  \ \ \ \mbox{and} \ \ \ 
\left(
 \begin{array}{cc}
  a&c \\
  0&b
 \end{array}
\right)^{s}=\left(
 \begin{array}{cc}
  b&-c \\
  0&a
 \end{array}
\right)
\end{equation}
for all $a,b,c \in F$. By \cite[Propositions 2.5 and 2.6]{vinkossca} we have the next corollary: 

\begin{corollary}\label{corolarioequivalenciainvolucoe}
If $*$ is an involution of the first kind on $UT_2(F)$, then 
\[(UT_2(F),*)\simeq (UT_2(F),\star) \ \  or \ \ (UT_2(F),*)\simeq (UT_2(F),s).\]
Moreover, $(UT_2(F),\star)$ and $(UT_2(F),s)$ are not isomorphic as algebras with involution.
\end{corollary}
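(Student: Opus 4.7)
The plan is to classify involutions $*$ of the first kind on $UT_2(F)$ by looking at how $*$ acts on the canonical basis $\{e_{11},e_{12},e_{22}\}$, exploiting the fact that any involution preserves both the identity and the Jacobson radical. Since $(Fe_{12})^* = Fe_{12}$, I can write $e_{12}^* = \lambda e_{12}$ with $\lambda \in F$, and $(e_{12}^*)^* = e_{12}$ forces $\lambda^2 = 1$, i.e.\ $\lambda = \pm 1$. Next, $e_{11}^*$ is an idempotent distinct from $0,1$ (because $(e_{11}^*)^* = e_{11}\neq 0,1$), and using $e_{12} = e_{11}e_{12} = e_{12}e_{22}$ together with $*$ gives the conditions $e_{11}^* e_{12}^* = e_{12}^* = e_{12}^* e_{22}^*$. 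Combined with the idempotent equation $p^2 = p$ for $p = e_{11}^*$, a short calculation rules out $p = I$ and forces $p = e_{22} + \beta e_{12}$ for some $\beta \in F$, hence $e_{22}^* = I-p = e_{11}-\beta e_{12}$. Re-applying $*$ to $p$ and using $e_{12}^* = \lambda e_{12}$ yields $\beta(\lambda - 1) = 0$.

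This leaves exactly two cases. If $\lambda = -1$, then $\beta = 0$, so $e_{11}^* = e_{22}$, $e_{22}^* = e_{11}$, $e_{12}^* = -e_{12}$; this is literally the involution $s$. If $\lambda = 1$, any $\beta$ is allowed, and I must exhibit an isomorphism of algebras with involution from $(UT_2(F),*)$ to $(UT_2(F),\star)$. I would look for an inner automorphism $\varphi(a) = gag^{-1}$ with $g = I + \delta e_{12}$, because such an automorphism fixes $e_{12}$ while shifting $\varphi(e_{ii}) = e_{ii} \mp \delta e_{12}$. Imposing $\varphi(e_{11}^*) = \varphi(e_{11})^{\star}$ produces the equation $2\delta + \beta = 0$, which is solvable since $\mathrm{char}(F) \neq 2$; taking $\delta = -\beta/2$ then gives the desired equivalence (the identities on $e_{22}$ and $e_{12}$ follow automatically).

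For the final assertion that $(UT_2(F),\star)$ and $(UT_2(F),s)$ are not isomorphic as algebras with involution, I would compare the symmetric and skew subspaces. Under $\star$ the symmetric part is $\{\alpha I + \gamma e_{12}\}$ of dimension $2$ and the skew part $\{\alpha(e_{11}-e_{22})\}$ of dimension $1$, while under $s$ the dimensions are reversed ($F\cdot I$ symmetric and $\{\alpha(e_{11}-e_{22}) + \gamma e_{12}\}$ skew). Since any algebra isomorphism intertwining the two involutions would preserve these dimensions, no such isomorphism exists; alternatively, $\star$ admits a nonzero nilpotent symmetric element ($e_{12}$) whereas every nonzero symmetric element for $s$ is a unit.

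The main obstacle is the case $\lambda = 1$: one has to guess the right conjugating element $g$ and verify compatibility on all basis elements. Everything else is either forced by elementary idempotent/nilpotent considerations or follows from a dimension count, and the characteristic hypothesis $\mathrm{char}(F)\neq 2$ enters exactly once, precisely where it is needed to solve $2\delta = -\beta$.
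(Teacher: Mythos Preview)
Your argument is sound and self-contained. The paper does not actually prove this corollary; it simply invokes \cite[Propositions~2.5 and~2.6]{vinkossca}, so what you have written is an independent elementary proof replacing a bare citation. Your route---use invariance of the radical $Fe_{12}$ under $*$ to get $e_{12}^*=\pm e_{12}$, classify the idempotent $e_{11}^*$, and in the $\lambda=1$ case conjugate by the unipotent $g=I+\delta e_{12}$ with $\delta=-\beta/2$---is the natural direct approach, and the comparison of $\dim A^{+}$ under $\star$ versus $s$ cleanly settles the non-isomorphism.

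One small slip to correct: applying $*$ to $e_{12}=e_{11}e_{12}=e_{12}e_{22}$ reverses the order of the factors, so the resulting identities are $e_{12}^*=e_{12}^*e_{11}^*$ and $e_{12}^*=e_{22}^*e_{12}^*$, not $e_{11}^*e_{12}^*=e_{12}^*=e_{12}^*e_{22}^*$ as you wrote. With the corrected relations you still obtain $e_{11}^*=e_{22}+\beta e_{12}$ (the conclusion you stated), so nothing downstream changes.
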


\section{$*$-polynomial identities and $*$-central polynomials}

Let $X=\{x_1,x_2,\ldots\}$ and $X^*=\{x_1^*,x_2^*,\ldots\}$
be two disjoint infinite sets. Denote by $F\langle X \cup X^* \rangle$
the free unitary associative algebra freely generated by $X \cup X^*$ over $F$. 
This algebra has an
involution $*:F\langle X \cup X^* \rangle \rightarrow F\langle X \cup X^* \rangle$ induced by the map 
$X\cup X^* \rightarrow  X \cup X^* $ defined by $x_i \rightarrow x_i^*$ and $x_i^* \rightarrow x_i$. For
example
\[(x_1x_2x_3^*+2x_1^*x_4)^*=x_3x_2^*x_1^*+2x_4^*x_1.\]
Let $F\langle Y \cup Z \rangle$ be the
free unitary associative algebra
freely generated by $Y\cup Z$ over $F$, where
\[Y=\{y_1,y_2,\ldots \}, \ Z=\{z_1,z_2,\ldots \}, \ y_i=x_i+x_i^*, \ z_i=x_i-x_i^*\]
for all $i\geq 1$. Note that $F\langle X \cup X^* \rangle=F\langle Y \cup Z \rangle$, $y_i$ is symmetric and 
$z_i$ is skew-symmetric. Thus $f(y_1,\ldots,y_n,z_1,\ldots,z_m)$ is a $*$-polynomial identity for 
an algebra with involution $(A,\circledast)$ if 
\[f(a_1,\ldots,a_n,b_1,\ldots,b_m)=0\]
for all $a_1,\ldots,a_n \in A^{+}$ and $b_1,\ldots,b_m\in A^{-}$. Denote by $Id(A,\circledast)$ 
the set of all $*$-polynomial
identities for $(A,\circledast)$.
This set is a $T(*)$-ideal that is an ideal invariant under all
$*$-endomorphisms of $F\langle X \cup X^* \rangle$. Here $*$-endomorphism means an endomorphism 
$\varphi$ of the algebra $F\langle X \cup X^* \rangle$ such that
\[\varphi (f^*)=(\varphi (f))^*\]
for all $f\in F\langle X \cup X^* \rangle$. 
In particular, if $f(y_1,\ldots,y_n,z_1,\ldots,z_m) \in Id(A,\circledast)$ then
\[f(g_1,\ldots,g_n,h_1,\ldots,h_m) \in Id(A,\circledast)\]
for all $g_1,\ldots,g_n \in F\langle Y \cup Z \rangle^{+}$ and $h_1,\ldots,h_m \in F\langle Y \cup Z \rangle^{-}$.

We denote by $\langle W \rangle^{T(*)}$ the $T(*)$-ideal of $F\langle Y \cup Z \rangle$ generated by $W$ that is
the smallest $T(*)$-ideal of $F\langle Y \cup Z \rangle$ containing $W$.

If $A$ is an algebra we denote the commutators as follows: 
\[[a_1,a_2]=a_1a_2-a_2a_1 \ \ \mbox{and} \ \ [a_1,\ldots,a_{n-1},a_n]=[[a_1,\ldots,a_{n-1}],a_n]\]
for all $a_1,\ldots,a_n \in A$, $n\geq 2$. A polynomial $f(y_1,\ldots,y_n,z_1,\ldots,z_m) \in F\langle Y\cup Z \rangle$ is 
called $Y$-proper if $f$ is a linear combination of polynomials
\[z_1^{r_1}\cdots z_m^{r_m}f_1 \cdots f_t\]
where $t \geq 0$, $r_1,\ldots,r_m \geq 0$ and $f_i \in F\langle Y \cup Z \rangle$ is a commutator of lenght $\geq 2$ for all $i=1,\ldots,t$.
Denote by $B$ the vector space of all $Y$-proper polynomials.
By the Poincar\'e-Birkhoff-Witt theorem every element 
$g(y_1,\ldots,y_n,z_1,\ldots,z_m) \in F\langle Y \cup Z \rangle$ is a linear combination of polynomials 
\begin{equation}\label{teoremapbw}
y_1^{s_1}\cdots y_n^{s_n}g_{(s_1,\ldots,s_n)}
\end{equation}
where $s_1,\ldots,s_n \geq 0$ and $g_{(s_1,\ldots,s_n)}\in B$.

Using \cite[Lemma 2.1]{drenskygiambruno} and similar arguments as in \cite[Proposition 4.3.11]{drenskybook} 
we state the following:

\begin{proposition}\label{baseparapropiosbaseparatudo}
Let $F$ be an infinite field of char$(F)\neq 2$. Let $I$ be a $T(*)$-ideal of $F\langle Y\cup Z \rangle$.
Consider $W \subset B$ such that
\[\{w+(B\cap I) \ : \ w\in W\}\]
is a basis for the quotient vector space $B/(B\cap I)$. Then the set of all polynomials 
\[
y_1^{s_1}\cdots y_n^{s_n}w+I,
\]
where $s_1,\ldots,s_n \geq 0,$ \ $n\geq 1$ and $w\in W$, is a basis for the quotient vector space $F\langle Y\cup Z \rangle/I$.
\end{proposition}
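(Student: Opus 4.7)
The plan is to verify spanning and linear independence of the proposed family modulo $I$. Spanning is immediate from (\ref{teoremapbw}): given $g\in F\langle Y\cup Z\rangle$, write $g=\sum y_1^{s_1}\cdots y_n^{s_n}g_{(s_1,\ldots,s_n)}$ with $g_{(s_1,\ldots,s_n)}\in B$, then replace each $g_{(s_1,\ldots,s_n)}$ by its representative in the $F$-span of $W$ modulo $B\cap I$; since $I$ is a two-sided ideal, $y_1^{s_1}\cdots y_n^{s_n}(B\cap I)\subseteq I$, and $g$ becomes congruent modulo $I$ to a finite $F$-combination of the claimed basis elements.

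For linear independence, suppose
\[
f=\sum\alpha_{(s_1,\ldots,s_n),w}\,y_1^{s_1}\cdots y_n^{s_n}\,w\in I.
\]
The first reduction is to a single multi-degree. Because $\mathrm{char}(F)\neq 2$, the assignment $x_i\mapsto\lambda x_i+\mu x_i^{*}$ extends to a $*$-endomorphism acting as $y_i\mapsto(\lambda+\mu)y_i$ and $z_i\mapsto(\lambda-\mu)z_i$; letting $(\lambda,\mu)$ vary yields independent scalings of $y_i$ and $z_i$. A Vandermonde argument over the infinite field $F$ then shows that each multi-homogeneous component of $f$ lies in $I$, and since $B\cap I$ is likewise multi-homogeneous I may assume that both $W$ and $f$ consist of multi-homogeneous elements, with $f$ of some multi-degree $(d_1^y,\ldots,d_n^y;d_1^z,\ldots)$. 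By uniqueness in the PBW decomposition (\ref{teoremapbw}), regrouping by the leading $y$-monomial gives $f=\sum y_1^{s_1}\cdots y_n^{s_n}g_{(s_1,\ldots,s_n)}$ with $g_{(s_1,\ldots,s_n)}\in B$ equal to the sum of $\alpha_{(s_1,\ldots,s_n),w}w$ over those $w\in W$ of $y$-multidegree $(d_1^y-s_1,\ldots,d_n^y-s_n)$; only finitely many tuples $(s_1,\ldots,s_n)$ with $0\leq s_i\leq d_i^y$ contribute.

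The heart of the argument is to show that each $g_{(s_1,\ldots,s_n)}$ belongs to $I$. For this I use the $*$-endomorphism induced by $x_i\mapsto x_i+\alpha_i/2$, which acts as $y_i\mapsto y_i+\alpha_i$ and $z_i\mapsto z_i$ for arbitrary scalars $\alpha_i\in F$. The crucial point is that, since each $\alpha_i$ is central, every commutator is pointwise invariant, and hence so is every element of $B$; the substitution therefore alters only the leading $y_1^{s_1}\cdots y_n^{s_n}$ while leaving $g_{(s_1,\ldots,s_n)}$ untouched. Thus $f\in I$ yields
\[
\sum(y_1+\alpha_1)^{s_1}\cdots(y_n+\alpha_n)^{s_n}\,g_{(s_1,\ldots,s_n)}\in I
\]
for every $\alpha_1,\ldots,\alpha_n\in F$. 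Viewing the left side as a polynomial in the $\alpha_i$ with coefficients in $F\langle Y\cup Z\rangle$ and again using that $F$ is infinite, every such coefficient lies in $I$. The coefficient of the top monomial $\alpha_1^{d_1^y}\cdots\alpha_n^{d_n^y}$ is exactly $g_{(d_1^y,\ldots,d_n^y)}$, placing it in $I$; peeling off by induction on the componentwise partial order (each successive coefficient equals the new $g_{(s_1,\ldots,s_n)}$ plus a combination of $y$-monomials times previously-established elements of $I$, which $I$ absorbs) places every $g_{(s_1,\ldots,s_n)}$ in $B\cap I$. By the defining property of $W$, all coefficients $\alpha_{(s_1,\ldots,s_n),w}$ must vanish.

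The main obstacle is the pointwise invariance of $B$ under the central shifts $y_i\mapsto y_i+\alpha_i$: this is what makes the coefficient-extraction work, and depends crucially on $Y$-properness, namely that each $y_i$ occurring in an element of $B$ appears only inside a commutator. Verifying that the two substitution families above genuinely define $*$-endomorphisms is routine but essential and uses $\mathrm{char}(F)\neq 2$. Once these ingredients are in place, the remainder is the standard Vandermonde/peeling bookkeeping encapsulated in \cite[Lemma~2.1]{drenskygiambruno} and \cite[Proposition~4.3.11]{drenskybook}.
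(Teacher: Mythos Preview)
Your argument is correct and follows exactly the approach the paper points to via \cite[Lemma~2.1]{drenskygiambruno} and \cite[Proposition~4.3.11]{drenskybook}: PBW for spanning, then the $*$-endomorphism $y_i\mapsto y_i+\alpha_i$ (which fixes $B$ pointwise because every $y$-occurrence in a $Y$-proper polynomial sits inside a commutator) together with Vandermonde/peeling to force each $g_{(s_1,\ldots,s_n)}$ into $B\cap I$. The only point to tighten is the phrase ``I may assume that $W$ consists of multi-homogeneous elements'': since $W$ is \emph{given} in the hypothesis you should either justify that reduction (the result for one basis of $B/(B\cap I)$ transfers to any other by change of basis) or, more cleanly, drop the multihomogeneity step entirely---your peeling induction on the componentwise partial order of exponent tuples already works directly on the finite sum $f=\sum y_1^{s_1}\cdots y_n^{s_n}g_{(s_1,\ldots,s_n)}$ without any homogeneity assumption.
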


Recall that a polynomial $g(x_1,x_1^*, \ldots,x_n,x_n^*) \in F\langle X \cup X^* \rangle$ is 
a $*$-central polynomial for an algebra with involution $(A,\circledast)$ if 
\[g(a_1,a_1^{\circledast}, \ldots,a_n,a_n^{\circledast})\in Z(A)\]
for all $a_1,\ldots,a_n \in A$.  
Note that $f(y_1,\ldots,y_n,z_1,\ldots,z_m)\in F\langle Y\cup Z \rangle$ is a $*$-central polynomial 
for $(A,\circledast)$ if 
\[f(a_1,\ldots,a_n,b_1,\ldots,b_m)\in Z(A)\]
for all $a_1,\ldots,a_n \in A^{+}$ and $b_1,\ldots,b_m\in A^{-}$. Denote by $C(A,\circledast)$ 
the set of all $*$-central polynomials
for $(A,\circledast)$. This set is a $T(*)$-space that is a vector space invariant under all
$*$-endomorphisms of $F\langle X \cup X^* \rangle$. 

If $W \subseteq F\langle Y \cup Z \rangle$ then we denote by 
$\langle W \rangle^{TS(*)}$ the $T(*)$-space generated by $W$ that is
the smallest $T(*)$-space of $F\langle Y \cup Z \rangle$ containing $W$.
It's the vector space generated by the polynomials 
\[f(g_1,\ldots,g_n,h_1,\ldots,h_m) \]
where $f(y_1,\ldots,y_n,z_1,\ldots,z_m) \in W$ , \ 
$g_1,\ldots,g_n \in F\langle Y \cup Z \rangle^{+}$ and $h_1,\ldots,h_m \in F\langle Y \cup Z \rangle^{-}$.

Using similar arguments as in \cite[Proposition 4.2.3]{drenskybook} we state the following:

\begin{proposition}\label{propositconseq}
Let $F$ be a field (finite or infinite) with $|F|\geq q$. 
Let $f \in F\langle Y\cup Z \rangle$ and $w\in Y\cup Z$. Write
\[f=\sum_{i=0}^{d_w} f^{(i)} 
\]
where $f^{(i)}$ is the homogeneous component of $f$ with $\deg_{w} f^{(i)}=i$.
If $d_w < q$ then
\[\langle f \rangle^{TS(*)}=\langle f^{(0)},f^{(1)},\ldots,f^{(d_w)} \rangle^{TS(*)}.\]
\end{proposition}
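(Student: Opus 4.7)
The plan is to adapt the standard Vandermonde/homogeneous-component argument (as used in \cite[Proposition 4.2.3]{drenskybook}) to the $T(*)$-space setting. The inclusion $\langle f^{(0)},\ldots,f^{(d_w)} \rangle^{TS(*)} \supseteq \langle f \rangle^{TS(*)}$ is immediate: since $f = \sum_i f^{(i)}$ is a linear combination of the $f^{(i)}$, it lies in the $T(*)$-space they generate. The content of the proposition is the reverse inclusion, and the idea is to recover each homogeneous component $f^{(i)}$ as a linear combination of scalar-substituted copies of $f$.

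First, I would use the hypothesis $d_w < q \le |F|$ to pick $d_w + 1$ pairwise distinct scalars $\alpha_0, \alpha_1, \ldots, \alpha_{d_w} \in F$. For each $k$, define $g_k \in F\langle Y \cup Z \rangle$ to be the polynomial obtained from $f$ by the substitution $w \mapsto \alpha_k w$, leaving every other variable in $Y \cup Z$ unchanged. The crucial observation is that this is a legitimate $T(*)$-substitution: if $w = y_j$ then $\alpha_k y_j \in F\langle Y \cup Z \rangle^{+}$, and if $w = z_j$ then $\alpha_k z_j \in F\langle Y \cup Z \rangle^{-}$. Therefore $g_k \in \langle f \rangle^{TS(*)}$ for every $k$.

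Next, by homogeneity in $w$,
\[
g_k = \sum_{i=0}^{d_w} \alpha_k^{i}\, f^{(i)}, \qquad k=0,1,\ldots, d_w.
\]
The $(d_w+1) \times (d_w+1)$ coefficient matrix $(\alpha_k^{i})_{k,i}$ is a Vandermonde matrix in the distinct scalars $\alpha_0, \ldots, \alpha_{d_w}$, so its determinant $\prod_{k < \ell}(\alpha_\ell - \alpha_k)$ is nonzero in $F$. Inverting this linear system expresses each $f^{(i)}$ as an $F$-linear combination of $g_0, \ldots, g_{d_w}$, which already lie in $\langle f \rangle^{TS(*)}$. Hence $f^{(i)} \in \langle f \rangle^{TS(*)}$ for every $i$, and the desired inclusion follows.

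There is no real obstacle here; the only subtlety worth flagging is the verification that the scalar substitution $w \mapsto \alpha_k w$ stays inside the class of admissible $T(*)$-substitutions (symmetric to symmetric, skew to skew), which is what makes the proof go through uniformly for $w \in Y$ or $w \in Z$. The hypothesis $d_w < q \le |F|$ is used only at the single point of picking enough distinct scalars to make the Vandermonde matrix invertible.
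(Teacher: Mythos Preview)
Your proof is correct and is exactly the standard Vandermonde argument that the paper invokes by citing \cite[Proposition 4.2.3]{drenskybook}; the paper does not give an explicit proof beyond that reference. The one point worth noting, which you handled, is that the substitution $w\mapsto\alpha_k w$ extends to a $*$-endomorphism of $F\langle Y\cup Z\rangle$ (since scalar multiples of symmetric/skew variables remain symmetric/skew), so each $g_k$ indeed lies in $\langle f\rangle^{TS(*)}$.
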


Using Proposition \ref{propositconseq} and similar arguments as in 
\cite[Proposition 4.2.3]{drenskybook} we state the following:

\begin{proposition}\label{proposgeradoremultilemultih}
Let $I$ be a $T(*)$-space of $F\langle Y\cup Z \rangle$. 

\begin{itemize}
 \item[a)] If $F$ is an infinite field then $I$ is generated, as a $T(*)$-space, by its multihomogeneous elements.

 \item[b)] If $F$ is a field of char$(F)=0$ then $I$ is generated, as a $T(*)$-space, by its multilinear elements.

 \end{itemize}
 \end{proposition}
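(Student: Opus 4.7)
The plan is to mimic the classical proof for ordinary $T$-spaces (as in \cite[Proposition 4.2.3]{drenskybook}), adapting the Vandermonde substitution so that every substitution used is a genuine $*$-endomorphism. The key observation is that the map $w \mapsto \alpha w$ with $\alpha \in F$, or more generally $w \mapsto \alpha_1 w_{i_1} + \cdots + \alpha_k w_{i_k}$ with new variables $w_{i_j}$ of the same parity as $w$, is always a $*$-endomorphism of $F\langle Y \cup Z\rangle$, because sums and scalar multiples of symmetric (resp.\ skew-symmetric) elements are symmetric (resp.\ skew-symmetric). Hence applying such substitutions to any $f \in I$ lands us back in $I$.

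For part (a), take $f\in I$ and let $w$ be any variable appearing in $f$, with $d_w$-degree decomposition $f=\sum_{i=0}^{d_w}f^{(i)}$. Since $F$ is infinite I can pick $d_w+1$ distinct scalars $\alpha_0,\ldots,\alpha_{d_w}\in F$, and the substitution $w\mapsto\alpha_j w$ yields $\sum_i \alpha_j^i f^{(i)}\in I$ for each $j$. The coefficient matrix $(\alpha_j^i)$ is Vandermonde and therefore invertible, so each $f^{(i)}$ is a linear combination of elements of $I$, hence lies in $I$. This is essentially Proposition \ref{propositconseq} applied with $q=\infty$. Iterating the decomposition over every variable occurring in $f$ writes $f$ as a sum of multihomogeneous elements, each of which belongs to $I$. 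Thus the multihomogeneous elements of $I$ span $I$ as a vector space, and in particular generate $I$ as a $T(*)$-space.

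For part (b), by part (a) it suffices to treat a multihomogeneous $f\in I$. If every variable occurs with degree at most $1$, we are done. Otherwise pick a variable $w$ with $\deg_w f=d\ge 2$ and substitute $w\mapsto w_1+\cdots+w_d$, where $w_1,\ldots,w_d$ are fresh variables of the same parity as $w$; this is a $*$-endomorphism, so the image lies in $I$. Applying part (a) to extract the component $\tilde f$ that is multilinear in $w_1,\ldots,w_d$, we get $\tilde f\in I$; substituting back $w_1=\cdots=w_d=w$ yields $\tilde f|_{w_j=w}=d!\, f$. Because $\mathrm{char}(F)=0$ we can divide by $d!$, so $f$ lies in the $T(*)$-space generated by $\tilde f$, which has strictly smaller degree in each of the variables replacing $w$. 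Iterating this partial linearization over every variable of degree $\ge 2$ terminates in finitely many steps, producing a multilinear polynomial in $I$ whose $T(*)$-space contains the original $f$.

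The only mildly delicate point is to verify at every step that the chosen substitutions respect the involution, i.e.\ that the new variables introduced during linearization carry the same parity (symmetric vs.\ skew-symmetric) as the variable being replaced; once this is set up carefully, the Vandermonde and linearization arguments proceed verbatim as in the non-involution case, and nothing else obstructs the proof.
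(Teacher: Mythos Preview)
Your proposal is correct and follows precisely the approach the paper indicates: the paper does not give a detailed proof but simply invokes Proposition~\ref{propositconseq} together with the classical argument from \cite[Proposition~4.2.3]{drenskybook}, and you have carried out exactly that argument, correctly noting that the scalar and linearization substitutions preserve parity and hence are $*$-endomorphisms. Nothing needs to be added or changed.
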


\begin{lemma}\label{partesimetricadefeidentidade}
  Let $f=f(y_1,\ldots,y_n,z_1,\ldots,z_m) \in F\langle Y\cup Z \rangle$ and write
 \[f=f^{+}+f^{-}\]
 where $f^{+}\in F\langle Y\cup Z \rangle^{+}$ and $f^{-}\in F\langle Y\cup Z \rangle^{-}$. Consider
 an algebra with involution $(A,\circledast)$. If 
 \[f(Y_1,\ldots,Y_n,Z_1,\ldots,Z_m)\in A^{-}\] 
 for all $Y_1,\ldots,Y_n \in A^{+}$ and $Z_1,\ldots,Z_m \in A^{-}$ then 
 $f^{+}\in Id(A,\circledast)$.
 \end{lemma}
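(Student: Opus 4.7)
The plan is to exploit the unique decomposition $A = A^{+} \oplus A^{-}$ together with the fact that, when we plug symmetric elements into $y_i$-variables and skew-symmetric elements into $z_j$-variables, the evaluation is compatible with the two involutions in a natural way.

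First I would record the equivariance. The involution $*$ on $F\langle Y \cup Z\rangle$ satisfies $y_i^* = y_i$ and $z_j^* = -z_j$ by construction. Fix $Y_1,\ldots,Y_n \in A^{+}$ and $Z_1,\ldots,Z_m \in A^{-}$, and consider the evaluation map $\varepsilon : F\langle Y \cup Z \rangle \to A$ sending $y_i \mapsto Y_i$, $z_j \mapsto Z_j$. A direct check on monomials (using that $\circledast$ reverses products and that the images of the generators are $\circledast$-eigenvectors with the same eigenvalues as the $*$-eigenvalues of the generators) shows
\[
\varepsilon(g^*) = (\varepsilon(g))^{\circledast}
\]
for every $g \in F\langle Y \cup Z\rangle$. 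In particular, if $g \in F\langle Y \cup Z\rangle^{+}$ then $\varepsilon(g) \in A^{+}$, and if $g \in F\langle Y \cup Z\rangle^{-}$ then $\varepsilon(g) \in A^{-}$.

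Applying this to the decomposition $f = f^{+} + f^{-}$, I obtain
\[
f(Y_1,\ldots,Y_n,Z_1,\ldots,Z_m) = f^{+}(Y,Z) + f^{-}(Y,Z) \in A^{+} \oplus A^{-},
\]
with $f^{+}(Y,Z) \in A^{+}$ and $f^{-}(Y,Z) \in A^{-}$. By the hypothesis, the left-hand side lies in $A^{-}$, so its $A^{+}$-component vanishes. Since the sum $A = A^{+} \oplus A^{-}$ is direct, this forces $f^{+}(Y_1,\ldots,Y_n,Z_1,\ldots,Z_m) = 0$ for every choice of symmetric $Y_i$ and skew-symmetric $Z_j$, i.e.\ $f^{+} \in Id(A,\circledast)$.

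There is no real obstacle here; the only point that deserves care is the equivariance $\varepsilon(g^*) = (\varepsilon(g))^{\circledast}$, which one verifies by induction on the length of a monomial using $(ab)^{\circledast} = b^{\circledast}a^{\circledast}$ and the sign behavior of the generators. Everything else is just the uniqueness of the decomposition $A = A^{+} \oplus A^{-}$, which uses $\mathrm{char}(F) \neq 2$ already invoked in the paper.
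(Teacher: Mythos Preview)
Your proof is correct and follows essentially the same route as the paper: both arguments show that evaluating $f^{+}$ at symmetric/skew-symmetric elements lands in $A^{+}$ (you justify this via the equivariance $\varepsilon(g^{*}) = \varepsilon(g)^{\circledast}$, the paper just asserts it), and then use the direct sum $A = A^{+}\oplus A^{-}$ to conclude that $f^{+}(Y,Z)$, lying in both $A^{+}$ and $A^{-}$, must vanish. Your treatment is slightly more detailed but not different in substance.
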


\begin{proof}
Let $Y_1,\ldots,Y_n \in A^{+}$ and $Z_1,\ldots,Z_m \in A^{-}$. 
Since $f^{+}\in F\langle Y\cup Z \rangle^{+}$ we have $f^{+}(Y_1,\ldots,Y_n,Z_1,\ldots,Z_m)\in A^{+}$.
Since 
\[f(Y_1,\ldots,Y_n,Z_1,\ldots,Z_m)-f^{-}(Y_1,\ldots,Y_n,Z_1,\ldots,Z_m)=
 f^{+}(Y_1,\ldots,Y_n,Z_1,\ldots,Z_m)\]
 we have $f^{+}(Y_1,\ldots,Y_n,Z_1,\ldots,Z_m)\in A^{-}$ too. Therefore 
 \[f^{+}(Y_1,\ldots,Y_n,Z_1,\ldots,Z_m)=0\]
 as desired.
 \end{proof}

\section{$*$-central polynomials for $(UT_2(F),\star)$}

In this section, we describe the $*$-central polynomials for $(UT_2(F),\star)$ where  
\begin{equation}\label{definicaoprimeirainvolucao}
  \left(
 \begin{array}{cc}
  a&c \\
  0&b
 \end{array}
\right)^{\star}=\left(
 \begin{array}{cc}
  b&c \\
  0&a
 \end{array}
\right) 
\end{equation}
for all $a,b,c \in F$. Note that $\{e_{11}+e_{22}, \ e_{12}\}$ and $\{e_{11}-e_{22}\}$ form a basis for the
vector spaces $(UT_2(F))^{+}$ and $(UT_2(F))^{-}$ respectively.

The next lemma is proved in \cite[Theorem 3.1]{vinkossca}. See \cite[Lemma 5.2]{ronalddimas} too.

\begin{lemma}\label{lemavariavcomutam}
 Let $F$ be a field of $char(F)\neq 2$. If $I=Id(UT_2(F),\star)$ then
\begin{eqnarray*}
z_{\sigma (1)} \cdots z_{\sigma (m)}[z_{\sigma (m+1)},y_1]+I&=&z_{1} \cdots z_{m}[z_{m+1},y_1]+I \ \ and \\
z_{1} \cdots z_{m}[z_{m+1},y_1]+I&=&(-1)^{m-i}z_{1} \cdots z_{i} [z_{m+1},y_1] z_{i+1} \cdots  z_{m}+I
\end{eqnarray*}
for all $\sigma \in S_{m+1}$. 
\end{lemma}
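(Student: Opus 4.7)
The approach I would take is direct computation: evaluate both sides of each congruence on arbitrary symmetric and skew-symmetric substitutions and verify equality in $UT_2(F)$. As noted just before the lemma, $(UT_2(F))^+$ has basis $\{e_{11}+e_{22},\ e_{12}\}$ while $(UT_2(F))^-$ is the one-dimensional span of $E:=e_{11}-e_{22}$. Thus every $z_j$ evaluates to a scalar multiple $\alpha_j E$, while $y_1$ evaluates to $\mu(e_{11}+e_{22})+\nu e_{12}$ for some $\alpha_j,\mu,\nu\in F$.

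First I would establish two auxiliary $*$-identities. Since $(UT_2(F))^-$ is one-dimensional, any two admissible substitutions of skew variables commute, so $[z_i,z_j]\in I$ for all $i,j$; this handles free reordering of $z$-factors that sit outside any commutator with $y_1$. A direct computation then gives $[z_{m+1},y_1]\mapsto 2\alpha_{m+1}\nu\cdot e_{12}$, showing that $[z_{m+1},y_1]$ always lies in $F\cdot e_{12}$ under any admissible substitution. Combined with the relations $E\cdot e_{12}=e_{12}=-e_{12}\cdot E$, this yields the anti-commutation $*$-identity
\[
z_i\cdot [z_{m+1},y_1]+[z_{m+1},y_1]\cdot z_i\ \in\ I,
\]
which is exactly the ingredient needed to move the $z$-factors across the inner commutator with a sign.

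To finish, I would evaluate both sides of the first displayed equation. Using the above observations, each side of it reduces to the single scalar multiple $2\alpha_1\cdots\alpha_{m+1}\nu\cdot e_{12}$, via $E^m\cdot e_{12}=e_{12}$; this value is manifestly independent of $\sigma\in S_{m+1}$, proving the first congruence. For the second equation the left side again evaluates to $2\alpha_1\cdots\alpha_{m+1}\nu\cdot e_{12}$, while the right side evaluates to
\[
\alpha_1\cdots\alpha_m\cdot E^i\cdot 2\alpha_{m+1}\nu\, e_{12}\cdot E^{m-i}=2(-1)^{m-i}\alpha_1\cdots\alpha_{m+1}\nu\cdot e_{12},
\]
using $e_{12}\cdot E^{m-i}=(-1)^{m-i}e_{12}$. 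The factor $(-1)^{m-i}$ on the right of the displayed equation cancels, and equality modulo $I$ follows.

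The only real obstacle is careful bookkeeping with the signs produced by successive anti-commutations of $e_{12}$ with $E$; but this is the elementary identity $e_{12}\cdot E=-e_{12}$ applied $m-i$ times. No subtle algebraic manipulation is needed, since everything takes place in the four-dimensional algebra $UT_2(F)$ with its explicit matrix basis.
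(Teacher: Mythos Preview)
Your proof is correct. The direct-evaluation argument works cleanly: because $(UT_2(F))^-=F\cdot E$ is one-dimensional, every substitution turns the monomials in question into scalar multiples of $E^a e_{12} E^b$, and the matrix identities $E e_{12}=e_{12}$, $e_{12}E=-e_{12}$ do all the work. The sign bookkeeping you flag is indeed the only thing to watch, and you have it right.

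The paper does not give its own proof of this lemma; it simply cites \cite[Theorem 3.1]{vinkossca} and \cite[Lemma 5.2]{ronalddimas}. In those references the identities are obtained as algebraic consequences of the generating set $\{[y_1,y_2],\,[z_1,z_2],\,[y_1,z_1][y_2,z_2],\,z_1y_1z_2-z_2y_1z_1\}$ of the $T(*)$-ideal, rather than by evaluating in $UT_2(F)$. That route has the advantage of showing directly that the differences lie in the \emph{explicitly generated} $T(*)$-ideal, which is what one ultimately needs for the basis theorem (Theorem~\ref{teoremaidentidaprimeirainv}). Your evaluation argument only shows membership in $Id(UT_2(F),\star)$ itself; but since the lemma is stated with $I=Id(UT_2(F),\star)$, this is exactly what is required here, and your approach is both shorter and entirely self-contained.
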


\begin{lemma}\label{polcentral1} 
 Let $F$ be a field of $char(F)\neq 2$. Let  
 \[f(z_1,\ldots,z_m)=z_1 z_2 \cdots z_{m}\]
 where $m\geq 0$. If $m$ is even then $f \in C(UT_2(F),\star)$. If $m$ is odd then $f\notin C(UT_2(F),\star)$.
\end{lemma}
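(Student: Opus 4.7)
The plan is to exploit the very restrictive structure of $(UT_2(F))^{-}$ under the involution $\star$, namely that it is one-dimensional, spanned by $d:=e_{11}-e_{22}$. So any admissible substitution for the skew variables is a scalar multiple of $d$, and the whole question reduces to computing powers of $d$.

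First I would record the two basic calculations: $d^2=e_{11}+e_{22}=1$ (the identity of $UT_2(F)$, which is in $Z(UT_2(F))$), and $[d,e_{12}]=2e_{12}\neq 0$, so $d\notin Z(UT_2(F))$. Next, for the case $m$ even, let $Z_1,\ldots,Z_m\in (UT_2(F))^{-}$ be arbitrary. Since $(UT_2(F))^{-}=Fd$, write $Z_i=\lambda_i d$ with $\lambda_i\in F$. Then
\[
f(Z_1,\ldots,Z_m)=\lambda_1\cdots\lambda_m\, d^{m}=\lambda_1\cdots\lambda_m\,(d^2)^{m/2}=\lambda_1\cdots\lambda_m\cdot 1 \in Z(UT_2(F)),
\]
which shows $f\in C(UT_2(F),\star)$.

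For the case $m$ odd, I would exhibit one bad substitution to prove $f\notin C(UT_2(F),\star)$: take $Z_i=d$ for all $i$. Then $f(d,\ldots,d)=d^{m}=d\cdot(d^2)^{(m-1)/2}=d$, and by the second calculation above $d\notin Z(UT_2(F))$. Hence $f$ fails to be a $*$-central polynomial.

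There is essentially no obstacle here; the only conceptual point is remembering that $(UT_2(F))^{-}$ is one-dimensional for $\star$ (which is noted in the paragraph immediately preceding the lemma), after which the parity of $m$ controls everything via $d^2=1$.
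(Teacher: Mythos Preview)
Your proof is correct and follows essentially the same approach as the paper's: both use that $(UT_2(F))^{-}=F(e_{11}-e_{22})$, compute that $(e_{11}-e_{22})^2$ is the identity, and then distinguish the even and odd cases exactly as you do. You add a bit more explicit detail (the commutator $[d,e_{12}]=2e_{12}$ to witness $d\notin Z(UT_2(F))$), but the argument is the same.
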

\begin{proof}
Suppose $m=2n$ for some $n\geq 0$. If $Z_i=\lambda_i (e_{11}-e_{22})$, where $\lambda_i \in F$, then
\[f(Z_1,\ldots,Z_{2n})=(\lambda_1\cdots \lambda_{2n})(e_{11}+e_{22}) \in Z(UT_2(F)).\]
Thus $f \in  C(UT_2(F),\star)$. 

Suppose $m=2n+1$ for some $n\geq 0$. Then 
\[f(e_{11}-e_{22},e_{11}-e_{22},\ldots,e_{11}-e_{22})=e_{11}-e_{22}.\]
Therefore $f\notin C(UT_2(F),\star)$.
\end{proof}

\begin{proposition}\label{propositioncomznadireita}
Let $F$ be a field of $char(F)\neq 2$.
Let 
\[g(y_1,\ldots,y_n,z_1,\ldots,z_m)=f(y_1,\ldots,y_n,z_1,\ldots,z_m)z_{m}\] be a polynomial where 
$f$ is a homogeneous polynomial in the variable $z_m$. If $g \in C(UT_2(F), \star)$ then 
\[g \in \left( Id(UT_2(F), \star)+ 
\langle z_1z_2 \rangle^{TS(*)}\right) . 
\]
\end{proposition}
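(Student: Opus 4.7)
The plan is to exploit the fact that $(UT_2(F))^{-}$ is one-dimensional in order to show that $f$ itself takes only skew-symmetric values in $UT_2(F)$; Lemma \ref{partesimetricadefeidentidade} will then force $f^{+}$ to be a $*$-identity. Once this is established, decomposing $g = f^{+}z_{m} + f^{-}z_{m}$ displays $g$ as the sum of an identity and a product of two skew polynomials, and the latter lies in $\langle z_{1}z_{2}\rangle^{TS(*)}$.

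Concretely, set $D = e_{11} - e_{22}$, so that $(UT_2(F))^{-} = FD$ and $(UT_2(F))^{+}$ is spanned by $e_{11}+e_{22}$ and $e_{12}$, with $De_{12} = e_{12}$, $e_{12}D = -e_{12}$, $D^{2} = e_{11}+e_{22}$ and $e_{12}^{2} = 0$. Fix arbitrary $Y_1, \ldots, Y_n \in (UT_2(F))^{+}$ and $Z_1, \ldots, Z_{m-1} \in (UT_2(F))^{-}$, specialize $z_{m}$ to $D$, and write $\phi := f(Y, Z_1, \ldots, Z_{m-1}, D) = p(e_{11}+e_{22}) + qe_{12} + rD$ with $p,q,r \in F$. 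A direct computation gives
\[
\phi D = r(e_{11}+e_{22}) - qe_{12} + pD,
\]
so the centrality hypothesis $g \in C(UT_2(F),\star)$, which says $\phi D \in F(e_{11}+e_{22})$, forces $p = q = 0$ and hence $\phi \in FD$. Now the homogeneity of $f$ in $z_{m}$ of some degree $k$ yields $f(Y, Z_1, \ldots, Z_{m-1}, \gamma D) = \gamma^{k}\phi \in FD$ for every $\gamma \in F$; since every element of $(UT_2(F))^{-}$ has the form $\gamma D$, this shows that $f(Y,Z) \in (UT_2(F))^{-}$ for every admissible substitution. By Lemma \ref{partesimetricadefeidentidade}, $f^{+} \in Id(UT_2(F),\star)$, whence $f^{+}z_{m} \in Id(UT_2(F),\star)$. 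On the other hand, $f^{-}$ and $z_{m}$ are both skew elements of $F\langle Y \cup Z\rangle$, so the $*$-endomorphism of $F\langle Y \cup Z\rangle$ sending $z_{1} \mapsto f^{-}$ and $z_{2} \mapsto z_{m}$ (and the identity on the remaining generators) maps $z_{1}z_{2}$ to $f^{-}z_{m}$; consequently $f^{-}z_{m} \in \langle z_{1}z_{2}\rangle^{TS(*)}$ and $g$ lies in $Id(UT_2(F),\star) + \langle z_{1}z_{2}\rangle^{TS(*)}$.

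The only delicate step is passing from the information $\phi \in (UT_2(F))^{-}$ (obtained by specializing $z_{m}$ to the particular matrix $D$) to the statement $f(Y,Z) \in (UT_2(F))^{-}$ for every $Z_{m} \in (UT_2(F))^{-}$; this is precisely where the homogeneity of $f$ in $z_{m}$ is used, and is what allows the argument to work uniformly over finite as well as infinite base fields, without invoking Proposition \ref{propositconseq}.
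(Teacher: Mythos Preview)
Your proof is correct and follows essentially the same route as the paper: both show that $f$ takes only skew-symmetric values on $UT_2(F)$, invoke Lemma~\ref{partesimetricadefeidentidade} to obtain $f^{+}\in Id(UT_2(F),\star)$, and finish via the decomposition $g=f^{+}z_{m}+f^{-}z_{m}$. The only cosmetic difference is that the paper keeps $Z_{m}=dD$ generic and splits into the cases $\deg_{z_{m}}f=0$ and $\deg_{z_{m}}f\ge 1$ to handle $d=0$, whereas you first specialize $Z_{m}=D$ and then scale by $\gamma^{k}$ using homogeneity---the same idea packaged slightly differently.
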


\begin{proof}
Let $Y_1,\ldots,Y_n \in UT_2(F)^{+}$ and $Z_1,\ldots,Z_m \in UT_2(F)^{-}$. Then
\[f(Y_1,\ldots,Y_n,Z_1,\ldots,Z_m)=
   \begin{pmatrix}
a & c \\ 
0 & b
\end{pmatrix} \ \ \mbox{and} \ \ Z_{m}=\left(\begin{array}{cc}
d & 0 \\ 
0 & -d
\end{array}\right)\]
for some $a,b,c,d \in F$. Since $f(y_1,\ldots,y_n,z_1,\ldots,z_m)z_{m}\in C(UT_2(F), \star)$, we obtain
\[f(Y_1,\ldots,Y_n,Z_1,\ldots,Z_m)Z_{m}=
   \begin{pmatrix}
ad & -cd \\ 
0 & -bd
\end{pmatrix} \in Z(UT_2(F)).\]
Thus $cd=0$ and $ad=-bd$. 

We have two cases:

\

Case 1. $\deg_{z_m}f=0$.

In this case, 
$f(Y_1,\ldots,Y_n,Z_1,\ldots,Z_m)=f(Y_1,\ldots,Y_n,Z_1,\ldots,Z_{m-1})$. 
If $d=1$ then $a=-b$ and $c=0$. Thus $f(Y_1,\ldots,Y_n,Z_1,\ldots,Z_m)\in UT_2(F)^{-}$.

\

Case 2. $\deg_{z_m}f\geq 1$.

In this case, if $d=0$ then $f(Y_1,\ldots,Y_n,Z_1,\ldots,Z_m)=0 \in UT_2(F)^{-}$. If $d\neq 0$ 
then $a=-b$ and $c=0$. Thus $f(Y_1,\ldots,Y_n,Z_1,\ldots,Z_m)\in UT_2(F)^{-}$.

\

By the two cases we have $f(Y_1,\ldots,Y_n,Z_1,\ldots,Z_m)\in UT_2(F)^{-}$ for all
$Y_1,\ldots,Y_n \in UT_2(F)^{+}$ and $Z_1,\ldots,Z_m \in UT_2(F)^{-}$. By Lemma 
\ref{partesimetricadefeidentidade} we can write $f=f^{+}+f^{-}$ where $f^{+} \in Id(UT_2(F),\star)$ and
$f^{-} \in F\langle Y\cup Z \rangle^{-}$. Thus
\[fz_{m}=f^{+}z_{m}+f^{-}z_{m}\in \left( Id(UT_2(F), \star)+ 
\langle z_1z_2 \rangle^{TS(*)}\right).
\]
The proof is complete. 
\end{proof}

\begin{lemma}\label{coromatrizantisimetrica2}
Let $F$ be a field of $char(F)\neq 2$.
If $n\geq 1$ then 
\[z_1 \cdots	z_{2n}\in \left( Id(UT_2(F), \star)+ 
\langle z_1z_2 \rangle^{TS(*)}\right) . 
\]
\end{lemma}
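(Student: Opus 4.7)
The plan is to split into the trivial case $n=1$ and the case $n \geq 2$. For $n = 1$ the polynomial $z_1 z_2$ is itself the generator of $\langle z_1 z_2 \rangle^{TS(*)}$, so there is nothing to prove. For $n \geq 2$, I will exhibit an explicit decomposition
\[
2\, z_1 z_2 \cdots z_{2n} = A + B,
\]
where $A \in \langle z_1 z_2 \rangle^{TS(*)}$ and $B \in Id(UT_2(F),\star)$, and then divide by $2$ (using $\mathrm{char}(F)\neq 2$).

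The key observation is that $*$ acts on monomials in the $z_i$'s by reversal with a sign: $(z_{i_1} z_{i_2} \cdots z_{i_k})^* = (-1)^k z_{i_k} \cdots z_{i_2} z_{i_1}$. Applied to the word $z_2 z_3 \cdots z_{2n}$, which has odd length $2n-1$, this shows that
\[
h := z_2 z_3 \cdots z_{2n} + z_{2n} z_{2n-1} \cdots z_2
\]
is skew-symmetric in $F\langle Y \cup Z \rangle$. Since $z_1$ is also skew, the product $z_1 \cdot h$ is obtained from the generator $z_1 z_2$ by substituting two skew-symmetric polynomials, and is therefore an element of $\langle z_1 z_2 \rangle^{TS(*)}$. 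This will be the term $A$.

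For the term $B$, I will take $B = z_1 \bigl(z_2 z_3 \cdots z_{2n} - z_{2n} z_{2n-1} \cdots z_2\bigr)$. To see this is an identity, I will use the fact, already exploited in Lemma \ref{lemavariavcomutam}, that an evaluation $z_i \mapsto \lambda_i (e_{11} - e_{22})$ produces pairwise commuting scalar multiples of $e_{11} - e_{22}$. Because $(e_{11}-e_{22})^{2n-1} = e_{11} - e_{22}$, both $z_2 z_3 \cdots z_{2n}$ and $z_{2n} z_{2n-1} \cdots z_2$ evaluate to $\lambda_2 \cdots \lambda_{2n}(e_{11}-e_{22})$, so their difference is a $*$-identity; multiplying by $z_1$ keeps this in the $T(*)$-ideal $Id(UT_2(F),\star)$. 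A direct expansion confirms $A + B = 2\, z_1 z_2 \cdots z_{2n}$, finishing the argument.

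There is no serious obstacle: the one thing one must spot is the correct skew-symmetric polynomial, namely the palindromic symmetrization $z_2 z_3 \cdots z_{2n} + z_{2n} z_{2n-1} \cdots z_2$, whose skewness depends critically on its odd length $2n-1$ (which is why the statement concerns the \emph{even}-length product $z_1 \cdots z_{2n}$).
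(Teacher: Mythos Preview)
Your proof is correct. The decomposition $2\,z_1\cdots z_{2n} = z_1 h + B$ with $h = z_2\cdots z_{2n} + z_{2n}\cdots z_2$ skew (because the word has odd length $2n-1$) and $B = z_1(z_2\cdots z_{2n} - z_{2n}\cdots z_2)$ vanishing on all skew evaluations (since $(UT_2(F))^- = F\cdot(e_{11}-e_{22})$ is one-dimensional, hence commutative) is valid, and dividing by $2$ is legitimate since $\mathrm{char}(F)\neq 2$.

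The paper's own proof takes a slightly different route: it first invokes Lemma~\ref{polcentral1} to observe that $z_1\cdots z_{2n}$ is central, and then applies the general Proposition~\ref{propositioncomznadireita} to any central polynomial of the form $f\,z_m$. If one unwinds that proposition in this special case, one gets the analogous decomposition with $z_{2n}$ factored on the right rather than $z_1$ on the left: $z_1\cdots z_{2n} = f^+ z_{2n} + f^- z_{2n}$ where $f = z_1\cdots z_{2n-1}$ and $f^\pm$ are its symmetric and skew parts. So the underlying mechanism is the same palindromic symmetrization of an odd-length $z$-word. Your argument is more direct in that it bypasses centrality (Lemma~\ref{polcentral1}) and the general proposition entirely, producing the needed decomposition by hand; the paper's version, in exchange, isolates a reusable tool (Proposition~\ref{propositioncomznadireita}) that is applied again in later theorems. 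One small remark: your appeal to Lemma~\ref{lemavariavcomutam} is a bit of a misdirection, since that lemma concerns words containing a commutator $[z,y]$; the real reason $B$ is an identity is simply that $\dim (UT_2(F))^- = 1$, which you do state correctly.
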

\begin{proof}
This is a direct consequence of Lemma \ref{polcentral1} and  Lemma \ref{propositioncomznadireita}.
\end{proof}

\subsection{$C(UT_2(F),\star)$ when char$(F)=0$}

The next theorem was proved in \cite{vinkossca}. See Proposition \ref{baseparapropiosbaseparatudo} and 
\cite[Theorem 3.1]{vinkossca} for details. 
 
\begin{theorem}  \label{teoremaidentidaprimeirainv}
Let $F$ be an infinite field of char$(F)\neq 2$. Consider the involution $\star$ defined
 in (\ref{definicaoprimeirainvolucao}). Denote by $I$ the $T(*)$-ideal generated by the polynomials
 \begin{eqnarray*}
[y_1,y_2], \ \ [z_1,z_2], \ \  [y_1,z_1][y_2,z_2] \ \ \mbox{and} \ \  z_1y_1z_2-z_2y_1z_1.
\end{eqnarray*}
Then $Id(UT_2(F),\star)=I$. Moreover, the quotient vector space $F\langle Y\cup Z \rangle /I$ 
 has a basis consisting of all polynomials of the form
 \begin{equation}\label{geradoresquocientepori}
 y_1^{s_1}\cdots y_n^{s_n} z_1^{r_1}\cdots z_m^{r_m}[z_m,y_k]+I \ \ \ \mbox{and} \ \  \
  y_1^{s_1}\cdots y_n^{s_n} z_1^{r_1}\cdots z_m^{r_m}+I
  \end{equation}
  where $n\geq 1$, \  $m\geq 1$, \  $s_1,\ldots, s_n, r_1,\ldots,r_m\geq 0$, \  $k\geq 1$.
\end{theorem}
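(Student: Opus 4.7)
The plan has four stages. First, verify by direct computation that each of the four generators $[y_1,y_2]$, $[z_1,z_2]$, $[y_1,z_1][y_2,z_2]$, and $z_1y_1z_2-z_2y_1z_1$ is a $*$-polynomial identity for $(UT_2(F),\star)$. Since $(UT_2(F))^{+}$ has basis $\{e_{11}+e_{22},\,e_{12}\}$ and $(UT_2(F))^{-}$ has basis $\{e_{11}-e_{22}\}$, this check reduces to a short matrix computation and establishes the inclusion $I\subseteq Id(UT_2(F),\star)$.

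To prove the reverse inclusion and obtain the claimed basis of the quotient, invoke Proposition~\ref{baseparapropiosbaseparatudo}: it suffices to exhibit a basis of $B/(B\cap I)$, where $B$ is the space of $Y$-proper polynomials, since the prefixes $y_1^{s_1}\cdots y_n^{s_n}$ are then automatic. A typical $Y$-proper monomial has the shape $z_1^{r_1}\cdots z_m^{r_m}f_1\cdots f_t$ with each $f_i$ a commutator of length at least two. The identity $[y_1,z_1][y_2,z_2]\equiv 0\pmod{I}$, together with $[y_1,y_2]\equiv 0$ and $[z_1,z_2]\equiv 0$ (which, after linearization, force each $f_i$ to contribute only through its mixed $[y,z]$ components), kills every term with $t\geq 2$. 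For $t=1$, repeated Jacobi expansion modulo $I$, combined with the centralizing relations and the symmetry $z_1y_1z_2\equiv z_2y_1z_1$ supplied by Lemma~\ref{lemavariavcomutam}, collapses every length-$\geq 2$ commutator into a single $[z_j,y_k]$, with the remaining $z$'s absorbed into the leading block. This produces a spanning set for $B/(B\cap I)$ consisting of $z_1^{r_1}\cdots z_m^{r_m}[z_m,y_k]$ and $z_1^{r_1}\cdots z_m^{r_m}$.

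For linear independence, take a putative $F$-linear combination $g$ of the proposed basis vectors that lies in $Id(UT_2(F),\star)$, and evaluate at generic symmetric and skew-symmetric matrices $y_i\mapsto \lambda_i(e_{11}+e_{22})+\gamma_i e_{12}$ and $z_j\mapsto \mu_j(e_{11}-e_{22})$. Since $F$ is infinite, the resulting polynomial in the scalars $\lambda_i,\gamma_i,\mu_j$ must vanish identically. A short calculation shows that $y_1^{s_1}\cdots y_n^{s_n}z_1^{r_1}\cdots z_m^{r_m}$ produces a contribution whose $e_{12}$-coordinate depends linearly on some $\gamma_i$ with $s_i\geq 1$, while $y_1^{s_1}\cdots y_n^{s_n}z_1^{r_1}\cdots z_m^{r_m}[z_m,y_k]$ evaluates to a pure $e_{12}$ term carrying the distinct monomial $\bigl(\prod\lambda_i^{s_i}\bigr)\mu_1^{r_1}\cdots\mu_{m-1}^{r_{m-1}}\mu_m^{r_m+1}\gamma_k$. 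Matching coefficients of these distinct monomials in $\lambda_i,\gamma_i,\mu_j$ (and separately matching the central $(e_{11}+e_{22})$-coordinate) forces all coefficients of $g$ to vanish.

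The subtle step is the $t=1$ reduction in the spanning argument: an inductive Jacobi/linearization argument must collapse an arbitrary mixed commutator of arbitrary length into a single $[z_j,y_k]$ modulo $I$, and this requires all four generating identities to cooperate. In particular, the relation $z_1y_1z_2=z_2y_1z_1$ is the most delicate ingredient, since it is what lets a symmetric variable be repositioned inside a sandwich of two skew variables during the reduction; without it, commutators such as $[z_1,y_1,z_2]$ would remain genuine obstructions to reaching the proposed normal form.
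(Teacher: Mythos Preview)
The paper does not actually prove this theorem: it is quoted from \cite{vinkossca} (Theorem~3.1 there), with only a pointer to Proposition~\ref{baseparapropiosbaseparatudo} as the relevant reduction tool. Your sketch is therefore not competing with an in-text argument but with an external reference, and your overall architecture---check the four generators, reduce via Proposition~\ref{baseparapropiosbaseparatudo} to the $Y$-proper part $B/(B\cap I)$, then prove spanning and independence there---is exactly the framework the paper signals and that \cite{vinkossca} carries out.

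Two places in your outline deserve tightening. In the spanning step for $t=1$, the phrase ``repeated Jacobi expansion \dots collapses every length-$\geq 2$ commutator into a single $[z_j,y_k]$'' hides the real work: one has to show, by induction on the length, that modulo $I$ every long commutator is congruent to an $F$-linear combination of elements of the form $z_{i_1}\cdots z_{i_t}[z_j,y_k]$, using $[y_i,y_j],[z_i,z_j]\in I$ to strip letters off one at a time and the relation $z_1y_1z_2-z_2y_1z_1\in I$ (not Lemma~\ref{lemavariavcomutam}, which is a consequence, not the source) to handle the mixed three-letter obstructions such as $[z_1,y_1,z_2]$. This is routine but should be stated as an explicit induction. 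In the linear-independence step, the monomials in $\lambda_i,\gamma_i,\mu_j$ produced by the two families are \emph{not} disjoint in the $e_{12}$-coordinate alone: a term $\beta\,\lambda^{s}\mu^{r}$ from the second family contributes $\gamma_k$-terms of the form $s_k\lambda_k^{s_k-1}(\prod_{j\neq k}\lambda_j^{s_j})\mu^{r}$, which can coincide with a first-family monomial for a different index pattern. The clean way (which you do allude to) is to read off the $(1,1)$-entry first, which receives contributions only from the second family and forces all those coefficients to vanish; then the $e_{12}$-entry involves only the first family, whose monomials $\lambda^{s}\mu^{r}\mu_m\gamma_k$ are genuinely distinct. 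With these two clarifications your proposal is correct and matches the intended route.
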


Now we will prove the first main theorem of this paper.

\begin{theorem} \label{teorinvsta1}
 Let $F$ be a field of char$(F)=0$. Consider the involution $\star$ defined
 in (\ref{definicaoprimeirainvolucao}). The set of all $*$-central polynomials of $(UT_2(F), \star)$ is
 \[C(UT_2(F),\star)= Id(UT_2(F), \star)+ 
\langle z_1z_2 \rangle^{TS(*)}+F.\]
\end{theorem}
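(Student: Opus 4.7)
The plan is to prove the two inclusions separately. The inclusion $Id(UT_2(F),\star) + \langle z_1 z_2\rangle^{TS(*)} + F \subseteq C(UT_2(F),\star)$ is immediate: scalars and $*$-identities are trivially $*$-central, and $z_1 z_2$ is $*$-central by Lemma \ref{polcentral1} (with $m=2$), so the whole $T(*)$-space $\langle z_1 z_2\rangle^{TS(*)}$ sits inside $C(UT_2(F),\star)$. For the reverse inclusion, the right-hand side is itself a $T(*)$-space, so by Proposition \ref{proposgeradoremultilemultih}(b) it suffices to verify containment on multilinear $*$-central polynomials. Given such an $f$, multilinear in $y_1,\ldots,y_n,z_1,\ldots,z_m$, I would use Theorem \ref{teoremaidentidaprimeirainv} to write
\[f \equiv A\,T_1 + \sum_{k=1}^n B_k\,T_{2,k} \pmod{Id(UT_2(F),\star)},\]
where $T_1 = y_1\cdots y_n z_1\cdots z_m$ and $T_{2,k} = y_1\cdots\hat{y}_k\cdots y_n z_1\cdots z_{m-1}[z_m,y_k]$.

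Next I would evaluate on generic symmetric $Y_i = \alpha_i I + \beta_i e_{12}$ and skew $Z_j = \gamma_j(e_{11}-e_{22})$ and require the image to lie in $Z(UT_2(F))=FI$. A direct computation shows that every $T_{2,k}$ evaluates to a multiple of $e_{12}$, while $T_1$ contributes a piece proportional to $(e_{11}-e_{22})^m$ together with an $e_{12}$-term. Requiring both the $e_{12}$ and the $(e_{11}-e_{22})$ components of the total evaluation to vanish identically in the parameters then forces $A=0$ and all $B_k=0$ when $m$ is odd (so $f\in Id$), and $B_k=-A/2$ for every $k$ when $m$ is even. The degenerate situations with $n=0$ or $m=0$ are easy: for $n=0$ and $m\geq 2$ even, Lemma \ref{coromatrizantisimetrica2} already places $A\,z_1\cdots z_m$ in $Id+\langle z_1 z_2\rangle^{TS(*)}$; the case $n=m=0$ is absorbed into $F$; and $n=0$ with $m$ odd falls into the identity already, as noted.

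The main obstacle will be the remaining case $m$ even, $n\geq 1$, where I must exhibit $T_1 - \tfrac{1}{2}\sum_k T_{2,k}$ as an element of $Id + \langle z_1 z_2\rangle^{TS(*)}$. My plan is to introduce the skew polynomials
\[U = y_1\cdots y_n z_1 + z_1 y_n\cdots y_1,\qquad V = z_2z_3\cdots z_m + z_mz_{m-1}\cdots z_2,\]
so that $UV$ lies in $\langle z_1 z_2\rangle^{TS(*)}$ via the substitution $z_1\mapsto U$, $z_2\mapsto V$. The technical core is then the reduction of $UV$ modulo $Id$ to the canonical basis form. For this I would combine the commutativity of the $y$'s and of the $z$'s modulo $Id$; the consequence $z_a g z_b \equiv z_b g z_a \pmod{Id}$ of the generator $z_1 y_1 z_2 - z_2 y_1 z_1$ (valid whenever $g$ is symmetric modulo $Id$, as happens for $g=y_1\cdots y_n$); the sliding and permutation rules of Lemma \ref{lemavariavcomutam}; and the auxiliary identity $[y_j,[z_1,y_k]]\in Id$, which I would verify directly by evaluating on the generators of $UT_2(F)^{\pm}$. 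A careful accounting then yields $UV \equiv 4\bigl(T_1 - \tfrac{1}{2}\sum_k T_{2,k}\bigr) \pmod{Id}$, and dividing by $4$ -- valid since char$(F)=0$ -- produces the required expression. The hard part is guessing the correct skew pair $(U,V)$ and then carrying out the mod-$Id$ bookkeeping to assemble exactly the required coefficients.
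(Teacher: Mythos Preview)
Your proposal is correct but takes a genuinely different route in the crucial case $n\geq 1$, $m\geq 2$. The paper does not determine the coefficient relations at all: instead it uses Lemma \ref{lemavariavcomutam} to rewrite the normal form as $\widetilde{f}\,z_m$ modulo $I$ and then invokes Proposition \ref{propositioncomznadireita} directly, which says that any central polynomial of the shape (homogeneous in $z_m$)$\cdot z_m$ already lies in $I+\langle z_1z_2\rangle^{TS(*)}$. You instead pin down the coefficients ($B_k=-A/2$ when $m$ is even) by a generic evaluation and then exhibit an explicit product $UV$ of two skew elements reducing modulo $I$ to $4\bigl(T_1-\tfrac12\sum_k T_{2,k}\bigr)$; your computation is right, including the sign $(-1)^{m-1}=-1$ coming from Lemma \ref{lemavariavcomutam} when the commutator slides across $m-1$ factors of $z$. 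Your route is more constructive and essentially bypasses Proposition \ref{propositioncomznadireita} in the main case (you only lean on it indirectly through Lemma \ref{coromatrizantisimetrica2} when $n=0$), whereas the paper's route is shorter and treats all $m\geq 2$ uniformly with no coefficient chase. One small omission in your write-up: the degenerate case $m=0$, $n\geq 1$ is declared ``easy'' but not actually spelled out; here there are no $T_{2,k}$, and the $e_{12}$-component of your generic evaluation forces $A=0$, so $f\in I$.
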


\begin{proof}
Denote $I=Id(UT_2(F), \star)$ and $C=C(UT_2(F),\star)$.
By Lemma \ref{polcentral1}, we have 
\[C \supseteq (I+ 
\langle z_1z_2 \rangle^{TS(*)}+F). \]
Let $f(y_1,\ldots,y_n,z_1,\ldots,z_m) \in C$ be a multilinear polynomial.
We shall prove that $f \in (I+\langle z_1z_2 \rangle^{TS(*)}+F)$. 
By Theorem \ref{teoremaidentidaprimeirainv},
we have $f+I=\overline{f}+I$ where
\[\overline{f}=\alpha y_1\cdots y_n z_1\cdots z_m + 
\sum_{k=1}^n \alpha_k y_1\cdots \widehat{y_k} \cdots y_n z_1\cdots z_{m-1}[z_m,y_k],\]
for some $\alpha,  \alpha_k \in F$. Thus, there exists $g\in I$ such that $f=\overline{f}+g$. In particular,
$\overline{f}=(f-g) \in C$.

\

Case 1. $n=0$ and $m=0$.

In this case, $\overline{f}=\alpha$ and so $f\in (F+I) \subset (I+\langle z_1z_2 \rangle^{TS(*)}+F)$.

\

Case 2. $n=0$ and $m>0$. 

In this case, 
\[\overline{f}=\alpha z_1\cdots z_m.\]
By Lemma \ref{polcentral1}, we have that $\alpha =0$ or $m$ is even. By Lemma
\ref{coromatrizantisimetrica2}, we obtain $f\in (I+\langle z_1z_2 \rangle^{TS(*)}+F)$.

\

Case 3. $n>0$ and $m=0$.

In this case, 
\[\overline{f}=\alpha y_1\cdots y_n.\]
If $\alpha \neq 0$ then
\[\overline{f}(1,\ldots,1, e_{12})=\alpha e_{12}.\]
Thus $\overline{f} \notin C$, which is a contradiction. Therefore $\alpha=0$ and 
$f\in I \subset (I+\langle z_1z_2 \rangle^{TS(*)}+F)$.

\

Case 4. $n>0$ and $m=1$. 

In this case, 
\[\overline{f}=\alpha y_1\cdots y_n z_1 + 
\sum_{k=1}^n \alpha_k y_1\cdots \widehat{y_k} \cdots y_n [z_1,y_k].\]
Since $\overline{f}\in C$ we have $\overline{f}(1,\ldots,1,e_{11}-e_{22})=\alpha (e_{11}-e_{22}) \in 
Z(UT_2(F))$. Thus $\alpha =0$ and 
\[\overline{f}=\sum_{k=1}^n \alpha_k y_1\cdots \widehat{y_k} \cdots y_n [z_1,y_k].\]
Since $\overline{f}\in C$ we obtain $\overline{f}(1,\ldots ,1,e_{12},1,\ldots ,1, e_{11}-e_{22})=2\alpha_k e_{12}
\in 
Z(UT_2(F))$. Thus $\alpha_k=0$ for all $k=1,\ldots,n$. We prove that 
$f\in I \subset (I+\langle z_1z_2 \rangle^{TS(*)}+F)$.

\

Case 5. $n>0$ and $m\geq 2$.

By Lemma \ref{lemavariavcomutam}, we have $f+I=\overline{f}+I=\overline{\overline{f}}+I$ where
\[\overline{\overline{f}}=\alpha y_1\cdots y_n z_1\cdots z_m - 
\sum_{k=1}^n \alpha_k y_1\cdots \widehat{y_k} \cdots y_n z_1\cdots z_{m-2}[z_{m-1},y_k]z_m.\]
Since $I\subset C$ we have $\overline{\overline{f}} \in C$. By Proposition \ref{propositioncomznadireita},
we obtain $\overline{\overline{f}} \in (I+\langle z_1z_2 \rangle^{TS(*)})$. Thus 
$f \in (I+\langle z_1z_2 \rangle^{TS(*)}) \subset (I+\langle z_1z_2 \rangle^{TS(*)}+F)$.

By the five cases and by Proposition \ref{proposgeradoremultilemultih} we have 
$C=I+\langle z_1z_2 \rangle^{TS(*)}+F$ as desired.
\end{proof}

\subsection{$C(UT_2(F),\star)$ when $F$ is an infinite field of char$(F)>2$}

We start this section with the next proposition. 
Similar result is obtained in \cite[Theorem 6 in Chapter 4]{bahturinbook} when we consider $T$-ideals of the free 
Lie algebra. Moreover, similar result is obtained when we consider $T$-spaces of the free associative algebra.

\begin{proposition}\label{proposicaocorpoinfinitopotenciasdep}
 Let $F$ be an infinite field of $char(F)=p >2$. If $H$ is a $T(*)$-space then $H$ is generated, as a $T(*)$-space,
 by its multihomogeneous elements $f(y_1,\ldots,y_n,z_1,\ldots,z_m)\in H$ with multidegree 
 $(p^{a_1},\ldots,p^{a_n},p^{b_1},\ldots,p^{b_m})$ where $a_1,\ldots,a_n,b_1,\ldots,b_m \geq 0$.
\end{proposition}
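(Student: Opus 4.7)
The plan is to adapt the classical argument (see e.g.\ \cite{bahturinbook}) that over an infinite field of characteristic $p$ every $T$-space of the free associative algebra is generated by multihomogeneous elements whose degrees are powers of $p$. By Proposition \ref{proposgeradoremultilemultih}(a) it suffices to prove that every multihomogeneous $f\in H$ of multidegree $(d_1,\ldots,d_n,e_1,\ldots,e_m)$ lies in the $T(*)$-span of multihomogeneous elements of $H$ whose multidegrees are $p$-power vectors. Let $\nu(f)$ denote the multiset of those $d_i,e_j$ that are \emph{not} powers of $p$, and induct on $\nu(f)$ in the lexicographic order of its decreasing sort. The base case $\nu(f)=\emptyset$ is trivial.

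For the inductive step, pick a variable $w\in\{y_i,z_j\}$ with $d:=\deg_w f$ not a power of $p$, and introduce a fresh variable $w'$ of the same symmetry type as $w$ (a new $y_\ell$ if $w=y_i$, a new $z_\ell$ if $w=z_j$). The assignment $\varphi(w)=w+w'$ fixing every other $y_k,z_k$ extends to a $*$-endomorphism of $F\langle X\cup X^*\rangle$ because $w+w'$ has the same symmetry type as $w$; consequently $f':=\varphi(f)\in H$. Since $F$ is infinite, Proposition \ref{propositconseq} (with any $q>d$) yields that each $w'$-homogeneous component $f_k$ of $f'$ (with $\deg_{w'} f_k=k$) belongs to $\langle f'\rangle^{TS(*)}\subseteq H$. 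By construction $f_k$ has bidegree $(d-k,k)$ in $(w,w')$ and the same multidegree as $f$ in every other variable.

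Two facts now finish the proof. First, expanding monomial by monomial one sees that substituting $w'\mapsto w$ into $f_k$ produces exactly $\binom{d}{k}f$; since this substitution is again a $*$-endomorphism, $\binom{d}{k}f\in\langle f_k\rangle^{TS(*)}$. Second, because $d$ is not a power of $p$, Lucas's theorem furnishes some $0<k<d$ with $\binom{d}{k}\not\equiv 0\pmod p$: writing $d=\sum_i d_ip^i$ in base $p$ with at least two nonzero digits, take $k=d_{i_0}p^{i_0}$ for any nonzero digit $d_{i_0}$ with $d_{i_0}p^{i_0}<d$. For such $k$, invertibility of $\binom{d}{k}$ in $F$ gives $f\in\langle f_k\rangle^{TS(*)}$, while the multiset $\nu(f_k)$ replaces the offending entry $d$ by the strictly smaller values $d-k$ and $k$, so $\nu(f_k)<\nu(f)$ in the chosen well-ordering. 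The induction hypothesis applied to $f_k$ places $f_k$, and therefore $f$, in the desired $T(*)$-span.

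The only point I expect to require some care is the verification that $\varphi$ is indeed a $*$-endomorphism in both the symmetric and the skew-symmetric cases; this follows from the general fact that, using the presentation $F\langle Y\cup Z\rangle=F\langle X\cup X^*\rangle$, any assignment $y_k\mapsto u_k\in F\langle Y\cup Z\rangle^{+}$ and $z_k\mapsto v_k\in F\langle Y\cup Z\rangle^{-}$ extends uniquely to an algebra endomorphism that automatically commutes with $*$, and both of our assignments are of this form. Modulo this bookkeeping, the argument is the standard $p$-power reduction via Lucas's theorem.
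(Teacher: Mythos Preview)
Your approach is essentially the same as the paper's: reduce to multihomogeneous elements via Proposition~\ref{proposgeradoremultilemultih}, pick a variable whose degree $d$ is not a $p$-power, linearize via $w\mapsto w+w'$, extract a homogeneous component, invoke Lucas to get an invertible binomial coefficient, and iterate. The paper phrases the iteration as ``after a few steps'' and shows the two-sided equality $\langle g\rangle^{TS(*)}=\langle \overline{g}\rangle^{TS(*)}$ at each step, while you set up an explicit multiset induction on $\nu(f)$; both framings work.

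There is, however, one small slip in your construction of $k$. You assert that if $d$ is not a power of $p$ then its base-$p$ expansion has at least two nonzero digits, and then take $k=d_{i_0}p^{i_0}$ for some such digit with $d_{i_0}p^{i_0}<d$. But $d=cp^{a}$ with $1<c<p$ (for instance $d=2$ when $p=3$) has a \emph{single} nonzero digit, so your recipe forces $k=d$ and the condition $0<k<d$ fails. The remedy is immediate: write $d=p^{a}q$ with $(p,q)=1$ and take $k=p^{a}$; then $\binom{d}{k}\equiv q\not\equiv 0\pmod p$, and $0<p^{a}<d$ since $q>1$. This is exactly the paper's choice, and it has the pleasant side effect that one of the two new degrees, namely $p^{a}$, is already a $p$-power, making the termination more transparent than in your multiset argument. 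With this correction your proof goes through.
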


\begin{proof}
Denote by $H_M$ the set of all multihomogeneous elements of $H$, and by $H_{PM}$ the set of all
multihomogeneous elements $f(y_1,\ldots,y_n,z_1,\ldots,z_m)\in H$ with multidegree 
 $(p^{a_1},\ldots,p^{a_n},p^{b_1},\ldots,p^{b_m})$ where $a_1,\ldots,a_n,b_1,\ldots,b_m \geq 0$,
 $n\geq 0$ and $m\geq 0$.
 
By Proposition \ref{proposgeradoremultilemultih} it follows that \[H=\langle H_M \rangle ^{TS(*)}.\]
We have to prove $\langle H_M \rangle ^{TS(*)}=\langle H_{PM}\rangle^{TS(*)}$. It's clear that $\langle H_M \rangle ^{TS(*)} \supseteq \langle H_{PM}\rangle^{TS(*)}$.
Note that 
\begin{equation}
\langle H_M \rangle ^{TS(*)} \subseteq \langle H_{PM}\rangle^{TS(*)} \Leftrightarrow H_M \subseteq \langle H_{PM}\rangle^{TS(*)}.
 \end{equation}

Let $g(y_1,\ldots,y_n,z_1,\ldots,z_m) \in H_M$. 

a) If $g\in H_{PM}$ then $g\in \langle H_{PM}\rangle^{TS(*)}$.

b) Suppose $g \notin H_{PM}$. Denote by 
\[d=(d_{y_1},\ldots, d_{y_n},d_{z_1}, \ldots, d_{z_m})\]
the  multidegree of $g$.
Without loss of generality, we may assume that $\deg_{y_1}g=d_{y_1}$ is not a power of $p$.
Let $\deg_{y_1}g=p^kq$ where $(p,q)=1$. Denote by $\overline{g}(y_1,y_2,\ldots,y_n,y_{n+1},z_1,\ldots,z_m)$ the 
multihomogeneous component of 
\[g(y_1+y_{n+1},y_2,\ldots , y_n, z_1, \ldots,z_m)\]
with multidegree 
\[\overline{d}=(\overline{d}_{y_1},\overline{d}_{y_2},\ldots, \overline{d}_{y_n},\overline{d}_{y_{n+1}},\overline{d}_{z_1}, \ldots, \overline{d}_{z_m})=
(\overline{d}_{y_1},d_{y_2},\ldots, d_{y_n},\overline{d}_{y_{n+1}},d_{z_1}, \ldots, d_{z_m})\]
where
\[\deg_{y_1}\overline{g}=\overline{d}_{y_1}=p^k \ \ \mbox{and} \ \  \deg_{y_{n+1}}\overline{g}=\overline{d}_{y_{n+1}}=p^kq-p^k.\]
Since $F$ is an infinite field, we have $\overline{g} \in \langle g \rangle^{TS(*)}$. It is known that
\[ {p^kq \choose p^k}=q \neq 0 \mod p. \]
Thus, since 
\[\overline{g}(y_1, y_2, \ldots,y_n,y_{1},z_1,\ldots,z_m)={p^kq \choose p^k} g(y_1, \ldots,y_n,z_1,\ldots,z_m),\]
we have $g\in \langle \overline{g} \rangle^{TS(*)}$. We prove that $\langle g \rangle^{TS(*)}=\langle \overline{g} \rangle^{TS(*)}$.
Now we can use the same arguments in $\overline{g}$. After a few steps, we will obtain
$\langle g \rangle^{TS(*)}=\langle h \rangle^{TS(*)}$ for some $h\in H_{PM}$. Thus $g\in \langle H_{PM}\rangle^{TS(*)}$.
We prove that $\langle H_M \rangle ^{TS(*)} \subseteq \langle H_{PM}\rangle^{TS(*)}$ as desired.
\end{proof}

\begin{lemma} \label{lemacorpoinfinitopotdep}
Let $F$ be an infinite field of char$(F)=p >2$. Let $L$ be the $T(*)$-space
\[L= Id(UT_2(F), \star)+ 
\langle z_1z_2 \rangle^{TS(*)}+  \langle  y_1^p \rangle^{TS(*)}.\]

a) Then $L \subseteq C(UT_2(F),\star)$. 

b) Let $a_1,\ldots,a_n \geq 1$. 
If $f(y_1,\ldots,y_n)$ is a multihomogeneous polynomial with multidegree $(p^{a_1},\ldots,p^{a_n})$ 
 then $f \in L$.
\end{lemma}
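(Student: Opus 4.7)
For part (a), the plan is routine: show each generator of $L$ lies in $C(UT_2(F),\star)$. The inclusion $Id(UT_2(F),\star)\subseteq C(UT_2(F),\star)$ is immediate; the polynomial $z_1z_2$ is central by Lemma \ref{polcentral1} applied to $m=2$; and for $y_1^p$, any symmetric matrix has the form $\alpha(e_{11}+e_{22})+\beta e_{12}$ with $e_{12}^2=0$ and $e_{12}$ commuting with $e_{11}+e_{22}$, so its $p$-th power reduces via the binomial expansion to $\alpha^p(e_{11}+e_{22})+p\alpha^{p-1}\beta e_{12}=\alpha^p(e_{11}+e_{22})\in Z(UT_2(F))$. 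Since $C(UT_2(F),\star)$ is a $T(*)$-space containing all three generators, $L\subseteq C(UT_2(F),\star)$.

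For part (b), set $I=Id(UT_2(F),\star)$. The first step is to reduce $f$ modulo $I$. Since $[y_i,y_j]\in I$ by Theorem \ref{teoremaidentidaprimeirainv}, any two $y$'s commute in $F\langle Y\cup Z\rangle/I$, so every monomial appearing in the multihomogeneous polynomial $f$ collapses to the ordered monomial $y_1^{p^{a_1}}\cdots y_n^{p^{a_n}}$. Thus $f\equiv \alpha\, y_1^{p^{a_1}}\cdots y_n^{p^{a_n}}\pmod{I}$ for some $\alpha\in F$, and since $I\subseteq L$ it suffices to show that the single monomial $y_1^{p^{a_1}}\cdots y_n^{p^{a_n}}$ belongs to $L$.

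The second step is a symmetrization/Frobenius trick. Set $u_i=y_i^{p^{a_i-1}}$ (well-defined because $a_i\geq 1$) and consider
\[
h \;=\; u_1u_2\cdots u_n + u_n\cdots u_2u_1,
\]
which is symmetric because each $u_i$ is a power of a symmetric element and $h^*$ simply swaps the two summands. Since $y_1^p\in L$ and $L$ is a $T(*)$-space, substituting $y_1\mapsto h$ gives $h^p\in L$. Working in the commutative quotient $F\langle Y\cup Z\rangle/I$ (where the $u_i$'s commute), we have $h\equiv 2u_1\cdots u_n$, and therefore
\[
h^p \;\equiv\; (2u_1\cdots u_n)^p \;=\; 2^p u_1^p\cdots u_n^p \;=\; 2^p\, y_1^{p^{a_1}}\cdots y_n^{p^{a_n}} \pmod{I}.
\]
Because $p>2$ we have $2^p\neq 0$ in $F$, so dividing by $2^p$ yields $y_1^{p^{a_1}}\cdots y_n^{p^{a_n}}\in L$, completing the argument.

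The main subtlety to be careful about is the passage from $h\equiv 2u_1\cdots u_n \pmod I$ to $h^p\equiv(2u_1\cdots u_n)^p \pmod I$: this uses that $I$ is a two-sided ideal (so any term in the expansion of $h^p$ involving the error $h-2u_1\cdots u_n\in I$ lies in $I$), together with the fact that inside $F\langle Y\cup Z\rangle/I$ the subalgebra generated by the $y_i$'s is commutative, which legitimizes both the rearrangement $(u_1\cdots u_n)^p=u_1^p\cdots u_n^p$ and the extraction of the scalar $2^p$ via the characteristic-$p$ Frobenius identity. Everything else reduces to multinomial/commutative algebra in that quotient.
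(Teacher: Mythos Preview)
Your proof is correct and follows essentially the same approach as the paper's. The only cosmetic difference is that the paper absorbs the factor $1/2$ into the symmetric element itself, taking $g=\tfrac12(u_1\cdots u_n+u_n\cdots u_1)$ so that $g\equiv u_1\cdots u_n\pmod I$ and $g^p$ already equals the desired monomial modulo $I$, whereas you symmetrize without the $\tfrac12$, obtain an extra factor $2^p$, and divide it out at the end; both variants use $p>2$ in the same way.
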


\begin{proof}
a) Let $Y\in UT_2(F)^{+}$. Thus $Y=\begin{pmatrix}
a & b \\ 
0 & a
\end{pmatrix}$ for some  $a,b \in F$. If $i\geq 1$ then
\[Y^i=
\left( \begin{array}{cc}
  a^i& ia^{i-1}b \\
  0 & a^i
 \end{array}\right).
\]
Therefore $Y^p \in Z(UT_2(F))$ and $y_1^p \in C(UT_2(F),\star)$. 
By Lemma \ref{polcentral1}, we have
$z_1z_2 \in C(UT_2(F),\star)$. 
Therefore $L \subseteq C(UT_2(F),\star)$.

b) Denote $I=Id(UT_2(F), \star)$. By Theorem \ref{teoremaidentidaprimeirainv},
\[f+I=\alpha y_1^{p^{a_1}} y_2^{p^{a_2}} \cdots y_n^{p^{a_n}}+I\]
for some $\alpha \in F$. Since $[y_i,y_j] \in I$ (see Theorem \ref{teoremaidentidaprimeirainv}),
we obtain
\begin{eqnarray*}
f+I&=&\alpha y_1^{p^{a_1}}  \cdots y_n^{p^{a_n}}+I=
\alpha \left(y_1^{p^{a_1-1}}  \cdots y_n^{p^{a_n-1}}\right)^p +I\\
&=&\alpha 
\left(1/2( y_1^{p^{a_1-1}}  \cdots y_n^{p^{a_n-1}}+y_n^{p^{a_n-1}}  \cdots y_1^{p^{a_1-1}}) \right)^p +I\\
&=&\alpha g^p +I,
\end{eqnarray*}
where $g=1/2( y_1^{p^{a_1-1}}  \cdots y_n^{p^{a_n-1}}+y_n^{p^{a_n-1}}  \cdots y_1^{p^{a_1-1}})$. Since $g$
is a symmetric polynomial, we have $\alpha g^p \in \langle  y_1^p \rangle^{TS(*)}$ and therefore
$f \in (I+\langle  y_1^p \rangle^{TS(*)}) \subseteq L$. 
The proof is complete.
\end{proof}

\begin{theorem}\label{teorinvsta2}
Let $F$ be an infinite field of char$(F)=p >2$. Consider the involution $\star$ defined
in (\ref{definicaoprimeirainvolucao}). The set of all $*$-central polynomials of $(UT_2(F), \star)$ is
\[C(UT_2(F),\star)= Id(UT_2(F), \star)+ 
\langle z_1z_2 \rangle^{TS(*)}+  \langle  y_1^p \rangle^{TS(*)}.\]
\end{theorem}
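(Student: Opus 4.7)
The plan is to establish the nontrivial inclusion $C(UT_2(F),\star)\subseteq L$, where $L$ denotes the right-hand side of the theorem; the reverse inclusion is Lemma \ref{lemacorpoinfinitopotdep}(a). Write $I=Id(UT_2(F),\star)$. By Proposition \ref{proposicaocorpoinfinitopotenciasdep} it suffices to handle a multihomogeneous $f\in C(UT_2(F),\star)$ whose multidegree is $(p^{a_1},\ldots,p^{a_n},p^{b_1},\ldots,p^{b_m})$. Using the basis of $F\langle Y\cup Z\rangle/I$ from Theorem \ref{teoremaidentidaprimeirainv}, one may replace $f$ modulo $I$ by
\[\overline f=\alpha\, y_1^{p^{a_1}}\cdots y_n^{p^{a_n}} z_1^{p^{b_1}}\cdots z_m^{p^{b_m}}+\sum_{k=1}^n \alpha_k\, y_1^{p^{a_1}}\cdots y_k^{p^{a_k}-1}\cdots y_n^{p^{a_n}} z_1^{p^{b_1}}\cdots z_m^{p^{b_m}-1}[z_m,y_k],\]
so the problem reduces to showing $\overline f\in L$. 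The overall structure then mirrors the case split on $(n,m)$ used in Theorem \ref{teorinvsta1}.

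The easy cases are handled as follows. If $n=m=0$ then $\overline f=\alpha\in F\subseteq \langle y_1^p\rangle^{TS(*)}$, since $y_1^p|_{y_1=1}=1$. If $n=0$ and $m\ge 1$, Lemma \ref{polcentral1} forces $\alpha=0$ when $m$ is odd (evaluating at $z_i=e_{11}-e_{22}$, using that each $p^{b_j}$ is odd), and when $m$ is even, specializing the letters in Lemma \ref{coromatrizantisimetrica2} places the monomial in $I+\langle z_1z_2\rangle^{TS(*)}$. If $n\ge 1$ and $m=0$, then either some $a_i=0$ and the evaluation $y_i=e_{12}$, $y_j=1$ for $j\ne i$ forces $\alpha=0$, or every $a_i\ge 1$ and Lemma \ref{lemacorpoinfinitopotdep}(b) applies directly. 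When $n\ge 1$ and $m\ge 2$ I would mimic Case~5 of Theorem \ref{teorinvsta1}: apply the permutation part of Lemma \ref{lemavariavcomutam} to swap the $z_m$ inside each commutator with one of the $p^{b_{m-1}}\ge 1$ available copies of $z_{m-1}$, and then the sign part to slide all $p^{b_m}$ copies of $z_m$ past the resulting commutator $[z_{m-1},y_k]$, producing the overall sign $(-1)^{p^{b_m}}=-1$ since $p$ is odd. This gives $\overline f\equiv h\cdot z_m\pmod I$ with $h$ homogeneous in $z_m$, and Proposition \ref{propositioncomznadireita} yields $\overline f\in I+\langle z_1z_2\rangle^{TS(*)}\subseteq L$.

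The main obstacle is the remaining case $n\ge 1$ and $m=1$, where the sliding trick from the previous step is not uniformly available and the generators $z_1z_2$ and $y_1^p$ of $L$ must be exploited through direct evaluations. My plan is to annihilate every coefficient of $\overline f$ in three stages. First, $y_i=1$ and $z_1=e_{11}-e_{22}$ kills every commutator term (via $[\,\cdot\,,1]=0$) and leaves $\alpha(e_{11}-e_{22})$, forcing $\alpha=0$. Second, for each $k$ with $a_k=0$, the substitution $y_k=e_{12}$, $y_j=1$ for $j\ne k$, and $z_1=e_{11}-e_{22}$ isolates the $k$-th term as $2\alpha_ke_{12}$, forcing $\alpha_k=0$. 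Third, for each $k$ with $a_k\ge 1$, the generic substitution $y_i=a_iI+c_ie_{12}$ (symmetric) and $z_1=d(e_{11}-e_{22})$ reduces the $k$-th commutator term to
\[2\,\alpha_k\, c_k\, a_k^{p^{a_k}-1}\prod_{i\ne k}a_i^{p^{a_i}}\, d^{p^{b_1}}\, e_{12},\]
using the congruence $\binom{p^{a_i}}{1}\equiv 0\pmod p$ to eliminate the off-diagonal part of $Y_i^{p^{a_i}}$ and $e_{12}^2=0$ to collapse cross terms. Centrality of $\overline f$ forces this $e_{12}$-coefficient to vanish for all choices of parameters, and since $F$ is infinite the resulting polynomial identity in the independent variables $a_i,c_i,d$ yields $\alpha_k=0$ for every $k$. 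Hence $\overline f\equiv 0\pmod I$ and $f\in I\subseteq L$, completing the proof.
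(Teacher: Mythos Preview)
Your proof is correct and follows essentially the same architecture as the paper's: reduce via Proposition~\ref{proposicaocorpoinfinitopotenciasdep} to $p$-power multidegrees, replace $f$ by $\overline f$ using the basis of Theorem~\ref{teoremaidentidaprimeirainv}, and then treat cases according to the presence of $z$-variables, invoking Lemma~\ref{lemavariavcomutam} plus Proposition~\ref{propositioncomznadireita} whenever a $z_m$ can be extracted on the right.

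The only substantive difference is in how you slice the $z$-cases. You split on $m$ alone ($m=0$, $m=1$, $m\ge 2$), doing the swap-with-$z_{m-1}$ trick uniformly for $m\ge 2$ and handling all of $m=1$ by direct evaluation. The paper instead splits on $b_m$: when $b_m\ge 1$ (so $p^{b_m}-1\ge 2$), even for $m=1$ one can slide a single $z_m$ across the commutator via Lemma~\ref{lemavariavcomutam},
\[z_m^{p^{b_m}-1}[z_m,y_k]+I=-\,z_m^{p^{b_m}-2}[z_m,y_k]\,z_m+I,\]
and finish with Proposition~\ref{propositioncomznadireita}; direct evaluation is then needed only for $m=1$, $b_1=0$. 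Your third-stage evaluation argument for $m=1$ is valid (the key point being that the $k$-th summand is the unique one carrying the variable $c_k$, so infiniteness of $F$ separates the coefficients), but it does more work than necessary in the subcase $b_1\ge 1$. One small remark: the congruence $\binom{p^{a_i}}{1}\equiv 0$ holds only for $a_i\ge 1$, so it does not kill the off-diagonal part of $Y_i^{p^{a_i}}$ when $a_i=0$; however, your parenthetical ``$e_{12}^2=0$ to collapse cross terms'' is exactly what makes those contributions disappear after multiplying by the $e_{12}$ coming from $[Z_1,Y_k]$, so the displayed formula for the $k$-th term is correct.
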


\begin{proof}

Denote $I=Id(UT_2(F), \star)$ and $C=C(UT_2(F),\star)$.
By Lemma \ref{lemacorpoinfinitopotdep}, we have 
\[C \supseteq (I+ 
\langle z_1z_2 \rangle^{TS(*)}+\langle  y_1^p \rangle^{TS(*)}). \]
Let $f(y_1,\ldots,y_n,z_1,\ldots,z_m) \in C$ be a multihomogeneous polynomial with multidegree
$(p^{a_1},\ldots,p^{a_n},p^{b_1},\ldots,p^{b_m})$ 
where $a_1,\ldots,a_n, b_1, \ldots, b_m \geq 0$. 
We shall prove that $f \in (I+\langle z_1z_2 \rangle^{TS(*)}+\langle  y_1^p \rangle^{TS(*)})$. 
By Theorem \ref{teoremaidentidaprimeirainv}
we obtain $f+I=\overline{f}+I$ where
\begin{eqnarray*}
\overline{f}&=&
 \sum_{k=1}^n \alpha_k y_1^{p^{a_1}}\cdots y_k^{p^{a_k}-1} \cdots  y_n^{p^{a_n}} z_1^{p^{b_1}}\cdots z_m^{p^{b_m}-1}[z_m,y_k]+\\
 &&+\alpha y_1^{p^{a_1}} \cdots  y_n^{p^{a_n}} z_1^{p^{b_1}}\cdots z_m^{p^{b_m}}
\end{eqnarray*}
for some $\alpha_1,\ldots,\alpha_n, \alpha \in F$.

\

Case 1. $n=0$ and $m=0$.

In this case, $f=\alpha$ and so 
\[f\in F \subset 
 \langle  y_1^p \rangle^{TS(*)} \subset (I+ 
\langle z_1z_2 \rangle^{TS(*)}+\langle  y_1^p \rangle^{TS(*)}).\]

\

Case 2. $n\geq 1$ and $m=0$.

In this case, 
\[\overline{f}=\alpha y_1^{p^{a_1}} \cdots  y_n^{p^{a_n}}.\]

If $a_i=0$, for some $i$, then 
\[\overline{f}=\alpha y_1^{p^{a_1}} \cdots y_i \cdots  y_n^{p^{a_n}}\]
and 
$\overline{f}(1,\ldots,1,y_i,1,\ldots,1)= \alpha y_i \in C$. Thus $\alpha =0$, $\overline{f}=0$ and 
$f\in I \subset (I+ 
\langle z_1z_2 \rangle^{TS(*)}+\langle  y_1^p \rangle^{TS(*)})$.

Suppose $a_1,\ldots,a_n \geq 1$. By Lemma \ref{lemacorpoinfinitopotdep},  
\[\overline{f} \in (I+ 
\langle z_1z_2 \rangle^{TS(*)}+\langle  y_1^p \rangle^{TS(*)})\] and therefore
$f \in (I+ 
\langle z_1z_2 \rangle^{TS(*)}+\langle  y_1^p \rangle^{TS(*)})$.

\

Case 3. $m=1$ and $b_m=0$.

In this case, 
\[\overline{f}=
 \sum_{k=1}^n \alpha_k y_1^{p^{a_1}}\cdots y_k^{p^{a_k}-1} \cdots  y_n^{p^{a_n}}[z_1,y_k]+
 \alpha y_1^{p^{a_1}} \cdots  y_n^{p^{a_n}} z_1.
\]

Since $\overline{f} \in C$ we have $\overline{f}(1,\ldots,1,z_1)=\alpha z_1 \in C$. Thus $\alpha =0$ and 
\[\overline{f}=
 \sum_{k=1}^n \alpha_k y_1^{p^{a_1}}\cdots y_k^{p^{a_k}-1} \cdots  y_n^{p^{a_n}}[z_1,y_k].\]

 If $Y_1=\ldots = Y_{k-1}= Y_{k+1}=\ldots = Y_n=e_{11}+e_{22}$, $Y_k=e_{11}+e_{22}+e_{12}$ and 
$Z_1=e_{11}-e_{22}$, then
\[\overline{f}(Y_1,\ldots,Y_n,Z_1)=  2 \alpha_k e_{12} \in Z(UT_2(F)).\]
Thus $\alpha_k=0$ for all $k=1,\ldots,n$ and $\overline{f}=0$. Therefore
\[f \in I \subseteq (I+ 
\langle z_1z_2 \rangle^{TS(*)}+\langle  y_1^p \rangle^{TS(*)}).\]

\

Case 4. $m\geq 2$ and $b_m=0$.

In this case,
\begin{eqnarray*}
\overline{f}&=&
 \sum_{k=1}^n \alpha_k y_1^{p^{a_1}}\cdots y_k^{p^{a_k}-1} \cdots  y_n^{p^{a_n}}
 z_1^{p^{b_1}}\cdots z_{m-1}^{p^{b_{m-1}}}[z_m,y_k]+\\
 &&+\alpha y_1^{p^{a_1}} \cdots  y_n^{p^{a_n}} z_1^{p^{b_1}}\cdots z_{m-1}^{p^{b_{m-1}}}z_m.
\end{eqnarray*}

By Lemma \ref{lemavariavcomutam} we have 
\[z_{m-1}^{p^{b_{m-1}}}[z_m,y_k]+I=z_{m-1}^{p^{b_{m-1}}-1}z_m[z_{m-1},y_k]+I=
-z_{m-1}^{p^{b_{m-1}}-1}[z_{m-1},y_k]z_m+I.\]
Thus $\overline{f}+I=\widetilde{f}z_m+I$ where 

\begin{eqnarray*}
\widetilde{f}&=&
 -\sum_{k=1}^n \alpha_k y_1^{p^{a_1}}\cdots y_k^{p^{a_k}-1} \cdots  y_n^{p^{a_n}}
 z_1^{p^{b_1}}\cdots z_{m-1}^{p^{b_{m-1}}-1}[z_{m-1},y_k]+\\
 &&+\alpha y_1^{p^{a_1}} \cdots  y_n^{p^{a_n}} z_1^{p^{b_1}}\cdots z_{m-1}^{p^{b_{m-1}}}.
\end{eqnarray*}
Since $\widetilde{f}z_m \in C$, by Proposition \ref{propositioncomznadireita} we have that
$\widetilde{f}z_m \in (I+ 
\langle z_1z_2 \rangle^{TS(*)}+\langle  y_1^p \rangle^{TS(*)})$. Therefore
\[f \in (I+ 
\langle z_1z_2 \rangle^{TS(*)}+\langle  y_1^p \rangle^{TS(*)}).\]

\

Case 5. $m\geq 1$ and $b_m \geq 1$.

By Lemma \ref{lemavariavcomutam},  
\[z_{m}^{p^{b_{m}}-1}[z_m,y_k]+I=-z_{m}^{p^{b_{m}}-2}[z_m,y_k]z_m+I.\]
Thus $\overline{f}+I=\widetilde{f}z_m+I$ where 
\begin{eqnarray*}
\widetilde{f}&=&
 -\sum_{k=1}^n \alpha_k y_1^{p^{a_1}}\cdots y_k^{p^{a_k}-1} \cdots  y_n^{p^{a_n}}
 z_1^{p^{b_1}}\cdots z_{m}^{p^{b_{m}}-2}[z_m,y_k]+\\
 &&+\alpha y_1^{p^{a_1}} \cdots  y_n^{p^{a_n}} z_1^{p^{b_1}}\cdots z_{m}^{p^{b_{m}}-1}.
\end{eqnarray*}
Since $\widetilde{f}z_m \in C$, by Proposition \ref{propositioncomznadireita} we have 
\[\widetilde{f}z_m \in (I+ 
\langle z_1z_2 \rangle^{TS(*)}+\langle  y_1^p \rangle^{TS(*)}).\]
Therefore
$f \in (I+ 
\langle z_1z_2 \rangle^{TS(*)}+\langle  y_1^p \rangle^{TS(*)}).$

By the five cases it follows that 
\[C \subseteq (I+ 
\langle z_1z_2 \rangle^{TS(*)}+\langle  y_1^p \rangle^{TS(*)}) \]
as desired. 
The proof is complete.
\end{proof}

\subsection{$C(UT_2(F),\star)$ when $F$ is a finite field}

Let $F$ be a finite field with $|F|=q$ elements and char$(F)=p \neq 2$. Since $( \ F-\{0\} \ , \ \cdot \ )$ is a group, we have $a^{q-1}=1$
for all $a\in F-\{0\}$. In particular,
\[a^q=a\]
for all $a\in F$. Hence, if 
$$Y=\begin{pmatrix}
a & b \\ 
0 & a
\end{pmatrix}$$
then 
\begin{equation}\label{potenciay}
Y^i=\begin{pmatrix}
a^i & ia^{i-1}b \\ 
0 & a^i
\end{pmatrix} \ , \ \ 
Y^q=\begin{pmatrix}
a^q & qa^{q-1}b \\ 
0 & a^q
\end{pmatrix}=
\begin{pmatrix}
a & 0 \\ 
0 & a
\end{pmatrix}
\end{equation}
for all $a,b \in F$ and $i\geq 1$.

The next lemma is direct consequence of \cite[Proposition 4.2.3]{drenskybook}. See \cite[Lemma 2.1]{ronalddimas} too.

\begin{lemma} \label{lemapolcomutident}
Let $F$ be a finite field with $|F|=q$. Let $f \in F \langle X \rangle$ be a polynomial given by
\[f(x_1,\ldots,x_n)=\sum_{d_1=0}^{q-1} \ldots \sum_{d_n=0}^{q-1}\alpha_{(d_1,\ldots,d_n)}x_1^{d_1}\cdots x_n^{d_n},\]
where $\alpha_{(d_1,\ldots,d_n)} \in F$. If $f$ is a polynomial identity for $F$ then 
$\alpha_{(d_1,\ldots,d_n)}=0$ for all $(d_1,\ldots,d_n)$.
\end{lemma}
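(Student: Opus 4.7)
The plan is to reduce to the one-variable case and then induct on the number of variables. Since $F$ is commutative, viewing $f$ as an element of the commutative polynomial ring $F[x_1,\ldots,x_n]$ loses no information (the monomials $x_1^{d_1}\cdots x_n^{d_n}$ are already written in a fixed order). So I would treat $f$ as a function $F^n \to F$ vanishing identically, and want to conclude that its coefficients—with each $d_i \leq q-1$—are all zero.

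First I would handle the base case $n=1$. Here $f(x_1) = \sum_{d_1=0}^{q-1} \alpha_{d_1} x_1^{d_1}$ is a polynomial of degree at most $q-1$ over the field $F$. If $f$ vanishes on every element of $F$, then $f$ has at least $q$ distinct roots, so by the usual bound on roots of a nonzero polynomial over a field it must be identically zero, forcing $\alpha_{d_1}=0$ for $d_1=0,\ldots,q-1$.

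For the inductive step, assume the statement holds for $n-1$ variables. Group $f$ by the power of $x_1$:
\[
f(x_1,\ldots,x_n) = \sum_{d_1=0}^{q-1} x_1^{d_1}\, g_{d_1}(x_2,\ldots,x_n),
\qquad
g_{d_1}(x_2,\ldots,x_n) = \sum_{d_2=0}^{q-1}\cdots\sum_{d_n=0}^{q-1} \alpha_{(d_1,\ldots,d_n)}\, x_2^{d_2}\cdots x_n^{d_n}.
\]
Fix arbitrary $a_2,\ldots,a_n \in F$. Then $f(x_1,a_2,\ldots,a_n) = \sum_{d_1=0}^{q-1} g_{d_1}(a_2,\ldots,a_n)\, x_1^{d_1}$ is a univariate polynomial of degree at most $q-1$ vanishing on all of $F$. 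By the base case, $g_{d_1}(a_2,\ldots,a_n)=0$ for every $d_1$. Since $(a_2,\ldots,a_n)\in F^{n-1}$ was arbitrary, each $g_{d_1}$ is itself a polynomial identity for $F$ satisfying the degree bound hypothesis. The induction hypothesis applied to each $g_{d_1}$ forces $\alpha_{(d_1,\ldots,d_n)}=0$ for all tuples, completing the induction.

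There is no real obstacle: the key point is simply the bound on the number of roots of a univariate polynomial over a field, which is what makes the degree restriction $d_i\leq q-1$ essential. (Equivalently, this is the standard fact that the evaluation map from the $q^n$-dimensional space of polynomials of degree $<q$ in each variable to functions $F^n\to F$ is an isomorphism, both sides having dimension $q^n$.)
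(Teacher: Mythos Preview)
Your proof is correct. The paper does not actually supply its own argument for this lemma; it simply states that the result is a direct consequence of \cite[Proposition~4.2.3]{drenskybook} (and also points to \cite[Lemma~2.1]{ronalddimas}). That cited proposition is the standard Vandermonde-type separation of homogeneous components, which rests on exactly the same fact you use: a univariate polynomial of degree at most $q-1$ over $F$ that vanishes on all $q$ elements of $F$ must be the zero polynomial. Your induction on the number of variables, peeling off one variable at a time via the root bound, is the most elementary and self-contained way to package this; it is equivalent in spirit to the cited approach and arguably cleaner for this specific statement, since it avoids invoking an external reference whose formulation is tailored to a slightly different setting (T-ideals and homogeneous components rather than vanishing on the ground field).
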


Let $\Lambda_n$ be the set of all elements $(s_1,\ldots,s_n) \in \mathbb{Z}^n$ such that: 
\begin{itemize}
\item[a)] $0\leq s_1,\ldots,s_n	 < 2q$,
\item[b)]If $s_i\geq q$ for some $i$, then $s_j<q$ for all $j\neq i$. 
\end{itemize} 

The next theorem was proved in \cite[Theorem 5.9]{ronalddimas}.
\begin{theorem}   \label{teoremaprimeirainvolucao}
 Let $F$ be a finite field with $|F|=q$ elements and char$(F)\neq 2$. Consider the involution $\star$ defined
 in (\ref{definicaoprimeirainvolucao}). Denote by $J$ the $T(*)$-ideal generated by the polynomials
 \begin{eqnarray*}
[y_1,y_2], \ \ [z_1,z_2], \ \  [y_1,z_1][y_2,z_2], \ \ z_1y_1z_2-z_2y_1z_1,  \\
(y_1^q-y_1)[z_1,y_2], \ \ (y_1^q-y_1)(y_2^q-y_2), \ \ z_1^q-z_1, \\
(z_1^{q-1}-1)[z_1,y_1], \ \ (y_{1}^q-y_{1})z_{1}-2^{-1}[z_{1},y_{1}] .
\end{eqnarray*}
Then $Id(UT_2(F),\star)=J$. Moreover, the quotient vector space $F\langle Y\cup Z \rangle /J$ 
 has a basis consisting of all polynomials of the form
 \[
 \left\{
\begin{array}{ll}
y_1^{s_1}\cdots y_n^{s_n}z_1^{r_1}\cdots z_m^{r_m-1}[z_m,y_k]+J, & 0\leq s_1,\ldots,s_n,r_1,\ldots,r_m< q,\ \ r_m\geq 1,\\
&n\geq 1, \ m\geq 1, \  k\geq 1;\\
y_1^{s_1}\cdots y_n^{s_n}z_1^{r_1}\cdots z_m^{r_m}+J, & 0\leq s_1,\ldots,s_n,r_1,\ldots,r_m< q, \ \ r_m\geq 1, \\
&n\geq 1, \ m\geq 1;\\
y_1^{s_1}\cdots y_n^{s_n}+J, & (s_1,\ldots,s_n)\in \Lambda_n, \ \ n\geq 1.
\end{array}
\right.
.\] 
\end{theorem}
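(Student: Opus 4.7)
The plan is to verify that each of the nine listed polynomials lies in $Id(UT_2(F),\star)$, then reduce the free algebra modulo the $T(*)$-ideal $J$ they generate to obtain the displayed spanning set, and finally establish linear independence of that spanning set modulo $Id(UT_2(F),\star)$. The first four polynomials are identities by Theorem \ref{teoremaidentidaprimeirainv}. For the remaining five I would use $a^q=a$ for every $a\in F$ together with formula (\ref{potenciay}): for $Y=aI+be_{12}\in UT_2(F)^{+}$ one has $Y^q-Y=-be_{12}$, and for $Z=c(e_{11}-e_{22})\in UT_2(F)^{-}$ one has $Z^q=Z$ and $Z^{q-1}=I$ whenever $c\neq 0$. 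These observations immediately yield $(y_1^q-y_1)(y_2^q-y_2)\in Id$ (product of two strictly upper triangular matrices), $(y_1^q-y_1)[z_1,y_2]\in Id$, $z_1^q-z_1\in Id$ and $(z_1^{q-1}-1)[z_1,y_1]\in Id$. The last identity $(y_1^q-y_1)z_1-2^{-1}[z_1,y_1]\in Id$ follows from the direct computation $(-be_{12})\cdot c(e_{11}-e_{22})=bc\,e_{12}=2^{-1}[Z,Y]$.

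With $J\subseteq Id(UT_2(F),\star)$ established, I would show that the listed monomials span $F\langle Y\cup Z\rangle/J$. The first four identities alone (which hold over any field) suffice, by the proof of Theorem \ref{teoremaidentidaprimeirainv}, to reduce any polynomial modulo $J$ to a linear combination of monomials of the form (\ref{geradoresquocientepori}). The finite-field identities then reduce exponents further: $z_1^q-z_1\in J$ brings every $z$-exponent below $q$; $(z_1^{q-1}-1)[z_1,y_1]\in J$ combined with Lemma \ref{lemavariavcomutam} controls the distinguished $z_m$ inside the bracket $[z_m,y_k]$; $(y_1^q-y_1)[z_1,y_2]\in J$ brings every $y$-exponent below $q$ whenever a bracket is present; in an unbracketed mixed monomial, $(y_1^q-y_1)z_1=2^{-1}[z_1,y_1]$ converts any $y$-factor of degree $\geq q$ into a bracketed monomial which is then reduced as above; and in the pure-$y$ case, $(y_1^q-y_1)(y_2^q-y_2)\in J$ prevents two distinct exponents from being $\geq q$ simultaneously, while the consequence $(y_1^q-y_1)^2\in J$ (obtained by the $*$-endomorphism $y_2\mapsto y_1$) forces each single exponent below $2q$; together these restrictions are precisely the definition of $\Lambda_n$.

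For linear independence, I would suppose a linear combination of the displayed monomials lies in $Id(UT_2(F),\star)$ and evaluate at generic $Y_i\mapsto a_iI+b_ie_{12}$ and $Z_j\mapsto c_j(e_{11}-e_{22})$ with parameters $a_i,b_i,c_j\in F$. Using (\ref{potenciay}), each basis monomial evaluates to a $2\times 2$ matrix whose entries are polynomials in the parameters; the $(1,2)$-entry is linear in the $b_i$'s with coefficient a monomial in the $a_i$'s and $c_j$'s of degree at most $2q-1$ in each $a_i$ and at most $q-1$ in each $c_j$. A Vandermonde-style argument based on Lemma \ref{lemapolcomutident} (applied coordinate by coordinate) then forces all coefficients to vanish, where the $\Lambda_n$ restriction is exactly what keeps the $a_i$-exponents below $2q$ so that the lemma remains applicable.

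The main obstacle I anticipate is the spanning reduction in the second step: the five finite-field identities interact in intricate ways, and converting a monomial with one high-degree $y_i$ into normal form requires combining $(y_1^q-y_1)z_1=2^{-1}[z_1,y_1]$ with Lemma \ref{lemavariavcomutam} while carefully tracking signs when the free $z_m$ is shuffled past the commutator $[z_m,y_k]$. Equally subtle is the identification of $\Lambda_n$ as the correct index set in the pure-$y$ family, where the exponent $s_i$ may reach $2q-1$ rather than only $q-1$ because no bracketed factor or $z$-variable is available to trigger further reduction.
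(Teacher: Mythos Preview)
The paper does not prove this theorem; it merely cites it from \cite[Theorem~5.9]{ronalddimas}. So there is no in-paper argument to compare against, and your outline is in effect a proposal for reconstructing that external proof.

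Your steps (1) and (2) are sound and follow the natural strategy. The verification that the nine generators are $\star$-identities is correct, and your spanning reduction is the right sequence of moves (you correctly flag the delicate interaction between $(y_1^q-y_1)z_1-2^{-1}[z_1,y_1]$ and Lemma~\ref{lemavariavcomutam}).

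There is, however, a genuine gap in step (3), specifically in the pure-$y$ family. You write that the $(1,2)$-entry has $a_i$-degree at most $2q-1$ and that ``the $\Lambda_n$ restriction is exactly what keeps the $a_i$-exponents below $2q$ so that the lemma remains applicable.'' But Lemma~\ref{lemapolcomutident} requires every exponent to be \emph{strictly below $q$}, not below $2q$; with exponents up to $2q-1$ the functions $a\mapsto a^s$ are no longer linearly independent over $F$ (indeed $a^q=a$, $a^{2q-1}=a$, etc.), so the $(1,2)$-entry alone cannot separate the $\Lambda_n$-monomials. What actually works is to use the $(1,1)$- and $(1,2)$-entries \emph{together}: from the diagonal one obtains relations of the form $\alpha_t+\alpha_{q+t-1}=0$ (after collapsing $a^{q+j}$ to $a^{j+1}$), and from the off-diagonal one obtains $u\,\alpha_u+(u-1)\,\alpha_{q+u-1}=0$; combining these two families of linear relations forces all $\alpha_s=0$. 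Your sketch needs this two-entry argument in place of a direct appeal to Lemma~\ref{lemapolcomutident}. For families 1 and 2 (where all $y$- and $z$-exponents are already below $q$) your single-entry approach is fine.
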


\begin{proposition}\label{polinomio central da finitud}
Let $F$ be a finite field with $|F| = q$ and char$(F) \neq 2$. 
Then 
\[l y_1 (y_2^{q+l-1} -  y_2^l) + y_1^q y_2^l\]
is a $*$-central polynomial for $ (UT_2(F),\star)$ for all $l\geq 0$.
\end{proposition}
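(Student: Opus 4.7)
The plan is to substitute arbitrary symmetric matrices into the polynomial and verify directly that the result is a scalar matrix. Since the polynomial involves only $y_1$ and $y_2$, both symmetric variables, I only need to check values on $UT_2(F)^+$. Writing $Y_1 = \begin{pmatrix} a_1 & b_1 \\ 0 & a_1 \end{pmatrix}$ and $Y_2 = \begin{pmatrix} a_2 & b_2 \\ 0 & a_2 \end{pmatrix}$ with $a_1,a_2,b_1,b_2 \in F$, I will use formula (\ref{potenciay}) to compute all the required powers of $Y_1$ and $Y_2$, and then show everything collapses to a scalar matrix.

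The crucial input is that by (\ref{potenciay}) the matrix $Y_2^q$ is the scalar matrix $a_2 I$. Thus $Y_2^{q+l-1} = Y_2^q Y_2^{l-1} = a_2 Y_2^{l-1}$, and a short computation (handling $l=1$ via the convention $Y_2^0=I$) gives
\[
Y_2^{q+l-1} - Y_2^l \;=\; \begin{pmatrix} 0 & -a_2^{l-1}b_2 \\ 0 & 0 \end{pmatrix}.
\]
Multiplying on the left by $l Y_1$ yields the strictly upper triangular matrix $\begin{pmatrix} 0 & -l a_1 a_2^{l-1} b_2 \\ 0 & 0 \end{pmatrix}$, while $Y_1^q Y_2^l = a_1 Y_2^l = \begin{pmatrix} a_1 a_2^l & l a_1 a_2^{l-1} b_2 \\ 0 & a_1 a_2^l \end{pmatrix}$. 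The two $(1,2)$-entries cancel precisely, so the sum equals $a_1 a_2^l \, I \in Z(UT_2(F))$.

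The cases $l=0$ and $l=1$ deserve a brief comment: for $l=0$ the polynomial reduces to $y_1^q$, which is central by (\ref{potenciay}) directly; for $l=1$ the computation above still applies with the understanding that $(l-1)a_2^{l-2}b_2 = 0$, so no negative powers of $a_2$ actually appear. This covers all $l \geq 0$.

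There is no real obstacle here: the whole statement is a one-paragraph computation whose only subtlety is recognizing that the term $-l y_1 y_2^l$ is introduced precisely to kill the off-diagonal contribution of $y_1 y_2^{q+l-1}$ coming from the nonzero $b_2$-part of $Y_2^{l-1}$. Once one writes $Y_2^{q+l-1}=a_2 Y_2^{l-1}$ and expands, the cancellation is immediate and leaves a scalar multiple of the identity.
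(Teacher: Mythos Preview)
Your proof is correct and follows essentially the same direct computation as the paper's own proof: both substitute generic symmetric matrices $Y_i=\begin{pmatrix}a_i&b_i\\0&a_i\end{pmatrix}$, use (\ref{potenciay}) to compute $Y_2^{q+l-1}-Y_2^l=\begin{pmatrix}0&-a_2^{l-1}b_2\\0&0\end{pmatrix}$ and $Y_1^qY_2^l$, and observe that the $(1,2)$-entries cancel to leave the scalar matrix $a_1a_2^l\,I$. Your explicit handling of the boundary cases $l=0,1$ is a small addition beyond the paper's presentation, but the argument is otherwise identical.
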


\begin{proof}
Denote $f(y_1,y_2)=l y_1 (y_2^{q+l-1} -  y_2^l) + y_1^q y_2^l$ and consider  
\[Y_i = 
\begin{pmatrix}
a_i & b_i \\
0 & a_i
\end{pmatrix},\]
where $a_i,b_i \in F$, $i=1,2$. By (\ref{potenciay}), we have 
\[ Y_2^l = \begin{pmatrix}
a_2^l & la_2^{l-1}b_2 \\
0 & a_2^l
\end{pmatrix} \ \mbox{and} \
Y_2^{q + l - 1} = \begin{pmatrix}
a_2^l & (l-1)a_2^{l-1}b_2 \\
0 & a_2^l
\end{pmatrix}.\] Thus
\[ Y_2^{q+l-1} -  Y_2^l =
\begin{pmatrix}
0 & - a_2^{l-1}b_2 \\
0 & 0
\end{pmatrix} \ \mbox{and} \ 
l Y_1 (Y_2^{q+l-1} -  Y_2^l) =
\begin{pmatrix}
0 & - l a_1 a_2^{l-1}b_2 \\
0 & 0
\end{pmatrix}.\]
By (\ref{potenciay}), we have 
\[ Y_1^q Y_2^l = 
\begin{pmatrix}
a_1 & 0 \\
0 & a_1
\end{pmatrix} 
\begin{pmatrix}
a_2^l & l a_2^{l-1} b_2 \\
0 & a_2^l
\end{pmatrix} =
\begin{pmatrix}
a_1 a_2^l & l a_1 a_2^{l-1} b_2 \\
0 & a_1 a_2^l
\end{pmatrix}.\] 
Therefore 
\[f(Y_1,Y_2)= 
 \begin{pmatrix}
a_1a_2^l & 0 \\
0 & a_1a_2^l
\end{pmatrix} \in Z(UT_2(F))
\]
as desired. The proof is complete.
\end{proof}

\

Denote by $V$ the following $T(*)$-space:
\begin{equation}
V = \left\langle l y_1 (y_2^{q+l-1} -  y_2^l) + y_1^q y_2^l: \ l \geq 0 \right\rangle ^{TS(*)} .
\end{equation}

By Proposition \ref{polinomio central da finitud}, we obtain  
\[V+Id(UT_2(F),\star) \subseteq C(UT_2(F),\star).\]

Since char$(F)=p$, if $l=kp$ and $y_1=1$ then $
l y_1 (y_2^{q+l-1} -  y_2^l) + y_1^q y_2^l= y_2^{kp}.$
Hence 
\begin{equation} \label{potenciadeyemv}
 y_2^{kp} \in V
\end{equation}
for all $k\geq 0$.

From now on we write
\[f \equiv g \Longleftrightarrow f+V+Id(UT_2(F),\star)=g+V+Id(UT_2(F),\star).\]

\begin{lemma}\label{lema general de l}
Let $F$ be a finite field with $|F| = q$ and char$(F) \neq 2$. 
If $l, n\geq 0$ then 
\[ 
\left( l y_1\cdots y_n (y_{n+1}^{q+l-1} - y_{n+1}^l) +  y_1^q \cdots y_n^q y_{n+1}^l \right) \equiv 0 .\]
\end{lemma}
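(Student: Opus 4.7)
The plan is to realize the stated polynomial, modulo $Id(UT_2(F),\star)$, as the image of the generator $l y_1 (y_2^{q+l-1} - y_2^l) + y_1^q y_2^l$ of $V$ under a suitable $*$-endomorphism of $F\langle Y \cup Z\rangle$.

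To this end, I introduce the symmetric element
\[
y \;:=\; \tfrac{1}{2}\bigl(y_1 y_2 \cdots y_n + y_n y_{n-1} \cdots y_1\bigr) \;\in\; F\langle Y \cup Z\rangle^{+}.
\]
Since $V$ is a $T(*)$-space, the substitution $y_1 \mapsto y$, $y_2 \mapsto y_{n+1}$ (both symmetric) in the $l$-th generator produces an element
\[
G \;:=\; l\, y\,(y_{n+1}^{q+l-1} - y_{n+1}^l) + y^q\, y_{n+1}^l \;\in\; V.
\]

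Next I reduce $G$ modulo $I := Id(UT_2(F),\star)$. By Theorem \ref{teoremaprimeirainvolucao} we have $[y_i,y_j] \in I$, so the $y$-variables commute pairwise modulo $I$; in particular $y_n \cdots y_1 \equiv y_1 \cdots y_n$ and therefore $y \equiv y_1 y_2 \cdots y_n \pmod{I}$. The heart of the argument is the identity
\[
y^q \;\equiv\; y_1^q y_2^q \cdots y_n^q \pmod{I}.
\]
To prove it, I write $y = y_1 \cdots y_n + r$ with $r \in I$. Since $I$ is a two-sided ideal, every monomial in the noncommutative expansion of $(y_1 \cdots y_n + r)^q$ containing at least one factor $r$ lies in $I$, leaving $y^q \equiv (y_1 \cdots y_n)^q \pmod{I}$. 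Iterated application of $y_i y_j \equiv y_j y_i \pmod{I}$ then sorts the $nq$ letters of this monomial to yield $y_1^q y_2^q \cdots y_n^q$ modulo $I$.

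Substituting these reductions into $G$ gives
\[
G \;\equiv\; l\, y_1 \cdots y_n\,(y_{n+1}^{q+l-1} - y_{n+1}^l) + y_1^q \cdots y_n^q\, y_{n+1}^l \pmod{I},
\]
and since $G \in V$, the right-hand side lies in $V + I$, which is precisely the claim $\equiv 0$. The $n=0$ case is handled uniformly, since then $y = 1$ and both sides collapse to $l(y_{n+1}^{q+l-1} - y_{n+1}^l) + y_{n+1}^l$, directly an instance of the generator. I expect no serious obstacle: the only slightly delicate step is the handling of $y^q$, which is routine once one observes that $I$ is an ideal and the $y$'s commute modulo $I$.
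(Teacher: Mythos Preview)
Your proposal is correct and follows essentially the same approach as the paper: both introduce the symmetrized element $u = \tfrac{1}{2}(y_1\cdots y_n + y_n\cdots y_1)$, substitute it into the generator of $V$, and then use $[y_i,y_j]\in Id(UT_2(F),\star)$ to reduce $u$ and $u^q$ modulo the identity ideal. Your write-up simply gives a bit more detail on why $u^q \equiv y_1^q\cdots y_n^q$ and folds the $n=0$ case into the general argument rather than treating it separately.
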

\begin{proof}
The case $n=0$ is consequence of Proposition \ref{polinomio central da finitud}. In fact, substituting in
\[l y_1 (y_2^{q+l-1} -  y_2^l) + y_1^q y_2^l\]
the variable $y_1$ by $1$, we will have $l(y_2^{q+l-1} -  y_2^l) + y_2^l \equiv 0$.

Suppose  $n\geq 1$. Denote $u =(1/2)(y_1 y_2\cdots y_n + y_n \cdots y_2 y_1)$ and $J=Id(UT_2(F),\star)$. 
Since $u$ is a symmetric polynomial it follows that 
\begin{equation}\label{aaa1}
v= lu(y_{n+1}^{q+l-1} - y_{n+1}^l) +  u^q  y_{n+1}^l \in  V. 
\end{equation}
Since $[y_i,y_j] \in J$ (see Theorem \ref{teoremaprimeirainvolucao}), we have 
\[y_iy_j+J= y_jy_i+J.\]  
Thus $u + J = y_1\cdots y_n + J$ and
$u^q + J =  y_1^q \cdots y_n^q + J.$
Hence 
\[v+J= l y_1\cdots y_n (y_{n+1}^{q+l-1} - y_{n+1}^l) + y_1^q \cdots y_n^q y_{n+1}^l + J . \]
Now we use (\ref{aaa1}) to finish the proof.
\end{proof}

\begin{corollary}\label{diminui potencia de y}
Let $F$ be a finite field with $|F| = q$ and char$(F)=p \neq 2$. 
If $ f(y_1,\ldots,y_n) \in F \left\langle  Y \cup Z \right\rangle $ and $p \nmid l$ then there exists 
$g(y_1,\ldots,y_n) \in F \left\langle  Y \cup Z \right\rangle$ such that
\[ f y_{n+1}^{q+l-1} \equiv  g y_{n+1}^l .\]  
\end{corollary}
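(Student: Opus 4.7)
The plan is to reduce the claim to the case where $f$ is a monomial in the $y_i$'s and then apply Lemma \ref{lema general de l} via a symmetric substitution, using the invertibility of $l$ mod $p$ at the end.

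First, since $[y_i, y_j] \in Id(UT_2(F),\star)$ by Theorem \ref{teoremaprimeirainvolucao}, modulo $Id(UT_2(F),\star)$ any polynomial in the commuting symmetric variables $y_1, \ldots, y_n$ can be rewritten as a linear combination of monomials $y_1^{s_1}\cdots y_n^{s_n}$. Thus, writing $f \equiv \sum_{\bar s} \alpha_{\bar s}\, y_1^{s_1}\cdots y_n^{s_n}$ and using linearity of the relation $\equiv$, it suffices to prove the corollary for $f = y_1^{s_1}\cdots y_n^{s_n}$.

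Next I apply Lemma \ref{lema general de l} after a $*$-substitution. Since each $y_i$ is symmetric, the monomial $y_i^{s_i}$ is symmetric too, so replacing $y_i$ by $y_i^{s_i}$ for $i=1,\ldots,n$ in the identity
\[
l\, y_1\cdots y_n\, (y_{n+1}^{q+l-1} - y_{n+1}^l) + y_1^q\cdots y_n^q\, y_{n+1}^l \equiv 0
\]
is a legitimate $*$-endomorphism and preserves $V + Id(UT_2(F),\star)$. This yields
\[
l\, y_1^{s_1}\cdots y_n^{s_n}\,(y_{n+1}^{q+l-1} - y_{n+1}^l) + y_1^{qs_1}\cdots y_n^{qs_n}\, y_{n+1}^l \equiv 0.
\]

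Finally, since $p \nmid l$, the scalar $l$ is invertible in $F$; multiplying by $l^{-1}$ and rearranging gives
\[
y_1^{s_1}\cdots y_n^{s_n}\, y_{n+1}^{q+l-1} \equiv \bigl(y_1^{s_1}\cdots y_n^{s_n} - l^{-1} y_1^{qs_1}\cdots y_n^{qs_n}\bigr)\, y_{n+1}^l.
\]
Setting $g_{\bar s} = y_1^{s_1}\cdots y_n^{s_n} - l^{-1} y_1^{qs_1}\cdots y_n^{qs_n}$ and $g = \sum_{\bar s}\alpha_{\bar s}\, g_{\bar s}$ produces the required polynomial $g(y_1, \ldots, y_n)$ with $f\, y_{n+1}^{q+l-1} \equiv g\, y_{n+1}^l$. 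No real obstacle arises here; the only thing to be careful about is that the substitution used in step two preserves the $T(*)$-space $V$, which is automatic because the substituted elements $y_i^{s_i}$ are all symmetric.
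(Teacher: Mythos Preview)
Your proof is correct and follows essentially the same route as the paper's: apply Lemma~\ref{lema general de l} to each monomial appearing in $f$ and use that $l$ is invertible in $F$. The only cosmetic difference is that the paper skips your first normalization step: instead of rewriting $f$ modulo $Id(UT_2(F),\star)$ as a combination of ordered monomials $y_1^{s_1}\cdots y_n^{s_n}$ and then substituting $y_i\mapsto y_i^{s_i}$, it simply observes that $f$ is already a linear combination of words $y_{i_1}\cdots y_{i_m}$ in the free algebra and applies the lemma via the substitution $y_j\mapsto y_{i_j}$.
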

\begin{proof}
By Lemma \ref{lema general de l}, we have
\[ 
y_1\cdots y_n y_{n+1}^{q+l-1}\equiv 
(y_1\cdots y_n -(l^{-1}) y_1^q \cdots y_n^q )y_{n+1}^l.\]
Since $f y_{n+1}^{q+l-1}$ is a linear combination of polynomials
\[y_{i_1}\cdots y_{i_m} y_{n+1}^{q+l-1},\]
we finish the proof.
\end{proof} 

\begin{proposition}\label{uma variavel y}
Let $F$ be a finite field with $|F| = q$ and char$(F)=p \neq 2$. 
Consider
\[f = \sum_{i=0}^{2q-1} \alpha_i y_1^i,\]
where $\alpha_i \in F$ for all $i$. If $f \in C(UT_2(F), \star)$ then 
$ f \equiv 0 $.
\end{proposition}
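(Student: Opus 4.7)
The plan is to extract explicit linear relations on the coefficients $\alpha_i$ from the centrality hypothesis, and then absorb each surviving term into $V+Id(UT_2(F),\star)$ via Lemma \ref{lema general de l} and (\ref{potenciadeyemv}).

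First I would substitute a generic symmetric element $Y_1=\left(\begin{smallmatrix}a&b\\0&a\end{smallmatrix}\right)$ with $a,b\in F$ and use (\ref{potenciay}) to compute $f(Y_1)$. Its off-diagonal entry equals $b\sum_{i=1}^{2q-1}i\alpha_i a^{i-1}$, so the assumption $f\in C(UT_2(F),\star)$ is equivalent to the polynomial $h(t):=\sum_{i=1}^{2q-1}i\alpha_i t^{i-1}\in F[t]$ vanishing at every element of $F$; since $|F|=q$, this forces $t^q-t\mid h(t)$. Reducing $h(t)$ modulo $t^q-t$ via $t^{q+j}\equiv t^{j+1}$ for $0\leq j\leq q-2$, equating coefficients to zero, and using that $q=0$ in $F$ so that $k+q=k$ in $F$, I obtain $\alpha_1=0$ together with the relation $k\,\alpha_{k+q}+(k+1)\,\alpha_{k+1}=0$ in $F$ for each $k\in\{1,\ldots,q-1\}$. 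Splitting by whether $p\mid k$, this yields either $\alpha_{k+1}=0$ (since $k+1\ne 0$ in $F$ when $p\mid k$) or $\alpha_{k+q}=-\tfrac{k+1}{k}\alpha_{k+1}$.

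Using these relations I would regroup
\[f=\alpha_0+\sum_{k\in T_1}\alpha_{k+1}\Bigl(y_1^{k+1}-\tfrac{k+1}{k}\,y_1^{k+q}\Bigr)+\sum_{k\in T_2}\alpha_{k+q}\,y_1^{k+q},\]
where $T_1=\{k\in\{1,\ldots,q-1\}:p\nmid k\}$ and $T_2=\{k\in\{1,\ldots,q-1\}:p\mid k\}$, and then verify each summand belongs to $V+Id(UT_2(F),\star)$. The constant term is absorbed because $1\in V$ by (\ref{potenciadeyemv}) with $k=0$. For $k\in T_1$, Lemma \ref{lema general de l} applied with $n=0$ and $l=k+1$ gives $(k+1)\,y_1^{q+k}-k\,y_1^{k+1}\equiv 0$; dividing by the unit $k$ in $F$ yields precisely $y_1^{k+1}-\tfrac{k+1}{k}y_1^{k+q}\equiv 0$. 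For $k\in T_2$ the exponent $k+q$ is a multiple of $p$ (since both $k$ and $q=p^m$ are), so $y_1^{k+q}\in V$ by (\ref{potenciadeyemv}). Summing over all terms gives $f\equiv 0$.

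The main obstacle is purely combinatorial bookkeeping: one must verify that the dichotomy $p\mid k$ versus $p\nmid k$ produced by the reduction modulo $t^q-t$ aligns exactly with the dichotomy governing invertibility of the leading coefficient in Lemma \ref{lema general de l}, so that every free parameter left by the centrality conditions is absorbed by exactly one available generator of $V$, with none left over. Once this alignment is observed, no further ingredients beyond Lemma \ref{lema general de l} and (\ref{potenciadeyemv}) are needed.
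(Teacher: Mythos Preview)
Your proof is correct. It uses the same ingredients as the paper (the substitution $Y_1=\left(\begin{smallmatrix}a&b\\0&a\end{smallmatrix}\right)$, Lemma~\ref{lema general de l} with $n=0$, and (\ref{potenciadeyemv})) but organizes them in the opposite order. The paper first rewrites $f$ modulo $V+J$ into a reduced form $h$ via Corollary~\ref{diminui potencia de y} and (\ref{potenciadeyemv}), and only then invokes centrality together with Lemma~\ref{lemapolcomutident} to force $h=0$. You instead extract the full system of linear constraints on the $\alpha_i$ from centrality right away (via the divisibility $t^q-t\mid h(t)$), and then check that every surviving summand is already absorbed by $V+J$. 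Your route is slightly more direct for the one-variable case and makes the matching between the centrality constraints and the generators of $V$ very explicit; the paper's ``reduce first, then kill'' strategy has the advantage that it sets up exactly the template reused in the multivariable Proposition~\ref{com variavel y}, where Corollary~\ref{diminui potencia de y} and the reduction step are what make the induction go through.
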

\begin{proof}
Let $g$ be given by 
\[g = \sum_{i=q}^{2q-1}  \alpha_i y_1^i = \sum_{i=1}^{q} \alpha_{q+i-1} y_1^{q+i-1} 
= \sum_{\substack{i=1 \\ p|i}}^q \alpha_{q+i-1} y_1^{q+i-1}
+ \sum_{\substack{i=1 \\ p\nmid i}}^q \alpha_{q+i-1} y_1^{q+i-1}. \] 
By Corollary \ref{diminui potencia de y}, we have 
\[ g \equiv \sum_{\substack{i=1 \\ p|i}}^q \alpha_{q+i-1} y_1^{q+i-1} + 
\sum_{\substack{i=1 \\ p\nmid i}}^q \beta_i y_1^i, \]
for some $\beta_i \in F$.
Thus, there exist $ \gamma_i \in F$ such that
\[ f \equiv \sum_{\substack{i=1 \\ p|i}}^q \alpha_{q+i-1} y_1^{q+i-1} + 
\sum_{i=0}^{q-1} \gamma_i y_1^i. \] 
By (\ref{potenciadeyemv}), we obtain
\begin{equation} \label{aaa2}
f \equiv  
\underbrace{\sum_{\substack{i=1 \\ p|i}}^q \alpha_{q+i-1} y_1^{q+i-1}
 + \sum_{\substack{i=1 \\ p \nmid i}}^{q-1} \gamma_i y_1^i}_h. 
 \end{equation}
It follows from Proposition \ref{polinomio central da finitud} that
\begin{equation} \label{aaa3}
C(UT_2(F),\star)\supseteq V+Id(UT_2(F),\star).
\end{equation}
Since $f\in C(UT_2(F), \star ) $,  by (\ref{aaa2}) and (\ref{aaa3}) we have that 
$h\in C(UT_2(F), \star ) $ where
 \begin{equation*}\label{equacao1 polinomio central da finitud}
h = \sum_{\substack{i=1 \\ p|i}}^q \alpha_{q+i-1} y_1^{q+i-1} + 
\sum_{\substack{i=1 \\ p \nmid i}}^{q-1} \gamma_i y_1^i.
 \end{equation*} 
If $a\in F$ and 
\[ Y = 
 \begin{pmatrix}
 a & 1 \\
 0 & a
 \end{pmatrix},\]
 then by (\ref{potenciay}), 
 \[h(Y)=
  \begin{pmatrix}
  h(a)& \left[\displaystyle  \sum_{\substack{i=1 \\ p|i}}^q \alpha_{q+i-1}(-1) a^{i-1}+\sum_{\substack{i=1 \\ p \nmid i}}^{q-1} \gamma_i ia^{i-1} \right]\\
 0 & h(a)
 \end{pmatrix}
 \]
Since  $ h(Y) \in Z(UT_2(F)) $  it follows that 
 \[\displaystyle  \sum_{\substack{i=1 \\ p|i}}^q \alpha_{q+i-1}(-1) a^{i-1}+\sum_{\substack{i=1 \\ p \nmid i}}^{q-1} \gamma_i ia^{i-1} = 0\]
for all $a\in F$. By Lemma \ref{lemapolcomutident}, we have
\[
 \left\{
\begin{array}{lll}
 \alpha_{q+i-1}(-1)=0,&1\leq i \leq q,& p\mid i \ ; \\
 \gamma_i i=0,&1\leq i \leq q-1,& p \nmid i.
\end{array}
\right. 
\]
Thus $h=0$ and $f \equiv 0$ as desired. 
\end{proof}

\begin{lemma}\label{potencia pq}
Let $F$ be a finite field with $|F| = q$ and char$(F)=p \neq 2$. 
Then $ y_1^{pq} - y_1^p \in Id(UT_2(F),\star) $.
\end{lemma}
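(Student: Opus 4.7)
The plan is to verify directly that $y_1^{pq}-y_1^p$ vanishes on every admissible substitution. Since $y_1$ is a symmetric variable, it suffices to show that $Y^{pq}-Y^p=0$ for every $Y\in UT_2(F)^{+}$. As noted at the beginning of the section, $(UT_2(F))^{+}$ is spanned by $e_{11}+e_{22}$ and $e_{12}$, so every symmetric matrix has the form $Y=\begin{pmatrix} a & b \\ 0 & a \end{pmatrix}$ with $a,b\in F$, exactly the shape considered in formula (\ref{potenciay}).

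I would then apply (\ref{potenciay}) with $i=p$. The upper-right entry is $pa^{p-1}b$, which is zero because $\mathrm{char}(F)=p$, so $Y^p=\begin{pmatrix} a^p & 0 \\ 0 & a^p \end{pmatrix}=a^p(e_{11}+e_{22})$. In particular $Y^p$ is a scalar matrix, so $Y^{pq}=(Y^p)^q=a^{pq}(e_{11}+e_{22})$. Finally, using that $|F|=q$ forces $a^q=a$ for all $a\in F$, one obtains $a^{pq}=(a^q)^p=a^p$, hence $Y^{pq}=Y^p$.

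There is no real obstacle here; the entire argument consists of two ingredients already isolated in the paper, namely the matrix power formula (\ref{potenciay}) and the identity $a^q=a$ on $F$. Once both are invoked, the equality $Y^{pq}-Y^p=0$ is immediate, which is precisely the statement that $y_1^{pq}-y_1^p\in Id(UT_2(F),\star)$.
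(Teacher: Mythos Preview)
Your proof is correct and follows essentially the same direct-computation approach as the paper: the paper computes $Y^{pq}=(Y^q)^p$ using that $Y^q$ is scalar by (\ref{potenciay}), while you compute $Y^{pq}=(Y^p)^q$ using that $Y^p$ is scalar since $pa^{p-1}b=0$; both routes are equivalent one-line verifications.
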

\begin{proof}
If $a,b \in F$ and $ Y = 
\begin{pmatrix}
a & b \\
0 & a
\end{pmatrix},$
then by (\ref{potenciay}),
\[ Y^{pq} =(Y^q)^p= 
\begin{pmatrix}
a^p & 0 \\
0 & a^p
\end{pmatrix} = Y^p. \] 
Thus $ Y^{pq} - Y^p=0$ as desired.
\end{proof}

\begin{lemma}\label{adicionando ps} 
Let $F$ be a finite field with $|F| = q$ and char$(F)=p \neq 2$. 
If $ i\geq 0 $ then
\[\left(  i y_1 (y_2^{q+i-1} - y_2^i)  + y_1^q y_2^i\right)  y_3^p \equiv 0. \]
\end{lemma}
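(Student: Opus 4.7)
My plan is to realize the target polynomial as the reduction modulo $J := Id(UT_2(F),\star)$ of a single substitution instance in the defining generator $iy_1(y_2^{q+i-1}-y_2^i) + y_1^q y_2^i$ of $V$, exploiting that $V$ is a $T(*)$-space (hence closed under substituting symmetric polynomials for $y$-variables) together with the congruence $y_3^{pq} \equiv y_3^p \pmod{J}$ from Lemma \ref{potencia pq}. The obstruction to be overcome is that the parenthesized factor is already in $V$, but $V+J$ is only a $T(*)$-space and not a $T(*)$-ideal, so one cannot merely multiply by $y_3^p$ on the right and conclude.

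First, I would build a symmetric polynomial $u$ whose image in $F\langle Y \cup Z \rangle /J$ coincides with that of $y_1 y_3^p$. Since $(y_3^p)^* = y_3^p$, the explicit symmetrization
\[u := \frac{1}{2}\bigl(y_1 y_3^p + y_3^p y_1\bigr)\]
satisfies $u^* = u$, and the identity $[y_1,y_3]\in J$ from Theorem \ref{teoremaprimeirainvolucao} gives $u \equiv y_1 y_3^p \pmod{J}$. Substituting $y_1 \mapsto u$ in the generator with $l = i$ produces an element $v := iu(y_2^{q+i-1}-y_2^i) + u^q y_2^i \in V$.

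Second, I would reduce $v$ modulo $J$. Because the $y_i$'s commute pairwise modulo $J$, I obtain $u^q \equiv (y_1 y_3^p)^q \equiv y_1^q y_3^{pq} \pmod{J}$, and Lemma \ref{potencia pq} then replaces $y_3^{pq}$ by $y_3^p$. A further reordering using commutativity modulo $J$ yields
\[v \equiv iy_1(y_2^{q+i-1}-y_2^i)\,y_3^p + y_1^q y_2^i\,y_3^p = \bigl(iy_1(y_2^{q+i-1}-y_2^i) + y_1^q y_2^i\bigr) y_3^p \pmod{J},\]
so the right-hand side lies in $V + J$, which is the desired equivalence.

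The only non-obvious ingredient is the choice of substitution: one has to notice that placing $y_1 y_3^p$ into the $y_1$-slot is exactly what is needed, since the $y_1^q$ factor in the generator will then carry a $y_3^{pq}$ that Lemma \ref{potencia pq} collapses to $y_3^p$, producing the correct trailing factor on both terms simultaneously. Once this observation is made, everything else is routine bookkeeping modulo $J$ based on the commutativity of the $y_i$'s.
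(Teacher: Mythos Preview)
Your proposal is correct and follows essentially the same approach as the paper: both define the symmetric element $u=\tfrac{1}{2}(y_1y_3^p+y_3^py_1)$, observe that substituting $y_1\mapsto u$ in the generator yields an element of $V$, and then reduce modulo $J$ using the commutativity $[y_i,y_j]\in J$ together with Lemma~\ref{potencia pq} to identify the result with $(iy_1(y_2^{q+i-1}-y_2^i)+y_1^qy_2^i)y_3^p$.
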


\begin{proof}
Denote $ u = (1/2) (y_1 y_3^p + y_3^p y_1)$ and $J=Id(UT_2(F),\star)$. Since $ u $ is a symmetric polynomial, we have
\begin{equation}\label{adicionando ps ecuacao}
i u (y_2^{q+i-1} - y_2^i) + u^q y_2^i \in V.
\end{equation}
Since $y_i y_j + J = y_j y_i +J$ it follows that 
\begin{equation}\label{aaa4}
 u + J = y_1 y_3^p + J.
\end{equation}
Thus, by Lemma \ref{potencia pq}, 
\begin{equation}\label{aaa5}
 u^q + J = y_1^q y_3^{pq} + J = y_1^q y_3^p + J.
\end{equation}
 Now we use (\ref{aaa4}) and (\ref{aaa5}) to obtain  
\begin{align}
 i u (y_2^{q+i-1} - y_2^i) + u^q y_2^i + J &= i y_1 y_3^p (y_2^{q+i-1} - y_2^i) + y_1^q y_3^p y_2^i + J  \nonumber \\
 &= i y_1 (y_2^{q+i-1} - y_2^i) y_3^p + y_1^q y_2^i y_3^p + J  \nonumber  \\
 &=\left(  i y_1 (y_2^{q+i-1} - y_2^i)  + y_1^q y_2^i\right)  y_3^p+J. \label{aaa6}
\end{align}
By (\ref{adicionando ps ecuacao}) and (\ref{aaa6}) it follows that
$\left(  i y_1 (y_2^{q+i-1} - y_2^i)  + y_1^q y_2^i\right)  y_3^p \equiv 0$ as desired.
\end{proof}

\begin{corollary}\label{f com y p}
Let $F$ be a finite field with $|F| = q$ and char$(F)=p \neq 2$. 
If $ f(y_1,\ldots,y_n) \equiv 0$ then $f(y_1,\ldots,y_n) y_{n+1}^{lp} \equiv 0$ for all $l\geq 0$.
\end{corollary}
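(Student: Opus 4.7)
The plan is to reduce the statement to the concrete content of Lemma~\ref{adicionando ps} and then iterate. Write $J = Id(UT_2(F),\star)$ for brevity, so that $f \equiv 0$ means $f \in V + J$. I would first write $f = v + j$ with $v \in V$ and $j \in J$; then $f y_{n+1}^{lp} = v y_{n+1}^{lp} + j y_{n+1}^{lp}$, and since $J$ is a $T(*)$-ideal the second summand automatically lies in $J$. So the task reduces to showing $v y_{n+1}^{lp} \in V + J$ for every $v \in V$.

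The key observation is that $V + J$ is closed under right multiplication by $y_{n+1}^p$. Every element of $V$ is a linear combination of polynomials of the form $g_i(u_1,u_2) = iu_1(u_2^{q+i-1} - u_2^i) + u_1^q u_2^i$ with $u_1,u_2 \in F\langle Y \cup Z\rangle^{+}$, by the description of the $T(*)$-space generated by $\{iy_1(y_2^{q+i-1}-y_2^i) + y_1^q y_2^i : i \ge 0\}$ recalled just before Lemma~\ref{lema general de l}. Lemma~\ref{adicionando ps} states precisely that the polynomial $\left(iy_1(y_2^{q+i-1}-y_2^i) + y_1^q y_2^i\right) y_3^p$ belongs to $V+J$. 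Applying the $*$-endomorphism defined on the free algebra by $y_1 \mapsto u_1$, $y_2 \mapsto u_2$, $y_3 \mapsto y_{n+1}$ (all three targets being symmetric, so this is a well-defined $*$-endomorphism), and using that $V+J$ is a $T(*)$-space, we obtain $g_i(u_1,u_2) y_{n+1}^p \in V + J$. Linearity then gives $v y_{n+1}^p \in V + J$.

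With this closure property in hand, I would finish by induction on $l$. The case $l = 0$ is trivial since $y_{n+1}^0 = 1$. For the inductive step, write $y_{n+1}^{(l+1)p} = y_{n+1}^{lp} \cdot y_{n+1}^p$; by the inductive hypothesis $f y_{n+1}^{lp} \in V+J$, and the previous paragraph shows that right multiplication by $y_{n+1}^p$ sends $V+J$ into itself, so $f y_{n+1}^{(l+1)p} \in V+J$.

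The only real content is the translation from Lemma~\ref{adicionando ps}, which is a statement about a single generator of $V$, to a statement about every element of $V$; this is the place where one must use the $T(*)$-space closure of $V+J$ rather than, say, trying to invoke a ring-theoretic closure property that does not hold. Once that observation is made, the corollary is a one-line induction, so I do not anticipate any genuine obstacle beyond taking care with the bookkeeping of symmetric versus arbitrary substitutions.
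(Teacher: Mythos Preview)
Your proof is correct and follows essentially the same approach as the paper: reduce to $f\in V$, express $f$ as a linear combination of the generators $g_i(u_1,u_2)$ with symmetric $u_1,u_2$, and apply Lemma~\ref{adicionando ps} together with the $T(*)$-space closure of $V+J$. The only cosmetic difference is in the passage from $y_{n+1}^p$ to $y_{n+1}^{lp}$: you induct on $l$ using the closure of $V+J$ under right multiplication by $y_{n+1}^p$, whereas the paper obtains the general case in one stroke by substituting $y_{n+1}\mapsto y_{n+1}^{\,l}$ (a symmetric element) into the already-established relation $f\,y_{n+1}^p\equiv 0$.
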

\begin{proof}
Write $f=f_V+f_I$, where $f_V \in V$ and $f_I \in Id(UT_2(F),\star)$. Since 
\[fy_{n+1}^{lp}=f_Vy_{n+1}^{lp}+f_Iy_{n+1}^{lp} \equiv f_Vy_{n+1}^{lp}\]
we can suppose $f(y_1,\ldots,y_n) \in V $. In this case, $f$ is a linear combination of polynomials
\[ i g_1 (g_2^{q+i-1} - g_2^i) + g_1^q g_2^i, \]  
where $ g_1 , g_2 \in F \left\langle Y \cup Z \right\rangle ^+  $ and  $ i\geq 0 $. 
By Lemma \ref{adicionando ps} we have
\[ \left( i g_1 (g_2^{q+i-1} - g_2^i) + g_1^q g_2^i \right) y_{n+1}^p \equiv 0.\]  
Thus $f(y_1,\ldots,y_n) y_{n+1}^p \equiv 0$. Since $V+Id(UT_2(F),\star)$ is a $T(*)$-space we have
$f(y_1,\ldots,y_n) y_{n+1}^{lp}=f(y_1,\ldots,y_n) (y_{n+1}^{l})^p \equiv 0$ for all $l\geq 0$.
\end{proof}

\begin{proposition}\label{com variavel y}
Let $F$ be a finite field with $|F| = q$ and char$(F)=p \neq 2$.
If $f(y_1,\ldots,y_n) \in C(UT_2(F),\star) $ then $ f(y_1,\ldots,y_n)\equiv 0$.
\end{proposition}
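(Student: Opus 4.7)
The plan is to argue by induction on $n$. The case $n = 1$ is exactly Proposition \ref{uma variavel y}, so fix $n \ge 2$ and assume the statement for polynomials in fewer $y$-variables.

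First I would put $f$ in normal form. Using Theorem \ref{teoremaprimeirainvolucao} restricted to $y$-only polynomials, I write
\[
f + J = \sum_{s=0}^{2q-1} h_s(y_1,\ldots,y_{n-1})\,y_n^s + J,
\]
where $J = Id(UT_2(F),\star)$ and each $h_s$ is a polynomial whose exponent vector together with $s$ lies in $\Lambda_n$; in particular, when $s \ge q$ every exponent appearing in $h_s$ is strictly less than $q$.

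Next I would extract consequences of centrality. Evaluating on arbitrary symmetric matrices $Y_1,\ldots,Y_{n-1} \in UT_2(F)^+$ and $Y_n = \begin{pmatrix} a & b \\ 0 & a\end{pmatrix}$ with $a,b \in F$, and using (\ref{potenciay}), the $(1,2)$-entry of $f(Y_1,\ldots,Y_n)$ equals
\[
b \sum_{s=1}^{2q-1} s\,a^{s-1}\,[h_s(Y_1,\ldots,Y_{n-1})]_{11} + \sum_{s=0}^{2q-1} a^s\,[h_s(Y_1,\ldots,Y_{n-1})]_{12}.
\]
Centrality forces both sums to vanish for all $a,b \in F$ and all symmetric substitutions. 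After reducing via $a^q = a$ and applying Lemma \ref{lemapolcomutident}, the first sum yields relations such as $[h_0]_{12} = 0$, $[h_s + h_{q+s-1}]_{12} = 0$ for $2 \le s \le q-1$, and $[h_1 + h_q + h_{2q-1}]_{12} = 0$, with analogous identities from the second sum on the $(1,1)$-entries. Each such identity holds for every choice of $Y_1,\ldots,Y_{n-1}$, which says precisely that a specific $F$-linear combination of the $h_s$ is itself a $*$-central polynomial in $n-1$ variables, hence $\equiv 0$ by the inductive hypothesis.

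The last step is to combine these vanishings with the $V$-identities to conclude $f \equiv 0$. For $y_n$-exponents $s$ divisible by $p$, Corollary \ref{f com y p} converts $h \equiv 0$ into $h\,y_n^s \equiv 0$ directly, covering the pure $y_n^{lp}$ cases as well. For the remaining exponents I would pair monomials of $y_n$-degrees $s$ and $q+s-1$: Corollary \ref{diminui potencia de y} rewrites $h\,y_n^{q+l-1} \equiv g\,y_n^l$ whenever $p \nmid l$, and the coefficient $g$ that appears is, up to a scalar, one of the combinations just shown to be $\equiv 0$. The main obstacle is bridging from ``the combination vanishes as an $(n{-}1)$-variable polynomial'' to ``the corresponding $h_s\,y_n^s$ term vanishes in $V + J$'', since $V$ is only a $T(*)$-space and not a two-sided ideal. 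The resolution is to package the pairing inside the generators $l\,y_1(y_2^{q+l-1} - y_2^l) + y_1^q y_2^l$ of $V$ via Lemma \ref{lema general de l}, so that the required reductions come pre-installed in $V$ itself rather than having to be produced externally.
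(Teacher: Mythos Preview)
Your inductive framework and base case are correct, and you correctly identify the key obstacle: $V+J$ is only a $T(*)$-space, so knowing that a combination such as $h_t + h_{q+t-1}$ lies in $V+J$ does not by itself give $h_t\,y_n^{t} + h_{q+t-1}\,y_n^{q+t-1} \in V+J$. Your proposed fix (``package the pairing inside the generators of $V$'') is too vague to close this gap. Concretely, the relations coming from the $(1,2)$-part of the evaluation say that $h_t + h_{q+t-1}$ is central (hence $\equiv 0$ by induction), whereas those coming from the $(1,1)$-part say that $t\,h_t(a) + (t-1)\,h_{q+t-1}(a) = 0$ for scalar arguments---a \emph{different} linear combination. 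So you cannot upgrade $h_t + h_{q+t-1}\equiv 0$ to $h_t + h_{q+t-1}\in J$, and Corollary~\ref{f com y p} only lets you multiply an element of $V+J$ by $y_n^{lp}$, not by an arbitrary $y_n^{t}$.

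The paper avoids this by reversing the order of operations: it applies Corollary~\ref{diminui potencia de y} \emph{before} evaluating, so that $f\equiv h$ where the only surviving high-$y_n$-degree terms are $f_{q+i-1}\,y_n^{q+i-1}$ with $p\mid i$. After that reduction, these high-degree pieces and the low-degree pieces with $p\nmid i$ contribute to \emph{disjoint} powers of $a_n$ in the $(1,2)$-entry, so Lemma~\ref{lemapolcomutident} kills each $f_{q+i-1}$ outright as a polynomial. Now $\deg_{y_n}h<q$, Proposition~\ref{propositconseq} isolates each $h_i\,y_n^{i}$ as central, and substituting $y_n\mapsto 1$ shows each $h_i$ is central; for $p\nmid i$ the evaluation already gives $h_i(a)=0$ on scalars, which together with centrality forces $h_i\in J$---an actual identity, so multiplying by $y_n^{i}$ is harmless. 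Only the $p\mid i$ terms require induction plus Corollary~\ref{f com y p}, where the multiplication is legitimate.
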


\begin{proof}

By Theorem \ref{teoremaprimeirainvolucao} we can suppose
\[ f = \sum_{s \in \Lambda_n} \alpha_s y_1^{s_1} \cdots y_n^{s_n}, \]
where $ s=(s_1,\ldots,s_n)$, $ \alpha_s \in F $. 

We will prove the proposition using induction on $n$. 

The case $n=1$ is consequence of Proposition \ref{uma variavel y}.

Suppose $n\geq 2$. Write 
$$f = \sum_{i=0}^{2q-1} f_i y_n^i.$$
Note that:

a) If $0 \leq i \leq q-1$ then
\begin{align*}
f_i = \sum_{(s_1,\ldots,s_{n-1}) \in \Lambda_{n-1}} \alpha_{(s_1,\ldots,s_{n-1},i)} y_1^{s_1} \cdots y_{n-1}^{s_{n-1}}.
\end{align*}

b) If $ q \leq i \leq 2q-1$ then
\begin{align}\label{tipo maior que q}
f_i = \sum_{s_1,\ldots,s_{n-1}=0}^{q-1} \alpha_{(s_1,\ldots,s_{n-1},i)} y_1^{s_1} \cdots y_{n-1}^{s_{n-1}}.
\end{align}

Write
\[ g = \sum_{i=q}^{2q-1}  f_i y_n^i = \sum_{i=1}^{q} f_{q+i-1} y_n^{q+i-1} 
= \sum_{\substack{i=1 \\ p|i}}^q f_{q+i-1} y_n^{q+i-1}
+ \sum_{\substack{i=1 \\ p\nmid i}}^q f_{q+i-1} y_n^{q+i-1}. \] 
By Corollary \ref{diminui potencia de y}, there exist polynomials $g_i(y_1,\ldots,y_{n-1})$ such that
\[ g\equiv \sum_{\substack{i=1 \\ p|i}}^q f_{q+i-1} y_n^{q+i-1} + 
\sum_{\substack{i=1 \\ p\nmid i}}^q g_i y_n^i. \]
Thus, there exist polynomials $ h_i(y_1,\ldots,y_{n-1})$ such that
\begin{equation}\label{aaa7} f \equiv \underbrace{\sum_{\substack{i=1 \\ p|i}}^q f_{q+i-1} y_n^{q+i-1} + 
\sum_{i=0}^{q-1} h_i y_n^i}_h. 
\end{equation}
Denote \begin{equation*}
h(y_1,\ldots,y_n) = \sum_{\substack{i=1 \\ p|i}}^q f_{q+i-1} y_n^{q+i-1}
+ \sum_{i=0}^{q-1} h_i y_n^i.
\end{equation*}
Since $f\in C(UT_2(F,\star))$ and $V+Id(UT_2(F,\star)) \subseteq C(UT_2(F,\star))$, we have by (\ref{aaa7}) that 
$ h\in C(UT_2(F,\star)) $. 
 
Consider the following matrices 
\[Y_n = 
\begin{pmatrix}
a_n & 1 \\
0 & a_n
\end{pmatrix} \ \ \mbox{and} \ \ 
Y_k=\begin{pmatrix}
a_k & 0 \\
0 & a_k
\end{pmatrix}  \ (k\neq n)\]
where $a_j \in F$ for all $j$.
By (\ref{potenciay}) we have
\[Y_n^i = 
\begin{pmatrix}
a_n^i & i a_n^{i-1} \\
0 & a_n^i
\end{pmatrix}.\]
In particular, if $p|i$ then
\[Y_n^{q+i-1} = 
\begin{pmatrix}
a_n^i & - a_n^{i-1} \\
0 & a_n^i
\end{pmatrix}.\]
Hence 
\[h(Y_1,\ldots,Y_n)=\begin{pmatrix}
h(a_1,\ldots,a_n) & \overline{h}(a_1,\ldots,a_n) \\
0 & h(a_1,\ldots,a_n)
\end{pmatrix},\]
where
\begin{eqnarray*}
\overline{h}(a_1,\ldots,a_n) &=&
\sum_{\substack{i=1 \\ p|i}}^q f_{q+i-1}(a_1,\ldots,a_{n-1})(-1) a_n^{i-1}
+ \sum_{i=0}^{q-1} h_i(a_1,\ldots,a_{n-1}) i a_n^{i-1}\\
&=&\sum_{\substack{i=1 \\ p|i}}^q f_{q+i-1}(a_1,\ldots,a_{n-1})(-1) a_n^{i-1}
+ \sum_{\substack{i=1 \\ p \nmid i}}^{q-1} h_i(a_1,\ldots,a_{n-1}) i a_n^{i-1}.
\end{eqnarray*}
Since $h(Y_1,\ldots,Y_n) \in Z(UT_2(F)) $ we obtain 
$\overline{h}(a_1,\ldots,a_n)=0$ for all $a_1,\ldots,a_n \in F$.
By Lemma \ref{lemapolcomutident} it follows that 
\begin{equation}\label{equacao f segunda}
f_{q+i-1} (a_1,\ldots,a_{n-1}) = 0
\end{equation}
for all $i=1,\ldots, q$ where $p|i$ ; and 
\begin{equation}\label{equacao h segunda}
h_i (a_1,\ldots,a_{n-1}) = 0
\end{equation}
for all $i=1, \ldots,  q-1$ where $p \nmid i$.

By (\ref{tipo maior que q}), (\ref{equacao f segunda}) and Lemma \ref{lemapolcomutident}, we have 
$f_{q+i-1}(y_1,\ldots ,y_{n-1}) = 0 $ for all $i=1,\ldots, q$ where $p|i$.
Thus
\begin{equation} \label{aaa8}
h(y_1,\ldots,y_n) = \sum_{i=0}^{q-1} h_i y_n^i. 
\end{equation}
Since $h\in C(UT_2(F),\star)$, by Proposition \ref{propositconseq} it follows that 
$h_i y_n^i \in C(UT_2(F),\star)$ for all $i=0,\ldots, q-1$.  
Substituting in $h_iy_n^i$ the variable $y_n$ by $1$, it follows that 
\[h_i \in C(UT_2(F),\star)\]
for all $i=0,\ldots, q-1$. 
We have two cases:

\

a) Case $p \nmid i$. 

Consider $$Y_k = \begin{pmatrix}
a_k & b_k \\
0 & a_k
\end{pmatrix},$$
where $a_k,b_k \in F$ and $k=1,\ldots,n-1$. We have
\[ h_i(Y_1,\ldots,Y_{n-1}) = \begin{pmatrix}
h_i(a_1,\ldots,a_{n-1}) & \beta \\
0 & h_i(a_1,\ldots,a_{n-1})
\end{pmatrix} \]
for some $\beta \in F$. Since $h_i \in C(UT_2(F,\star))$ it follows that $\beta=0$. By (\ref{equacao h segunda}), 
we have $h_i(a_1,\ldots,a_{n-1})=0$ too. Thus 
$h_i(Y_1,\ldots,Y_{n-1})=0$ and $h_i(y_1,\ldots,y_{n-1}) \in Id(UT_2(F),\star)$ 
for all $i=0, \ldots,  q-1$ where $p \nmid i$.

\

b) Case $p | i$.

Since $h_i(y_1,\ldots,y_{n-1}) \in C(UT_2(F),\star)$, we obtain, by induction, that $h_i \equiv 0$. Thus, by 
Corollary \ref{f com y p}, it follows that $h_i y_n^i \equiv 0$ for all $i=0, \ldots,  q-1$ where $p | i$.

By (\ref{aaa7}), (\ref{aaa8}) and two cases above, we have
\[f\equiv h \equiv 0,\]
as desired. 
 \end{proof}

 \begin{theorem} \label{teorinvstar3}
Let $F$ be a finite field with $|F| = q$ and char$(F)=p \neq 2$.
Then 
\[C(UT_2(F),\star) = \left\langle l y_1 (y_2^{q+l-1} -  y_2^l) + y_1^q y_2^l: \ 1 \leq l \leq p \right\rangle ^{TS(*)} + \left\langle z_1 z_2 \right\rangle ^{TS(*)} +
Id(UT_2(F),\star).\]
\end{theorem}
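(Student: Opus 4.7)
The plan is to set up notation, establish the easy inclusion, then reduce an arbitrary $*$-central polynomial modulo the basis from Theorem \ref{teoremaprimeirainvolucao}, handle the $z$-involving part and the pure-$y$ part separately, and finally compress the infinite generating family of $V$ down to the finite one listed in the theorem. Denote $I = Id(UT_2(F),\star)$, $C = C(UT_2(F),\star)$, write $\Phi_l = ly_1(y_2^{q+l-1} - y_2^l) + y_1^q y_2^l$, let $V = \langle \Phi_l : l \geq 0 \rangle^{TS(*)}$, and let $R$ denote the right-hand side of the theorem. The inclusion $R \subseteq C$ is immediate from Proposition \ref{polinomio central da finitud}, Lemma \ref{polcentral1}, and $I \subseteq C$.

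For the reverse inclusion, let $f \in C$. By Proposition \ref{propositconseq} one may assume all partial degrees of $f$ are less than $2q$, and decompose $f$ modulo $I$ against the basis of $F\langle Y \cup Z\rangle/I$ from Theorem \ref{teoremaprimeirainvolucao}. The terms containing at least one $z$-variable are absorbed into $\langle z_1z_2 \rangle^{TS(*)} + I$ by the strategy of Theorem \ref{teorinvsta2}: Lemma \ref{polcentral1} and Lemma \ref{coromatrizantisimetrica2} handle the pure-$z$ monomials, while Lemma \ref{lemavariavcomutam} combined with Proposition \ref{propositioncomznadireita} deals with monomials ending in $z_m$ or $[z_m, y_k]$. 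The remaining pure-$y$ part $f_y$ still lies in $C$, and Proposition \ref{com variavel y} places it in $V + I$.

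The crux is thus to show $V \subseteq \langle \Phi_l : 1 \leq l \leq p \rangle^{TS(*)} + I$. For $l = 0$, the substitution $y_2 \mapsto 1$ in $\Phi_1$ yields $y_1^q = \Phi_0$. For $l \geq p+1$, write $l = kp + r$ with $1 \leq r \leq p$ and $k \geq 1$; in characteristic $p$ one has the literal identity $\Phi_l = \Phi_r \cdot y_2^{kp}$ in $F\langle Y \cup Z\rangle$. Following the symmetrization idea of Lemma \ref{adicionando ps}, substitute $y_1 \mapsto u := (1/2)(y_1 y_3^{kp} + y_3^{kp} y_1)$ in $\Phi_r$; this produces an element of $\langle \Phi_r \rangle^{TS(*)}$ which, modulo $I$, equals $\Phi_r \cdot y_3^{kp}$, using $u \equiv y_1 y_3^{kp}$ and $u^q \equiv y_1^q y_3^{kpq} \equiv y_1^q y_3^{kp} \pmod I$ (the last by Lemma \ref{potencia pq} applied with $y_3^k$ in place of $y_1$) together with the commutativity of $y$-variables modulo $I$. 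A further substitution $y_3 \mapsto y_2$ (a $*$-endomorphism since $y_2$ is symmetric) yields $\Phi_r \cdot y_2^{kp} = \Phi_l$, so $\Phi_l \in \langle \Phi_r \rangle^{TS(*)} + I \subseteq R$.

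The main obstacle is precisely this last compression from an infinite family of generators of $V$ to a finite one. It forces the simultaneous use of three ingredients: the Frobenius-type collapse $y^{pq} \equiv y^p \pmod I$ (Lemma \ref{potencia pq}), the commutativity of symmetric variables modulo $I$ from Theorem \ref{teoremaprimeirainvolucao}, and the Jordan-style symmetrization $u = (1/2)(y_1 y_3^{kp} + y_3^{kp} y_1)$ that turns what is naively right-multiplication by $y_3^{kp}$ into a genuine $T(*)$-space substitution into $\Phi_r$. Without the first, the argument from Theorem \ref{teorinvsta1} fails to pass to finite fields; without the last, multiplication by $y_3^{kp}$ is not visibly realized inside $\langle \Phi_r \rangle^{TS(*)}$.
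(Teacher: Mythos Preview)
Your proof is correct and follows essentially the same two-step route as the paper: first show $C = V + \langle z_1 z_2\rangle^{TS(*)} + I$ by reducing modulo the basis of Theorem~\ref{teoremaprimeirainvolucao} and invoking Lemma~\ref{lemavariavcomutam}, Proposition~\ref{propositioncomznadireita}, and Proposition~\ref{com variavel y}; then compress $V$ to the finite family $\{\Phi_l : 1 \le l \le p\}$ via the same symmetrization trick together with Lemma~\ref{potencia pq}. One wording issue to tighten: Proposition~\ref{propositconseq} does not bound degrees but rather lets you pass to homogeneous components, so the correct order is to first express $f$ in the basis (forcing $\deg_{z_i} < q$) and \emph{then} apply Proposition~\ref{propositconseq} in each $z_i$ to split $f$ into $z$-homogeneous pieces that are individually in $C$ --- this is what legitimizes your claim that ``the remaining pure-$y$ part $f_y$ still lies in $C$'' and that the $z$-involving piece as a whole (not its individual monomials) lands in $\langle z_1 z_2\rangle^{TS(*)} + I$.
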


\begin{proof}
Firstly, we will prove the following claim :

\

\noindent {\bf Claim 1.} The set $C(UT_2(F),\star)$ equals 
\[C(UT_2(F),\star) = \left\langle l y_1 (y_2^{q+l-1} -  y_2^l) + y_1^q y_2^l: \ l \geq 0 \right\rangle ^{TS(*)} + \left\langle z_1 z_2 \right\rangle ^{TS(*)} +
Id(UT_2(F),\star).\]

\noindent \emph{Proof of Claim 1.} Denote $J=Id(UT_2(F),\star)$ and 
\[V=\left\langle l y_1 (y_2^{q+l-1} -  y_2^l) + y_1^q y_2^l: \ l \geq 0 \right\rangle ^{TS(*)}.\]
Since $z_1z_2 \in C(UT_2(F),\star)$, we have $ C(UT_2(F),\star) \supseteq \left\langle z_1 z_2 \right\rangle ^{TS(*)} $.
Therefore, by Proposition \ref{polinomio central da finitud}, we obtain 
\begin{equation}\label{aaa9}
C(UT_2(F),\star) \supseteq \left(V + \left\langle z_1 z_2 \right\rangle ^{TS(*)} +J\right).
\end{equation}

Consider $f \in C(UT_2(F),\star)$. We will prove that $f\in \left(V + \left\langle z_1 z_2 \right\rangle ^{TS(*)} +J\right)$.
By Theorem \ref{teoremaprimeirainvolucao}, $f=f_J+f_{\Upsilon}$ where $f_J \in J$ and $f_{\Upsilon}$ 
is a linear combination
of polynomials
\[
 \left\{
\begin{array}{ll}
y_1^{s_1}\cdots y_n^{s_n}z_1^{r_1}\cdots z_m^{r_m-1}[z_m,y_k], & 0\leq s_1,\ldots,s_n,r_1,\ldots,r_m< q,\ \ r_m\geq 1,\\
&n\geq 1, \ m\geq 1, \  k\geq 1; \hspace{\fill} (\Upsilon_1)\\
y_1^{s_1}\cdots y_n^{s_n}z_1^{r_1}\cdots z_m^{r_m}, & 0\leq s_1,\ldots,s_n,r_1,\ldots,r_m< q, \ \ r_m\geq 1, \\
&n\geq 1, \ m\geq 1; \hspace{\fill} (\Upsilon_2)\\
y_1^{s_1}\cdots y_n^{s_n}, & (s_1,\ldots,s_n)\in \Lambda_n, \ \ n\geq 1. \hspace{\fill} (\Upsilon_3)
\end{array}
\right.
\] 
Since $f_J \in J$, we have 
\[f\in \left(V + \left\langle z_1 z_2 \right\rangle ^{TS(*)} +J\right) \Leftrightarrow
 f_{\Upsilon} \in \left(V + \left\langle z_1 z_2 \right\rangle ^{TS(*)} +J\right).
\]
Thus we can suppose $f=f_{\Upsilon}$. Since $\deg_{z_i}f <q$,
we can suppose $f$ a homogeneous polynomial in the variable $z_i$ for all $i=1,\ldots,m$
(see Proposition \ref{propositconseq}).
Denote $\deg_{z_i}f =r_i$.

If $r_1=\ldots=r_m=0$ then by Proposition \ref{com variavel y} 
\[f \in \left(V + \left\langle z_1 z_2 \right\rangle ^{TS(*)} +J\right).\]

Suppose $r_i \neq 0$ for some $i$. Renumbering the indices if necessary, we may assume that
$r_i \geq 1$ for all $i=1,\ldots,m$.
Since $f$ is a linear combination of polynomials in $\Upsilon_1$ and $\Upsilon_2$, we have $f=f_1+f_2$ where
\[
 f_1=f_1(y_1,y_2,\ldots,z_1,z_2,\ldots) = \sum_{n,k,s} \alpha_{(n,k,s)}y_1^{s_1}\cdots y_n^{s_n}z_1^{r_1}\cdots z_m^{r_m-1}[z_m,y_k]
\]
with $0\leq s_1,\ldots,s_n< q$, \ \ $1\leq r_1,\ldots,r_m< q$, 
$\ \ n\geq 1, \ \ m\geq 1, \  \ k\geq 1$,
\ \ $s=(s_1,\ldots,s_n)$, \ \ $\alpha_{(n,k,s)}\in F$; and 
\[
 f_2=f_2(y_1,y_2,\ldots,z_1,z_2,\ldots) = \sum_{n,s} \beta_{(n,s)}y_1^{s_1}\cdots y_n^{s_n}z_1^{r_1}\cdots z_m^{r_m}
\]
with $0\leq s_1,\ldots,s_n< q$, \  \ $1\leq r_1,\ldots,r_m< q$, $ \ \ n\geq 1, \ \ m\geq 1$,
\ \ $s=(s_1,\ldots,s_n),$ \  \  $\beta_{(n,s)}\in F$.

We have three cases:

\

Case 1. $m=1$ and $r_m=1$.

In this case, $f=f_1+f_2$ where
\[
 f_1=f_1(y_1,y_2,\ldots,z_1) = \sum_{n,k,s} \alpha_{(n,k,s)}y_1^{s_1}\cdots y_n^{s_n}[z_1,y_k]
\]
and 
\[
 f_2=f_2(y_1,y_2,\ldots,z_1) = \sum_{n,s} \beta_{(n,s)}y_1^{s_1}\cdots y_n^{s_n}z_1.
\]
Let
\[
 Y_i=\left(
 \begin{array}{cc}
 a_i&0  \\
 0&a_i 
 \end{array}
\right) \  \  \mbox{and} \ \ 
Z_i=\left(
 \begin{array}{cc}
 1&0  \\
 0&-1 
 \end{array}
\right)
 \]
where $a_i \in F$. 
We have
\[f(Y_1,Y_2,\ldots,Z_1)=
 \left(\begin{array}{cc}
       \displaystyle \sum_{n,s} \beta_{(n,s)}a_1^{s_1}\cdots a_n^{s_n} &\theta \\
       0&\displaystyle -\sum_{n,s} \beta_{(n,s)}a_1^{s_1}\cdots a_n^{s_n}
       \end{array}
\right)
\]
where $\theta \in F$. Since $f(Y_1,Y_2,\ldots,Z_1) \in Z(UT_2(F))$, it follows that $\theta =0$ and 
\[\sum_{n,s} \beta_{(n,s)}a_1^{s_1}\cdots a_n^{s_n}=0\]
for all $a_1,\ldots, a_n \in F$. Since $0\leq s_1,\ldots,s_n< q$  we have, by Lemma \ref{lemapolcomutident}, that
$\beta_{(n,s)}=0$ for all $n,s$. Thus $f=f_1$. If $\overline{Y}_1,\overline{Y}_2, \ldots \in UT_2(F)^+$ and
$\overline{Z}_1 \in UT_2(F)^-$ then 
\[f(\overline{Y}_1,\overline{Y}_2,\ldots,\overline{Z}_1)=
f_1(\overline{Y}_1,\overline{Y}_2,\ldots,\overline{Z}_1)=\alpha e_{12} \]
for some $\alpha \in F$. Since $f(\overline{Y}_1,\overline{Y}_2,\ldots,\overline{Z}_1)\in Z(UT_2(F))$, we obtain
$\alpha=0$, that is $f \in J$. Therefore $f \in \left(V + \left\langle z_1 z_2 \right\rangle ^{TS(*)} +J\right)$.

\

Case 2. $m\geq 2$ and $r_m=1$.

In this case, $f=f_1+f_2$ where
\[
 f_1=f_1(y_1,y_2,\ldots,z_1,z_2,\ldots) = 
 \sum_{n,k,s} \alpha_{(n,k,s)}y_1^{s_1}\cdots y_n^{s_n}z_1^{r_1}\cdots z_{m-1}^{r_{m-1}}[z_m,y_k]
\]
and 
\[
 f_2=f_2(y_1,y_2,\ldots,z_1,z_2,\ldots) = 
 \sum_{n,s} \beta_{(n,s)}y_1^{s_1}\cdots y_n^{s_n}z_1^{r_1}\cdots z_{m-1}^{r_{m-1}}z_m.
\]
By Lemma \ref{lemavariavcomutam}, we have 
\[z_{m-1}^{r_{m-1}}[z_m,y_k]+J=z_{m-1}^{r_{m-1}-1}z_m[z_{m-1},y_k]+J=
-z_{m-1}^{r_{m-1}-1}[z_{m-1},y_k]z_m+J.\]
Thus $f+J=\widetilde{f}z_m+J$ where 

\begin{eqnarray*}
\widetilde{f}&=&
-\sum_{n,k,s} \alpha_{(n,k,s)}y_1^{s_1}\cdots y_n^{s_n}z_1^{r_1}\cdots z_{m-1}^{r_{m-1}-1}[z_{m-1},y_k]
+\\
 &&+\sum_{n,s} \beta_{(n,s)}y_1^{s_1}\cdots y_n^{s_n}z_1^{r_1}\cdots z_{m-1}^{r_{m-1}}.
\end{eqnarray*}
Since $\widetilde{f}z_m \in C(UT_2(F),\star)$ we have, by Proposition \ref{propositioncomznadireita}, that
$\widetilde{f}z_m \in ( 
\langle z_1z_2 \rangle^{TS(*)}+J)$. Therefore
\[f \in \left(V+ 
\langle z_1z_2 \rangle^{TS(*)}+J\right).\]

\

Case 3. $m\geq 1$ and $r_m\geq 2$.

By Lemma \ref{lemavariavcomutam}, we have 
\[z_{m}^{r_m-1}[z_m,y_k]+J=-z_{m}^{r_m-2}[z_m,y_k]z_m+J.\]
Thus $f+J=\widetilde{f}z_m+J$ where
\begin{eqnarray*}
\widetilde{f}&=&-\sum_{n,k,s} 
\alpha_{(n,k,s)}y_1^{s_1}\cdots y_n^{s_n}z_1^{r_1}\cdots z_m^{r_m-2}[z_m,y_k]\\
 &&+\sum_{n,s} \beta_{(n,s)}y_1^{s_1}\cdots y_n^{s_n}z_1^{r_1}\cdots z_m^{r_m-1}.
\end{eqnarray*}
Since $\widetilde{f}z_m \in C(UT_2(F),\star)$ we have, by Proposition \ref{propositioncomznadireita}, that
$\widetilde{f}z_m \in ( 
\langle z_1z_2 \rangle^{TS(*)}+J)$. Therefore
\[f \in \left(V+ 
\langle z_1z_2 \rangle^{TS(*)}+J\right).\]

\

We prove that 
\[C(UT_2(F),\star) \subseteq \left(V + \left\langle z_1 z_2 \right\rangle ^{TS(*)} +J\right).\]
By (\ref{aaa9}) we finish the proof of Claim 1.

\

\

\noindent {\bf Claim 2.} If $l\geq 0$ then
\[ l y_1 (y_2^{q+l-1} -  y_2^l) + y_1^q y_2^l \in \left\langle r y_1 (y_2^{q+r-1} -  y_2^r) + y_1^q y_2^r : 1\leq r \leq p \right\rangle ^{TS(*)} + J.\]
	
\

\noindent \emph{Proof of Claim 2.} If $l\geq 0$, let $k,r$ be the integers such that 
$l = k p + r$ and $0\leq r <p$. 
We have
\begin{equation}\label{abcdcba}
l y_1 (y_2^{q+l-1} -  y_2^l) + y_1^q y_2^l = r y_1 y_2^{k p} (y_2^{q+r-1} -  y_2^r) + y_1^q y_2^{k p} y_2^r. 
\end{equation}

\

Case 1. $r = 0$.

In this case, by (\ref{abcdcba}), it follows that
\[l y_1 (y_2^{q+l-1} -  y_2^l) + y_1^q y_2^l= y_1^q y_2^{kp} \in \left\langle y_1^q y_2^p \right\rangle ^{TS(*)}.\]
Note that
\[y_1^q y_2^p=p y_1 (y_2^{q+p-1} -  y_2^p) + y_1^q y_2^p.\]
This case is done.

\

Case 2. $1\leq r < p$.

Denote $u = (1/2) (y_1 y_2^{k p} + y_2^{k p} y_1)$. 
Since
\[y_1y_2+J=y_2y_1+J,\]
we have 
$u + J = y_1 y_2^{k p}+J$. Moreover, by Lemma \ref{potencia pq}, 
\[ u^q + J = y_1^q y_2^{k p q} + J = y_1^q y_2^{k p} + J. \]
Thus, by (\ref{abcdcba}), we obtain
\begin{eqnarray*}
 l y_1 (y_2^{q+l-1} -  y_2^l) + y_1^q y_2^l+J&=&r y_1 y_2^{k p} (y_2^{q+r-1} -  y_2^r) + y_1^q y_2^{k p} y_2^r + J \\
 &=&r u (y_2^{q+r-1} -  y_2^r) + u^q y_2^r + J.
\end{eqnarray*}
Therefore
\[l y_1 (y_2^{q+l-1} -  y_2^l) + y_1^q y_2^l \in 
 \left\langle r y_1 (y_2^{q+r-1} -  y_2^r) + y_1^q y_2^r \right\rangle^{TS(*)} + J\]
 as desired.
 
 \
 
 By the Claims 1 and 2 we complete the proof of theorem.
\end{proof}

\section{$*$-central polynomials for $(UT_2(F),s)$}

In this section, we describe the $*$-central polynomials for $(UT_2(F),s)$ where  
\begin{equation*}
  \left(
 \begin{array}{cc}
  a&c \\
  0&b
 \end{array}
\right)^{s}=\left(
 \begin{array}{cc}
  b&-c \\
  0&a
 \end{array}
\right) 
\end{equation*}
for all $a,b,c \in F$. Note that $\{e_{11}+e_{22}\}$ and $\{e_{11}-e_{22}, e_{12}\}$ form a basis for the
vector spaces $(UT_2(F))^{+}$ and $(UT_2(F))^{-}$ respectively.

The next theorem was proved in \cite[Theorem 3.2]{vinkossca}: 
 
\begin{theorem}\label{identidadesut2fsinfinito}
Let $F$ be an infinite field with $char(F)\neq 2$. Then $Id(UT_2(F),s)$ is the $T(*)$-ideal generated by 
the polynomials
\begin{equation*}
[y_1,y_2], \ \ [z_1,y_1], \ \  [z_1,z_2][z_3,z_4] \ \ \mbox{and} \ \  z_1z_2z_3-z_3z_2z_1.
\end{equation*}
\end{theorem}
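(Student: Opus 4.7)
The plan is to establish that the $T(*)$-ideal $I$ generated by $[y_1,y_2]$, $[z_1,y_1]$, $[z_1,z_2][z_3,z_4]$, and $z_1z_2z_3-z_3z_2z_1$ equals $Id(UT_2(F),s)$. The easy direction $I\subseteq Id(UT_2(F),s)$ is a direct check. Since the symmetric part of $UT_2(F)$ under $s$ is $F(e_{11}+e_{22})$, which is central, both $[y_1,y_2]$ and $[z_1,y_1]$ vanish on any substitution by symmetric/skew elements. Evaluating on a generic skew element $Z_i=b_i(e_{11}-e_{22})+c_ie_{12}$ gives $[Z_i,Z_j]\in Fe_{12}$, hence $[Z_1,Z_2][Z_3,Z_4]\in Fe_{12}^2=0$; a short calculation shows that the diagonal and $(1,2)$-entries of $Z_1Z_2Z_3$ are symmetric in the outer indices, proving $Z_1Z_2Z_3=Z_3Z_2Z_1$.

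For the reverse inclusion, since $F$ is infinite, Propositions \ref{baseparapropiosbaseparatudo} and \ref{proposgeradoremultilemultih} reduce the task to exhibiting a spanning set of $B/(B\cap I)$, where $B$ is the space of $Y$-proper polynomials, and then checking that any element of this spanning set which is a $*$-identity for $(UT_2(F),s)$ already lies in $I$. Modulo $I$, the symmetric variables $y_i$ are central, so every proper polynomial reduces to a linear combination of polynomials in the $z$'s multiplied on the left by a product of $y$'s. Inside the $z$-part, the identity $[z_1,z_2][z_3,z_4]\equiv 0$ means that at most one commutator bracket can survive, and substituting into $z_1z_2z_3\equiv z_3z_2z_1$ yields both the derived relation $[z_1,z_2]z_3\equiv -z_3[z_1,z_2]\pmod I$ (which lets any surviving bracket be moved to a canonical position) and the identifications produced by the transpositions $(i,i+2)$ acting on the positions of a $z$-monomial. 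Combining these reductions one obtains an explicit list of normal forms spanning $B/(B\cap I)$, consisting of ordered $y$-powers times a normal-form $z$-word, optionally capped by one rightmost commutator $[z_i,z_j]$.

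To finish, linear independence is verified by specializing every $y_i$ to $a_i(e_{11}+e_{22})$ and every $z_j$ to $b_j(e_{11}-e_{22})+c_je_{12}$, and reading off the $(1,1)$- and $(1,2)$-entries of the resulting product as polynomials in the independent scalars $a_i,b_j,c_j$. Because $F$ is infinite, vanishing of these polynomials forces every coefficient in the normal form to be zero. The main obstacle is the middle step: the identity $z_1z_2z_3=z_3z_2z_1$ generates only the parity-preserving subgroup of $S_m$ (via the swaps $(i,i+2)$) rather than the full symmetric group, so one must be careful in deciding which $z$-orderings become equal mod $I$ and in pinning down where the surviving commutator is placed; this asymmetric action on $z$-positions, together with the fact that $[z_i,z_j]$ only anti-commutes with individual $z_k$'s, is what makes the extraction of a clean canonical form delicate.
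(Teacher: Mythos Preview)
The paper does not give its own proof of this theorem: immediately before the statement it says ``The next theorem was proved in \cite[Theorem 3.2]{vinkossca}'' and then simply records the result. So there is no argument in the present paper to compare against; the benchmark is the proof in Di~Vincenzo--Koshlukov--La~Scala.

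Your outline is the standard route and is, in broad strokes, the same strategy as in \cite{vinkossca}: verify $I\subseteq Id(UT_2(F),s)$ by direct substitution, then use the $Y$-proper reduction (Proposition~\ref{baseparapropiosbaseparatudo}) together with the centrality of the $y$'s modulo $I$ to shrink the problem to $z$-words, and finally test linear independence by generic substitutions $Z_j=b_j(e_{11}-e_{22})+c_je_{12}$. This is correct as a plan.

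Where your write-up is still a plan rather than a proof is exactly the point you flag at the end. You stop at ``an explicit list of normal forms'' without giving one, and the list is not obvious: besides the parity action of the swaps $(i,i+2)$ on $z$-positions, the identity $z_1z_2z_3\equiv z_3z_2z_1$ also forces a Jacobi-type relation
\[
z_1[z_2,z_3]\;-\;z_2[z_1,z_3]\;+\;z_3[z_1,z_2]\;\equiv\;0 \pmod{I},
\]
so the naive candidates ``sorted $z$-monomial'' and ``sorted $z$-monomial times one $[z_i,z_j]$'' are not linearly independent modulo $I$. One has to cut these down further (for instance by fixing one index in the commutator) before the generic-substitution argument can separate them. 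Until you actually write down a spanning set for $B/(B\cap I)$ and check that the induced family $y_1^{s_1}\cdots y_n^{s_n}w$ is linearly independent modulo $Id(UT_2(F),s)$, the reverse inclusion is not established. In short: right approach, but the ``delicate'' step you identify still needs to be carried out, not just named.
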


\

The next theorem was proved in \cite[Theorem 6.15]{ronalddimas}:

\begin{theorem}\label{identidadesut2fsfinito}
 Let $F$ be a finite field with $|F|=q$ elements and char$(F)\neq 2$.  Then $Id(UT_2(F),s)$ is the $T(*)$-ideal generated by 
the polynomials
\begin{eqnarray*}
[y_1,y_2], \ \ [z_1,y_1], \ \  [z_1,z_2][z_3,z_4] \ \ \mbox{and} \ \  z_1z_2z_3-z_3z_2z_1,  \\
y_1^q-y_1, \ \ (z_1^q-z_1)(z_2^q-z_2), \ \ z_1^{q+1}-z_1^2,  \\
(z_{1}^q-z_{1})z_{2}+z_2(z_{1}^q-z_{1}), \ \  [z_{1}, z_{2}](z_3^{q}-z_3).
\end{eqnarray*}
\end{theorem}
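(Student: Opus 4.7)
The proof proceeds in two stages: first verify each listed polynomial lies in $Id(UT_2(F),s)$, then establish the reverse inclusion by exhibiting a spanning set of $F\langle Y \cup Z \rangle / J$ in canonical form and checking linear independence on $(UT_2(F),s)$, where $J$ denotes the $T(*)$-ideal generated by the listed polynomials.

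For the verification step, I would use the explicit descriptions $(UT_2(F))^{+} = F(e_{11}+e_{22})$ and $(UT_2(F))^{-} = F(e_{11}-e_{22}) \oplus F e_{12}$. A generic symmetric matrix $Y = a(e_{11}+e_{22})$ is scalar, so $[y_1,y_2]$ and $[z_1,y_1]$ vanish, and Fermat's little theorem gives $y_1^q - y_1 \equiv 0$. For a generic skew $Z = d(e_{11}-e_{22}) + c e_{12}$ one computes $Z^2 = d^2 I$, hence $Z^{2k+1} = d^{2k} Z$; since $q$ is odd this yields $Z^q = d^{q-1}Z$ and $Z^{q+1} = d^{q+1} I = Z^2$. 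In particular $Z^q - Z$ is always a scalar multiple of $e_{12}$, which immediately produces $(z_1^q-z_1)(z_2^q-z_2) \equiv 0$, $(z_1^q-z_1)z_2 + z_2(z_1^q-z_1) \equiv 0$ (since $e_{12}Z + Z e_{12} = 0$), and $[z_1,z_2](z_3^q-z_3) \equiv 0$ (because $[Z_1,Z_2] = 2(d_1 c_2 - c_1 d_2) e_{12}$ lies in $F e_{12}$ and $e_{12}^2 = 0$). The same observation handles $[z_1,z_2][z_3,z_4]$, and a direct expansion verifies $z_1 z_2 z_3 - z_3 z_2 z_1$.

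For the containment $Id(UT_2(F),s) \subseteq J$, I would use Proposition \ref{propositconseq} to reduce to polynomials of bounded degree in each variable, and then exploit the identities to reach a normal form. The commutativity relations $[y_i,y_j]$ and $[z_i,y_j]$ allow pushing all $y$'s to the left into a commuting factor $y_1^{s_1}\cdots y_n^{s_n}$, and the identity $y_1^q - y_1$ lets us take $0 \leq s_i \leq q-1$. For the $z$-factor, the identities $z_1 z_2 z_3 = z_3 z_2 z_1$ and $[z_i,z_j][z_k,z_l] = 0$ let us permute subwords of the $z$-part up to sign, while $z_i^{q+1} = z_i^2$ bounds each $\deg_{z_i} \leq q$. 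Then $(z_i^q - z_i)(z_j^q - z_j) \equiv 0$, $(z_i^q - z_i) z_j + z_j (z_i^q - z_i) \equiv 0$, and $[z_i,z_j](z_k^q - z_k) \equiv 0$ jointly force: at most one variable $z_i$ may occur with exponent exceeding $q-1$, and whenever a factor of the form $z_i^q - z_i$ appears (equivalently when the leading $z_j$ attains degree $q$), the remaining $z$-structure collapses to a prescribed shape. Tracking all cases should produce a candidate basis analogous to the one in Theorem \ref{teoremaprimeirainvolucao}.

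Linear independence of the normal-form monomials modulo $Id(UT_2(F),s)$ is then proved by evaluating on well-chosen specializations: substitute $y_i \mapsto a_i(e_{11}+e_{22})$ and $z_j \mapsto d_j(e_{11}-e_{22}) + c_j e_{12}$, and match coefficients as polynomial functions in the $a_i, d_j, c_j$ using Lemma \ref{lemapolcomutident}. The main obstacle is precisely the design of this canonical form for the $z$-part: over a finite field the exponents are not naively bounded by $q-1$ (because of $z^{q+1} = z^2$ rather than $z^q = z$), and the intricate interplay between the parity of $d$ in $Z^q = d^{q-1}Z$, the commutator identities, and the $(z^q-z)$-relations means one must argue carefully that every potentially surviving monomial in the span is accounted for exactly once in the proposed basis.
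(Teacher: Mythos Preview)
The paper does not actually prove this theorem; it is quoted verbatim from \cite[Theorem 6.15]{ronalddimas} and used as a black box. So there is no proof in the present paper to compare your attempt against.

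That said, your outline is the standard architecture for results of this type, and the verification step is correct: your computations of $Z^2 = d^2 I$, $Z^q - Z \in F e_{12}$, $[Z_1,Z_2] \in F e_{12}$, and $e_{12}Z + Ze_{12} = 0$ are accurate and dispatch all nine generators cleanly. The reduction strategy for the reverse inclusion (push $y$'s left, bound exponents, permute $z$-subwords via $z_1z_2z_3 = z_3z_2z_1$ and $[z_1,z_2][z_3,z_4]=0$, then control high $z$-powers with the remaining relations) is also the right plan and matches in spirit what the cited reference does.

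Where your proposal remains a sketch rather than a proof is exactly where you flag it: the design of the canonical form for the $z$-part. You correctly observe that $z^{q+1}=z^2$ does not cap individual degrees at $q-1$, and that the relations involving $z_i^q - z_i$ interact nontrivially with the commutator structure. But you have not written down the actual spanning set, nor shown it is linearly independent modulo $Id(UT_2(F),s)$. In \cite{ronalddimas} this is the bulk of the work: one must produce an explicit list of monomials (analogous to the $\Lambda_n$-indexed basis in Theorem~\ref{teoremaprimeirainvolucao}) and carry out the substitution-and-coefficient-matching argument via Lemma~\ref{lemapolcomutident}. Your proposal names the obstacle but does not overcome it, so as it stands it is a correct roadmap rather than a complete proof.
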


\begin{theorem}\label{teoremainvs}
Let $F$ be a field of char$(F) \neq 2$. The set of all $*$-central polynomials of $(UT_2(F), s)$ is
\[C(UT_2(F),s)= Id(UT_2(F), s)+ 
\langle y_1 \rangle^{TS(*)}.\]
\end{theorem}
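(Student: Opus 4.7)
The plan is to exploit the defining feature of the involution $s$: on $UT_2(F)$, the space of symmetric elements coincides with the center. Concretely, $(UT_2(F))^{+} = F(e_{11}+e_{22}) = Z(UT_2(F))$, a coincidence that does not occur for $\star$ and which makes the whole proof substantially shorter than in the previous section.

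First I would identify the $T(*)$-space $\langle y_1\rangle^{TS(*)}$ with the full space $F\langle Y\cup Z\rangle^{+}$ of symmetric polynomials. Indeed, by the description of $\langle \cdot \rangle^{TS(*)}$ recalled in the excerpt, $\langle y_1\rangle^{TS(*)}$ is the vector subspace spanned by $g$ as $g$ ranges over $F\langle Y\cup Z\rangle^{+}$, which is tautologically $F\langle Y\cup Z\rangle^{+}$ itself. For the inclusion $C(UT_2(F),s) \supseteq Id(UT_2(F),s) + \langle y_1\rangle^{TS(*)}$, it therefore suffices to note that for any $g \in F\langle Y\cup Z\rangle^{+}$ and any admissible substitution (symmetric for each $y_i$, skew-symmetric for each $z_j$), the value $g(Y_1,\ldots,Y_n,Z_1,\ldots,Z_m)$ lies in $(UT_2(F))^{+} = Z(UT_2(F))$.

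For the reverse inclusion, take $f \in C(UT_2(F),s)$ and decompose $f = f^{+}+f^{-}$ with $f^{\pm} \in F\langle Y\cup Z\rangle^{\pm}$. By the preceding paragraph $f^{+} \in C(UT_2(F),s)$, so $f^{-} = f - f^{+}$ is also $*$-central. But then, for any admissible inputs, $f^{-}(Y_1,\ldots,Y_n,Z_1,\ldots,Z_m)$ lies simultaneously in $(UT_2(F))^{-}$ and in $Z(UT_2(F)) = (UT_2(F))^{+}$; since $\mathrm{char}(F)\neq 2$ gives $(UT_2(F))^{+} \cap (UT_2(F))^{-} = 0$, this forces $f^{-} \in Id(UT_2(F),s)$. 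The decomposition $f = f^{+} + f^{-}$ then exhibits $f$ as an element of $\langle y_1\rangle^{TS(*)} + Id(UT_2(F),s)$, completing the proof.

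There is no real obstacle: once one has recognized the identification $(UT_2(F))^{+} = Z(UT_2(F))$ specific to the involution $s$, the two-directional argument is immediate from the decomposition $A = A^{+}\oplus A^{-}$ applied in both the free algebra $F\langle Y\cup Z\rangle$ and in $UT_2(F)$ (and is in the spirit of Lemma \ref{partesimetricadefeidentidade}, although one does not need to invoke it formally). Unlike the $\star$-case, there is no need to analyze higher-degree central polynomials or to introduce auxiliary generators such as $z_1 z_2$ or $y_1^{p}$, precisely because symmetric polynomials already exhaust $C(UT_2(F),s)$ modulo identities.
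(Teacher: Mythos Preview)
Your proof is correct and follows essentially the same route as the paper: both exploit that $(UT_2(F))^{+}=Z(UT_2(F))$ for the involution $s$, decompose $f=f^{+}+f^{-}$, and conclude that $f^{-}$ evaluates into $(UT_2(F))^{+}\cap(UT_2(F))^{-}=0$, hence $f^{-}\in Id(UT_2(F),s)$ while $f^{+}\in\langle y_1\rangle^{TS(*)}=F\langle Y\cup Z\rangle^{+}$. The only cosmetic difference is that you make the identification $\langle y_1\rangle^{TS(*)}=F\langle Y\cup Z\rangle^{+}$ explicit up front, whereas the paper uses it tacitly at the end.
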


\begin{proof}
Since  $(UT_2(F))^+=Z(UT_2(F))$ we have 
\[C(UT_2(F),s)\supseteq Id(UT_2(F), s)+ 
\langle y_1 \rangle^{TS(*)}.\]
Let $f=f(y_1,\ldots,y_n,z_1,\ldots,z_m) \in C(UT_2(F),s)$ and write
 \[f=f^{+}+f^{-}\]
 where $f^{+}\in F\langle Y\cup Z \rangle^{+}$ and $f^{-}\in F\langle Y\cup Z \rangle^{-}$. If
$Y_1,\ldots, Y_n \in UT_2(F)^+$ and $Z_1,\ldots, Z_m \in UT_2(F)^-$ then
$f^-(Y_1,\ldots,Y_n,Z_1,\ldots,Z_m) \in UT_2(F)^-$ and 
\begin{multline*}
f(Y_1,\ldots,Y_n,Z_1,\ldots,Z_m)-f^+(Y_1,\ldots,Y_n,Z_1,\ldots,Z_m)=\\
f^-(Y_1,\ldots,Y_n,Z_1,\ldots,Z_m) \in UT_2(F)^+.
\end{multline*}
Thus $f^-(Y_1,\ldots,Y_n,Z_1,\ldots,Z_m)=0$ and $f^-\in Id(UT_2(F), s)$. Since
$f^+ \in \langle y_1 \rangle^{TS(*)}$ it follows that $f\in Id(UT_2(F), s)+ 
\langle y_1 \rangle^{TS(*)}$. We prove that 
\[C(UT_2(F),s)= Id(UT_2(F), s)+ 
\langle y_1 \rangle^{TS(*)}\]
as desired.
\end{proof}

\section{Conclusion}

Let $F$ be a field of char$(F)\neq 2$, and let $*$ be an involution of the first kind on $UT_2(F)$. 
By Corollary \ref{corolarioequivalenciainvolucoe},
we have
\[C(UT_2(F),*)=C(UT_2(F),\star) \ \ \mbox{or} \ \ C(UT_2(F),*)=C(UT_2(F),s).\]
Thus, by Theorem \ref{teorinvsta1}, Theorem \ref{teorinvsta2}, Theorem \ref{teorinvstar3} and Theorem \ref{teoremainvs}
we have described $C(UT_2(F),*)$.

Since 
\[z_1z_2 \notin Id(UT_2(F),\star) \ \ \mbox{and} \ \ y_1 \notin Id(UT_2(F),s),\]
it follows that 
\[C(UT_2(F),*)\neq Id(UT_2(F),*)+F.\]
Moreover, by Theorem \ref{teoremaidentidaprimeirainv}, Theorem \ref{teoremaprimeirainvolucao}, Theorem 
\ref{identidadesut2fsinfinito}, Theorem \ref{identidadesut2fsfinito} it follows that there exists a finite 
set $S$ such that
\[Id(UT_2(F),*)=\langle S \rangle^{T(*)}.\]
Since 
\begin{eqnarray*}
\langle S \rangle^{T(*)}=&\langle \ y_{n+1}fy_{n+2}, \ y_{n+1}fz_{m+1}, \
z_{m+1}fy_{n+1}, \ z_{m+1}fz_{m+2} \ \ | \\
&f=f(y_1,\ldots,y_n,z_1,\ldots,z_m)\in S  \ \ \mbox{or}
\ \ f^* \in S  \ \rangle^{TS(*)}, 
\end{eqnarray*}
we prove the Theorem \ref{teoremafinitogeradoresdeute}.

\section*{Acknowledgments}
 
The first author was supported by Ph.D. grant from CAPES. 
The second author was partially supported by FAPESP grant No. 2014/09310-5,
and by CNPq grant No. 406401/2016-0.

 \end{document}